\newtheorem{thm}{Theorem.\nopagebreak}%[section]
\newtheorem{prop}[thm]{Proposition\nopagebreak}
\newtheorem{cor}[thm]{Corollary.\nopagebreak}
\def\eeq{\end{equation}} 
\def\lbeq#1{\begin{equation}\label{#1}} 
\def\eps{\varepsilon} 
\def\bealg#1{\begin{alg}{\bf #1}\index{Algorithm!#1}\nopagebreak}
\def\ealg{\end{alg}}
\def\D{\displaystyle}				% display style
\def\fct#1{\mathop{\rm #1}}	% e.g.,  \fct{tr}
\def\argmin{\fct{argmin}}
\def\wh{\widehat}
\def\wt{\widetilde}
\def\bary{\begin{array}}
\def\eary{\end{array}}
\def\st{\fct{s.t.~}}
\def\x{\mathbf{x}}
\def\<{\langle}
\def\>{\rangle}
\begin{document}

\title{Accelerated first-order methods for large-scale convex minimization 
}
\author{Masoud Ahookhosh}

%\authorrunning{Short form of author list} % if too long for running head

\institute{Faculty of Mathematics, University of Vienna,
Oskar-Morgenstern-Platz 1, 1090 Vienna, Austria\\
\email{masoud.ahookhosh@univie.ac.at}
}

%\date{Received: date / Accepted: date}
% The correct dates will be entered by the editor

\maketitle
\vspace{-4mm}
\begin{abstract}
This paper discusses several (sub)gradient methods attaining the optimal complexity for smooth problems with Lipschitz continuous gradients, nonsmooth problems with bounded variation of subgradients, weakly smooth problems with
H\"older continuous gradients. The proposed schemes are optimal for smooth strongly convex problems with Lipschitz continuous gradients and optimal up
to a logarithmic factor for nonsmooth problems with bounded variation of subgradients. More specifically, we propose two estimation sequences of the
objective and give two iterative schemes for each of them. In both cases, the
first scheme requires the smoothness parameter and the H\"older constant, while
the second scheme is parameter-free (except for the strong convexity parameter
which we set zero if it is not available) at the price of applying a
nonmonotone backtracking line search. A complexity analysis for all the
proposed schemes is given. Numerical results for some applications in sparse
optimization and machine learning are reported, which confirm the theoretical
foundations.

\keywords{Structured convex optimization \and Strong convexity \and Nonsmooth optimization \and First-order black-box oracle \and Optimal complexity \and High-dimensional data}
% \PACS{PACS code1 \and PACS code2 \and more}
 \subclass{90C25 \and 90C60 \and 49M37 \and 65K05}
\end{abstract}

%%%%%%%%%%%%%%%%%%%%%%%%%%%%%%%%%%%%%%%%%%%%%%%%%%%%%%%%%%%%%%%%%%%%%%%%%%%%%%%
%%%%%%%%%%%%%%%%%%%%%%%%%%%%%%%%%%%%%%%%%%%%%%%%%%%%%%%%%%%%%%%%%%%%%%%%%%%%%%%
\section{Introduction} \label{e.intro}
Let $V$ be a finite-dimensional linear vector space with the dual space $V^*$ as the space of all linear function on
$V$. We assume 
$f:V\rightarrow\overline{\mathbb{R}}:=\mathbb{R}\cup\{+\infty\}$ 
is a  proper, $\mu_f$-strongly convex ($\mu_f>0$ for strongly convex case and $\mu_f=0$ for convex case), and lower semicontinuous function satisfying
\lbeq{e.holder}
\|\nabla f(x)-\nabla f(y)\|_* \leq L_{\nu} \|x-y\|^{\nu} ~~~\forall~ x,y 
\in V,
\eeq
where $\nabla f(x)$ denotes the gradient of $f$ at $x$ for $\nu \in (0,1]$ or
any subgradient of $f$ at $x$ ($\nabla f(x) \in \partial f(x)$) for $v=0$. Let the function $\psi: V \rightarrow \overline{\mathbb{R}}$ be simple, proper, $\mu_p$-strongly convex ($\mu_p\geq 0$), and lower semicontinuous function. We consider the structured convex minimization problem 
\lbeq{e.gfun}
\begin{array}{ll}
\min          &~ h(x):=f(x)+\psi(x)\\
\mathrm{s.t.} &~ x \in C,
\end{array}
\eeq
where $C$ is a simple, nonempty, closed, and convex set. By (\ref{e.holder}),
we have  $f\in \mathcal{C}_{\mu_f, L_\nu}^{1,\nu}(V)$, i.e., $f$ can be smooth
with Lipschitz continuous gradients ($\nu=1$), weakly smooth problems with
H\"older continuous gradients ($\nu \in {]0,1[}$), or nonsmooth with bounded
variation of subgradients ($\nu=0$). Hence the objective $h$ is $\mu$-strongly convex
with $\mu:=\mu_f+\mu_p\geq 0$. We assume that the first-order black-box oracle
of the objective $h$ is available.

%%%%%%%%%%%%%%%%%%%%%%%%%%%%%%%%%%%%%%%%%%%%%%%%%%%%%%%%%%%%%%%%%%%%%%%%%%%%%%%
\subsection{{\bf Motivation \& history}} \label{e.moth}
Over the past few decades, due to the dramatic increase in the size of data for many applications, first-order methods have been received much attention thanks to their simple structures and low memory requirements. The efficiency of first-order methods can be poor (a large number of function values and subgradients is needed) for solving the general convex problems if the structure of the
problem is not available. As a result, to develop practically appealing
schemes, it is necessary to make an additional restriction on problem classes.
In particular, developing efficient methods for
solving large-scale convex optimization problems is possible if the underlying objective has a suitable structure and the domain is simple enough. 
Convexity and level of smoothness are two important factors playing key roles in construction of efficient schemes for such structured optimization problems. 

Let $x^*$ be an optimizer of (\ref{e.gfun}) and $x_k$ be an approximate
solution given by a first-order method. We call $x_k$ an $\eps$-solution of
(\ref{e.gfun}) if $h(x_k)-h(x^*)\leq \eps$, for a prescribed accuracy 
parameter $\eps>0$. In 1983, {\sc Nemirovski \& Yudin} in \cite{NemY} derived
optimal worst-case complexities for first-order methods to achieve an
$\eps$-solution for several classes of convex problems (see Table  \ref{t.complexity}). If a first-order scheme attains the worst-case complexity
of a class of problems, it is called optimal. A special feature of these
methods is that the corresponding complexity does not depend explicitly on the
problem dimension. From practical point of view, studying the effect of an uniform boundedness of the complexity is very attractive and such methods are
highly recommended when the prescribed accuracy $\eps$ is not too small,
whereas the dimension of problem is considerably large.

\vspace{-4mm}
%%%%%%%%%%%%%%%%%%%
\begin{table}[htbp]
\caption[Convex optimization problems and their complexity]
{List of the best known complexities of first-order methods for several classes
of problems with respect to levels of smoothness and convexity (cf. \cite{NemN,NemY,NesB})}
\vspace{-2mm}
\label{t.complexity}
\begin{center}\footnotesize
\renewcommand{\arraystretch}{1.3}
\begin{tabular}{|l|c|c|}\hline
\multicolumn{1}{|l|}{{\bf Problem's class}} 
& \multicolumn{1}{c|}{{\bf Convex problems}}
& \multicolumn{1}{c|}{{\bf Strongly convex problems}}\\ 
\hline
Smooth problems ($\nu=1$) 
& $\mathcal{O}(\eps^{-1/2})$ & $\mathcal{O}(\ln (1/\eps))$\\
\hline
Weakly smooth problems ($\nu \in {]0,1[}$)
& $\mathcal{O}(\eps^{-2/(1+3\nu)})$ & 
$\leq \mathcal{O}(\eps^{-{(1-\nu)}/(1+3\nu)}) \ln \left(1/\eps^{{(3+\nu)}/(1+3\nu)}\right)$\\
\hline
Nonsmooth problems  ($\nu=0$)           
& $\mathcal{O}(\eps^{-2})$ & $\mathcal{O}(1/\eps)$\\
\hline
\end{tabular}
\end{center}
\end{table}
%%%%%%%%%%% 

\vspace{-5mm}
In \cite{NemY}, it was proved that subgradient, subgradient projection, and
mirror descent methods possess the optimal complexity $\mathcal{O}(\eps^{-2})$ for Lipschitz continuous nonsmooth problems, where the 
mirror decent method is a generalization of the subgradient projection 
method, cf. \cite{BecT1}. In 1983, the pioneering optimal method by
{\sc Nesterov} \cite{Nes83} was introduced for smooth problems
with Lipschitz continuous gradients. He later in \cite{NesB} proposed
some more gradient methods for this class of problems. 
{\sc Nesterov} in \cite{NesC} proposed a gradient-type method for 
minimizing the composite problem (\ref{e.gfun}) with the 
complexity  $\mathcal{O}(\varepsilon^{-1/2})$, where $f$ has 
Lipschitz continuous gradients and $\psi$ is a simple convex function.
Since 1983 many researchers have developed the idea of optimal schemes, see,
e.g., {\sc Auslander \& Teboulle} \cite{AusT}, {\sc Baes} 
\cite{Bae}, {\sc Baes \& B\"urgisser} \cite{BaeB}, {\sc Beck \& Teboulle} 
\cite{BecT2}, {\sc Chen} et al. \cite{CheLO1,CheLO2}, 
{\sc Gonzaga} et al. \cite{GonK,GonKR}, {\sc Juditsky \& Nesterov} \cite{JudN},
{\sc Lan} \cite{LanS,LanB}, {\sc Lan} et al. \cite{LanLM}, {\sc Nesterov}
\cite{NesB}, {\sc Neumaier} \cite{NeuO} and  {\sc Tseng} \cite{Tse}.
Computational experiments for problems of the form (\ref{e.gfun}) 
have shown that Nesterov-type optimal first-order methods are substantially 
superior to the gradient descent and subgradient methods, cf. {\sc Ahookhosh} \cite{Aho} and {\sc Becker} et al. \cite{BecCG}.
{\sc Nesterov} also in \cite{NesS,NesE} proposed some smoothing 
methods for a class of structured nonsmooth problems attaining the complexity
$\mathcal{O}(\eps^{-1/2})$. 

In 1985, the first optimal method for weakly smooth objectives with H\"older
continuous gradients was given by {\sc Nemirovski \& Nesterov} \cite{NemN}; however, to implement this scheme, one needs to know about $\nu$, $L_{\nu}$, an estimate of the distance of starting point to the optimizer, and the total
number of iterations, which makes the algorithm to some extent impractical. 
{\sc Lan} \cite{LanB} proposed an accelerated bundle-level method attaining the
optimal complexity for all the convex classes considered. This scheme does not
need to know about the global parameters such as Lipschitz or H\"older
constants and the level of smoothness parameter $\nu$; on the other hand, as
the scheme proceeds, the associated auxiliary problem becomes more difficult to solve, i.e., even the limited memory version of this scheme involves solving a computationally costly auxiliary problems. {\sc Devolder} et al. 
\cite{DevGN, DevGN1} also proposed some 
first-order methods for minimization of objectives with H\"older continuous
gradients in inexact oracle. The proposed fast gradient method attains the
optimal complexities for convex problems; however, for implementation it needs to know about $\nu$,
$L_{\nu}$, an estimate of the distance of starting point to the optimizer, and
the total number of iterations. Recently, {\sc Nesterov} \cite{NesU} proposed a
so-called universal gradient method for convex problem classes attaining the
optimal complexities and requiring no global parameters at the price of
applying a backtracking line search. More recently, {\sc Nesterov} proposed a
conditional gradient method involving a simple subproblem, which possesses the
complexity $\mathcal{O}(\eps^{-1/2\nu})$. Moreover,
{\sc Ghadimi} \cite{Gha} and {\sc Ghadimi} et al. \cite{GhaLZ} developed some
first-order methods for unconstrained nonconvex problems of the form
(\ref{e.gfun}) where $f$ is an arbitrary nonconvex function. 

\vspace{-5mm}
%%%%%%%%%%%%%%%%%%%%%%%%%%%%%%%%%%%%%%%%%%%%%%%%%%%%%%%%%%%%%%%%%%%%%%%%%%%%%%%
\subsection{{\bf Contribution}} \label{s.cont}
This paper describes four accelerated (sub)gradient algorithms (ASGA) attaining optimal
complexities for solving several classes of convex optimization problems with
high-dimensional data (see Table \ref{t.complexity}). 

We firstly construct an estimation sequence using available local or global information of $f$ and then give two iterative schemes for solving (\ref{e.gfun}). The first scheme (ASGA-1) requires the level of smoothness $\nu$ and the H\"older constant $L_{\nu}$. Afterwards, we develop a parameter-free variant of this scheme (ASGA-2) that is not requiring $\nu$ and $L_{\nu}$ at the price of applying a backtracking line search. Apart from an initial point
$x_0$ and the strong convexity parameter $\mu$ ($\mu=0$ if it is not available),
ASGA-2 requires no more parameters. We here emphasize that parameter-free
methods are useful for black-box optimization when no information about $\nu$
and $L_\nu$ is available.

We secondly generalize the estimation sequence of Nesterov \cite{NesU}, by 
adding a quadratic term including the strong convexity information of $h$ and
develop two (sub)gradient methods. The first one
(ASGA-3) needs the smoothness parameters $\nu$ and $L_{\nu}$; on the other
hand, the second one (ASGA-4) is parameter-free at the price of carrying out a
backtracking line search. 

The estimation sequence used in ASGA-1 and ASGA-2 shares some similarities with
the estimation sequence used in ASGA-3 and ASGA-4; however, the iteration sequences used in construction of them are different. Whereas ASGA-1 requires
a single solution of an auxiliary problem, ASGA-3 needs to solve two auxiliary
problems. ASGA-2 requires at least a single solution of an auxiliary problem; in contrast, ASGA-4 needs to solve at least two auxiliary problems. As apposed to ASGA-1 and ASGA-3, the schemes NESUN, ASGA-2, and ASGA-4 are parameter-free (except for strong convexity parameter $\mu$ which we set $\mu=0$ if it is not available) by applying a backtracking line search. NESUN treats strongly convex problems by the same way as convex ones; on the other hand, ASGA-1, ASGA-2,
ASGA-3, and ASGA-4 possess a much better complexity for strongly convex problems.
It is worth mentioning that, for $\mu=0$, ASGA-4 almost reduces to NESUN except
for some parameters.

Apart from some constants, ASGA-1, ASGA-2, ASGA-3, and ASGA-4 possess the same
complexity for finding an $\eps$-solution of the problem (\ref{e.gfun}), i.e.,
\[
\mathcal{O}\left( \mu^{-\frac{1+\nu}{1+3\nu}} L_\nu^{\frac{2}{1+3\nu}}
\eps^{-\frac{1-\nu}{1+3\nu}} \ln(\eps^{-\frac{2}{1+\nu}}) \right),
\]
for $\mu>0$, and 
\[
\mathcal{O}\left(\eps^{-\frac{2}{1+3\nu}} \right),
\]
for $\mu=0$. Therefore, ASGA-1, ASGA-2, ASGA-3, and ASGA-4 are optimal for
smooth, weakly smooth, and nonsmooth convex objectives. On the other hand,
they are optimal for smooth strongly convex problems and optimal
up to a logarithmic factor for nonsmooth strongly convex objectives.
For weakly smooth strongly convex problems, they attain a complexity better
than the known complexity for weakly smooth convex problems.

We finally study the solution of auxiliary problems appearing in ASGA-1,
ASGA-2, ASGA-3, and ASGA-4. The considered auxiliary problems are strongly convex;
however, finding their unique solutions efficiently is highly related to the
structure involved in $\psi$ and $C$. It is shown that these auxiliary problems
can be solved either in a closed form or by a simple iterative scheme for
several functions $\psi$ and domain $C$ appearing in applications. Some
computational experiments show that the performance of ASGA-2, ASGA-4, and
NESUN are sensitive to large regularization parameters and small $\eps>0$ (because of the associated line searches); whereas, ASGA-1
and ASGA-3 are less sensitive. In addition, there are many applications with
available smoothness parameters $\nu$ and $L_\nu$ motivating the quest for
designing ASGA-1 and ASGA-3. It is worth noting that the proposed schemes are able to handle sum of nonsmooth functions, where they behave much better than the traditional subgradient methods in spite of attaining the same complexity
(see Section \ref{s.svm}). Some encouraging numerical results are reported confirming the achieved theoretical foundations.
 
The remainder of the paper is organized as follows. In the next section we 
give two single-subproblem accelerated (sub)gradient schemes with their complexity analysis. In
Section 3 we generalize the estimation sequence of Nesterov \cite{NesU} and
propose two double-subproblem accelerated (sub)gradient schemes and the related complexity
analysis. In Section 4 we discuss the solution of the auxiliary problems
appearing in the proposed methods. In Section 5 we reports some numerical
experiments and comparisons showing the performance of the proposed methods.
Finally, some conclusions are delivered in Section 6.

%%%%%%%%%%%%%%%%%%%%%%%%%%%%%%%%%%%%%%%%%%%%%%%%%%%%%%%%%%%%%%%%%%%%%%%%%%%%%%%
\subsection{{\bf Preliminaries \& notation}} \label{s.pren}
Let the primal space $V$ be endowed with a norm $\|\cdot\|$, and let the associated dual norm be defined by 
\begin{equation*}
\|s\|_* = \max_{x \in V} \{\langle s,x \rangle ~|~ \|x\| \leq 1\},
\end{equation*}
where $\langle s,x \rangle$ denotes the value of the linear function $s \in V^*$ at $x \in V$. If $V = \mathbb{R}^n$, then, for $1 \leq p \leq \infty$, 
\[
\|x\|_p = \left(\sum_{i=1}^n |x_i|^p\right)^{1/p}.
\]
For a function $\wt f: V \rightarrow \overline{\mathbb{R}} = \mathbb{R}\cup\{\pm \infty\}$, 
$
\mathrm{dom}~ \wt f = \{ x \in V ~|~ \wt f(x) < +\infty\}
$
denotes its effective domain, and $\wt f$ is called proper if $\mathrm{dom}~ \wt f \neq \emptyset$ and $\wt f(x) > -\infty$ for all $x \in V$. Let $C$ be a subset of $V$. In particular, if $C$ is a box, we denote it by $\x = [\underline{x},\overline{x}]$, where in which $\underline{x}$ and $\overline{x}$ are the vectors of lower and upper bounds on the components of x, respectively. The vector $\nabla \wt f(x) \in V^* $ is called a subgradient of $\wt f$ at $x$ if $\wt f(x) \in \mathbb{R}$ and
\[
\wt f(y) \geq \wt f(x) + \langle \nabla \wt f(x),y-x \rangle~~~ \forall y \in V.
\] 
The set of all subgradients is called the subdifferential of $\wt f$ at $x$, 
which is denoted by $\partial \wt f(x)$. 

If $\wt f$ is nonsmooth and convex, then Fermat-type optimality condition for the nonsmooth convex optimization problem
\[
\begin{array}{ll}
\min          &~ \wt f(x)\\
\mathrm{s.t.} &~ x \in C
\end{array}
\]
is given by
\lbeq{e.fopt}
0 \in \partial \wt f(x) + N_C(x),
\eeq
where $N_C(x)$ is the normal cone of $C$ at $x$, i.e.,
\lbeq{e.normc}
N_C(x) := \{p \in V \mid \langle p, x-z \rangle \geq 0 ~~ \forall z \in C\}.
\eeq
For $C\subseteq V$ and $y\in V$, the orthogonal projection is given by
\lbeq{e.pro}
\mathrm{P}_{C}(y) := \argmin_{x \in C}~ \frac{1}{2} \|x-y\|^2.
\eeq
The proximal-like operator $\mathrm{prox}_{\lambda \wt f}^C(y)$ is the unique optimizer of the optimization problem
\lbeq{e.pro}
\mathrm{prox}_{\lambda \wt f}^C(y) := \argmin_{x \in C}~ \frac{1}{2} \|x-y\|_2^2 + \lambda \wt f(x),
\eeq
where $\lambda>0$. From (\ref{e.fopt}), the first-order optimality condition for the problem (\ref{e.pro}) is given by
\lbeq{e.opts1}
0 \in x-y + \lambda \partial \wt f(x) + N_C(x).
\eeq
If $C=V$, then (\ref{e.opts1}) is simplified to 
\lbeq{e.opts}
0 \in x-y + \lambda \partial \wt f(x),
\eeq
giving the classical proximity operator.

Let $\omega:V\rightarrow\mathbb{R}$ be a differentiable $1$-strongly convex
function, i.e.,
\begin{equation}\label{e.stro}
\omega(y)\geq \omega(x)+\langle \nabla \omega(x), y-x\rangle+
\frac{1}{2}\|y-x\|^2.
\end{equation}
It is assumed that $\omega(x)$ attains its unique minimizer at $x_0$ and
$\omega(x_0)=0$. The function $\omega$ satisfied these conditions is called a prox-function. The corresponding Bregman distance is defined by
\begin{equation}\label{e.breg}
B_\omega(x,y) := \omega(x)-\omega(y)-\langle\nabla \omega(y), x-y\rangle,
\end{equation}
where, from (\ref{e.stro}), it is straightforward to show 
\begin{equation}\label{e.inbreg}
B_\omega(x,y) \geq \frac{1}{2}\|x-y\|^2.
\end{equation}

%%%%%%%%%%%%%%%%%%%%%%%%%%%%%%%%%%%%%%%%%%%%%%%%%%%%%%%%%%%%%%%%%%%%%%%%%%%%%%%
%%%%%%%%%%%%%%%%%%%%%%%%%%%%%%%%%%%%%%%%%%%%%%%%%%%%%%%%%%%%%%%%%%%%%%%%%%%%%%%
\section{Single-subproblem accelerated (sub)gradient methods} \label{e.moth}
In this section we first give two schemes for solving structured problems of
the form (\ref{e.gfun}) attaining the optimal complexity for smooth,
nonsmooth, weakly smooth, and smooth strongly problems. These schemes are
optimal up to a logarithmic factor for nonsmooth strongly convex objectives. We then investigate the complexity analysis of these schemes. 

To guarantee the existence of a solution of a problem of the form
(\ref{e.gfun}), we assume:\bigskip\\
{\bf (H1)} The upper level set $N_h(x_0) := \{x \in C \mid h(x) \leq h(x_0)\}$
is bounded, for a starting point $x_0\in C$.\bigskip\\
Since $h$ is convex and $N_h(x_0)$ is closed, (H1) implies that $N_h(x_0)$ is
convex and compact. It therefore follows from the continuity and properness of the objective function $h$ that it attains its global minimizer on $N_h(x_0)$. This guarantees that there is at least one minimizer $x^*$. 

Motivated by {\sc Nesterov} \cite{NesU}, we define  
\lbeq{e.holc}
L_{\nu} := \sup_{x,y\in C,~ x \neq y}\frac{\| \nabla f(x)-\nabla f(y)\|}{\|x-y\|^{\nu}} ,
\eeq
for the level of smoothness parameter $\nu \in [0,1]$. If $L_{\nu}<+\infty$, then (\ref{e.holc}) implies that (\ref{e.holder}) holds resulting to
\lbeq{}
f(x) \leq f(y)+\langle \nabla f(y),x-y\rangle+\frac{L_{\nu}}{1+\nu}\|x-y\|^{1+\nu}
~~~\forall x,y \in C.
\eeq

The following proposition is crucial for constructing our accelerated
(sub)gradient schemes, where for the sake of simplicity for $\nu=1$, we suppose $0^0=1$ in which $\wt L=L_\nu$ is a Lipschitz constant.

%%%%%%%%%%%%%%%%%%%%%%%%%%%
\begin{prop}\label{p.norm2}
\cite[Lemma 2]{NesU} Let function $f$ satisfies the condition (\ref{e.holder}). Then, for $\delta>0$
and
\begin{equation}\label{e.lnu}
\widehat{L} \geq \left(\frac{1-\nu}{\delta(1+\nu)} \right)^{\frac{1-\nu}{1+\nu}} L_{\nu}^{\frac{2}{1+\nu}}:=\widetilde{L},
\end{equation}
we have
\begin{equation}\label{e.norm2}
f(x) \leq f(y)+\langle \nabla f(y),x-y\rangle+\frac{1}{2}\widehat{L}\|x-y\|^2
+\frac{\delta}{2}~~~ x, y \in C.
\end{equation}
\end{prop}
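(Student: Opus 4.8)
The plan is to reduce the two-point inequality (\ref{e.norm2}) to a scalar inequality in $t:=\|x-y\|$ and then close it with Young's inequality. Starting from the consequence of (\ref{e.holder}) recorded immediately before the proposition, namely
\[
f(x)\leq f(y)+\langle \nabla f(y),x-y\rangle+\frac{L_{\nu}}{1+\nu}\,t^{1+\nu},
\]
it is enough to show that, for every $t\geq 0$,
\[
\frac{L_{\nu}}{1+\nu}\,t^{1+\nu}\;\leq\;\frac{1}{2}\widehat{L}\,t^{2}+\frac{\delta}{2}
\qquad\text{whenever }\widehat{L}\geq\widetilde{L}.
\]

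First I would dispose of the endpoint $\nu=1$: under the convention $0^{0}=1$ one has $\widetilde{L}=L_{1}$, and since $\widehat{L}\geq L_{1}$ and $\delta>0$ the scalar inequality is trivial. For $\nu\in[0,1)$ I would write $t^{1+\nu}=(t^{2})^{\frac{1+\nu}{2}}\cdot 1^{\frac{1-\nu}{2}}$ and apply the weighted arithmetic--geometric mean (Young) inequality $a^{\theta}b^{1-\theta}\leq\theta a+(1-\theta)b$ with $\theta=\frac{1+\nu}{2}\in(0,1)$. Taking $a=c\,t^{2}$ and $b=c^{-\frac{1+\nu}{1-\nu}}$ with a free scaling parameter $c>0$ gives $t^{1+\nu}\leq\frac{1+\nu}{2}\,c\,t^{2}+\frac{1-\nu}{2}\,c^{-\frac{1+\nu}{1-\nu}}$, and hence
\[
\frac{L_{\nu}}{1+\nu}\,t^{1+\nu}\;\leq\;\frac{L_{\nu}c}{2}\,t^{2}+\frac{L_{\nu}(1-\nu)}{2(1+\nu)}\,c^{-\frac{1+\nu}{1-\nu}}.
\]

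The final step is to calibrate $c$. Choosing $c=\widehat{L}/L_{\nu}$ makes the quadratic coefficient exactly $\widehat{L}/2$, and the remaining constant satisfies $\frac{L_{\nu}(1-\nu)}{2(1+\nu)}(\widehat{L}/L_{\nu})^{-\frac{1+\nu}{1-\nu}}\leq\frac{\delta}{2}$ precisely when $\widehat{L}^{\frac{1+\nu}{1-\nu}}\geq\frac{1-\nu}{\delta(1+\nu)}L_{\nu}^{\frac{2}{1-\nu}}$; using the identity $1+\frac{1+\nu}{1-\nu}=\frac{2}{1-\nu}$ and raising both sides to the power $\frac{1-\nu}{1+\nu}$, this is exactly the hypothesis (\ref{e.lnu}). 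Since the residual constant is decreasing in $\widehat{L}$, the bound $\widehat{L}\geq\widetilde{L}$ suffices. The only places that require care are this exponent bookkeeping and the degenerate cases $\nu\in\{0,1\}$ (recalling that for $\nu=0$ the symbol $\nabla f(y)$ denotes an arbitrary subgradient in $\partial f(y)$); beyond that the argument is entirely routine.
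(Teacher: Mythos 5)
Your argument is correct: the reduction to the scalar inequality $\frac{L_{\nu}}{1+\nu}t^{1+\nu}\leq\frac{1}{2}\widehat{L}t^{2}+\frac{\delta}{2}$, the weighted AM--GM step, and the exponent bookkeeping (using $1+\frac{1+\nu}{1-\nu}=\frac{2}{1-\nu}$) all check out, and the endpoint cases $\nu\in\{0,1\}$ are handled properly. The paper itself gives no proof, citing Nesterov's Lemma 2 in \cite{NesU}; your calibrated Young's inequality is equivalent to the standard argument there, which maximizes $\frac{L_{\nu}}{1+\nu}t^{1+\nu}-\frac{1}{2}\widehat{L}t^{2}$ over $t\geq 0$ and bounds the maximum by $\frac{\delta}{2}$, so there is nothing substantively different to compare.
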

%%%%%%%%%%

The idea is to generate a sequence of estimation functions $\{\phi_k(x)\}_{k \geq 0}$ of $h$ in such a way that, at each iteration $k\geq 0$, the inequality
\begin{equation}\label{e.ineq}
S_k \left(h(x_k)-\frac{\eps}{2} \right) \leq \phi_k^*:=\min_{x\in C}~ \phi_k(x)
\end{equation}
holds for $x_k\in V$, where $S_k$ is a scaling parameter. We consider the
sequence of scaling parameters $\{S_k\}_{k\geq 0}$, which is generated by
\begin{equation}\label{e.sk}
S_k:=S_{k-1}+s_k,
\end{equation}
where $S_0=0$ and $s_k > 0$. We consider the estimation sequence 
\begin{equation}\label{e.phik}
\begin{array}{l}
\phi_{k+1}(x):= \left\{
\begin{array}{ll}
B_\omega(x,x_0) &~~ \mathrm{if}~k=0,\vspace{3mm}\\
\phi_k(x)+ s_{k+1}\left[q_{k+1}(x,y_k)+\psi(x)\right] &~~ \mathrm{if}~k\in \mathbb{N},\\
\end{array}
\right.\vspace{3mm}\\
\D q_{k+1}(x,y_k):=f(y_k)+\langle \nabla f(y_k), x-y_k \rangle+\frac{\mu_f}{2}
\|x-y_k\|^2.
\end{array}
\end{equation}
Let us define $\{z_k\}_{k\geq 0}$ as the sequence of minimizers of the  estimation sequence $\{\phi_k\}_{k\geq 0}$, i.e.,
\begin{equation}\label{e.zk}
z_{k+1} := \D \argmin_{x\in C}~ \phi_{k+1}(x).
\end{equation}

The next result is crucial for the complexity analysis and for providing a stopping  criterion for schemes will be presented in Section \ref{s.algor}.

%%%%%%%%%%%%%%%%%%%%%%%%%%%%
\begin{prop}\label{p.inphik}
Let the sequence $\{\phi_k\}_{k\geq 0}$ be generated by (\ref{e.phik}). Then
\begin{equation}\label{e.inphifk}
\phi_k(x) \leq S_k~ h(x) + B_\omega(x,x_0)~~~ \forall k\geq 0.
\end{equation}
If in addition (\ref{e.ineq}) holds, then
\begin{equation}\label{e.comp}
h(x_k)-h(x^*) \leq \frac{B_\omega(x^*,x_0)}{S_k}+\frac{\eps}{2}.
\end{equation}
\end{prop}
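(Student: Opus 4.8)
The plan is to prove the majorization bound (\ref{e.inphifk}) by induction on $k$, and then to read off the complexity estimate (\ref{e.comp}) by a short chain of inequalities that couples (\ref{e.inphifk}), evaluated at a minimizer $x^*$, with the standing hypothesis (\ref{e.ineq}).

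For the induction, the base index is immediate: there $S_0=0$ and $\phi_0(x)=B_\omega(x,x_0)$, so (\ref{e.inphifk}) holds with equality (and in any case $B_\omega(x,x_0)\ge 0$ by (\ref{e.inbreg})). Assume now $\phi_k(x)\le S_k\,h(x)+B_\omega(x,x_0)$ for every $x\in C$. Adding $s_{k+1}\big[q_{k+1}(x,y_k)+\psi(x)\big]$ to both sides and using the recursion in (\ref{e.phik}) gives
\[
\phi_{k+1}(x)\ \le\ S_k\,h(x)+B_\omega(x,x_0)+s_{k+1}\big[q_{k+1}(x,y_k)+\psi(x)\big].
\]
The crucial observation is that $q_{k+1}(\cdot,y_k)$ is a global minorant of $f$ on $C$: since $f$ is $\mu_f$-strongly convex and $\nabla f(y_k)\in\partial f(y_k)$ (a subgradient when $\nu=0$), the strong-convexity inequality reads
\[
f(x)\ \ge\ f(y_k)+\langle\nabla f(y_k),x-y_k\rangle+\frac{\mu_f}{2}\|x-y_k\|^2\ =\ q_{k+1}(x,y_k),\qquad x\in C,
\]
hence $q_{k+1}(x,y_k)+\psi(x)\le f(x)+\psi(x)=h(x)$. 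Substituting this and using $S_k+s_{k+1}=S_{k+1}$ from (\ref{e.sk}) yields $\phi_{k+1}(x)\le S_{k+1}\,h(x)+B_\omega(x,x_0)$, which closes the induction and proves (\ref{e.inphifk}).

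For the second assertion, evaluate (\ref{e.inphifk}) at $x=x^*$ and combine it with $\phi_k^*=\min_{x\in C}\phi_k(x)\le\phi_k(x^*)$ and the hypothesis (\ref{e.ineq}):
\[
S_k\Big(h(x_k)-\frac{\eps}{2}\Big)\ \le\ \phi_k^*\ \le\ \phi_k(x^*)\ \le\ S_k\,h(x^*)+B_\omega(x^*,x_0).
\]
Since $S_k=\sum_{i=1}^{k}s_i>0$ for $k\ge 1$ (with $s_i>0$ and $S_0=0$), dividing through by $S_k$ and rearranging produces exactly (\ref{e.comp}).

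There is no real obstacle here --- the argument is a textbook estimation-sequence induction. The two points that need care are: (i) keeping the recursions for $\phi_k$ and $S_k$ synchronized, in particular using $S_0=0$ so that the base case is an equality and the telescoping $S_k+s_{k+1}=S_{k+1}$ matches the addition of exactly one more linear(-quadratic) model; and (ii) invoking $\mu_f$-strong convexity of $f$ rather than mere convexity, so that the quadratic piece $\frac{\mu_f}{2}\|x-y_k\|^2$ built into $q_{k+1}$ is precisely what makes $q_{k+1}(\cdot,y_k)$ minorize $f$ --- whence $q_{k+1}(\cdot,y_k)+\psi$ minorizes $h$, which is the only place the additive structure $h=f+\psi$ is used.
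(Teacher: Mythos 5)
Your proof is correct and follows essentially the same route as the paper's: induction on $k$ using the $\mu_f$-strong-convexity minorization $q_{k+1}(\cdot,y_k)\le f$ for (\ref{e.inphifk}), then the chain $S_k(h(x_k)-\eps/2)\le\phi_k^*\le\phi_k(x^*)\le S_k h(x^*)+B_\omega(x^*,x_0)$ for (\ref{e.comp}). If anything, your evaluation of $\phi_k$ at $x^*$ is slightly cleaner than the paper's step, which writes $\min_{x\in C}(S_k h(x)+B_\omega(x,x_0))$ as equal to $S_k h(x^*)+B_\omega(x^*,x_0)$ when only the inequality $\le$ is needed (and true).
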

%%%%%%%%%%

%%%%%%%%%%%%%
\begin{proof}
The proof is given by induction on $k$. Since $S_0=0$ and 
$\phi_0(x)=B_\omega(x,x_0)$, the result is valid for $k=0$. We assume it is
true for $k$ and prove it for $k+1$. By this assumption and (\ref{e.phik}), 
we get
\[
\begin{split}
\phi_{k+1}(x)&= \phi_k(x)+ s_{k+1}\left(f(y_k)+\langle \nabla f(y_k), x-y_k \rangle+\frac{\mu_f}{2} \|x-y_k\|^2+\psi(x)\right)\\
&\leq S_k h(x) + B_\omega(x,x_0)+ s_{k+1}\left(f(y_k)+\langle \nabla f(y_k), x-y_k \rangle+\frac{\mu_f}{2} \|x-y_k\|^2+\psi(x)\right)\\
&\leq S_k h(x) + B_\omega(x,x_0)+ s_{k+1} h(x) = S_{k+1} h(x) + B_\omega(x,x_0).
\end{split}
\]

From (\ref{e.ineq}) and (\ref{e.inphifk}), we obtain
\[
h(x_k) \leq \frac{\eps}{2}+\frac{1}{S_k} \phi_k^* \leq 
\frac{\eps}{2}+\frac{1}{S_k} \min_{x\in C} (S_k~ h(x) + B_\omega(x,x_0)) =
\frac{\eps}{2}+h(x^*)+\frac{B_\omega(x^*,x_0)}{S_k},
\]
completing the proof. \qed
\end{proof}
%%%%%%%%%%%

%%%%%%%%%%%%%%%%%%%%%%%%%%%%%%%%%%%%%%%%%%%%%%%%%%%%%%%%%%%%%%%%%%%%%%%%%%%%%%%
\subsection{{\bf Novel single-subproblem algorithms}} \label{s.algor}
We here give two new algorithms using the estimation sequence (\ref{e.phik})
and investigate the related convergence analysis.

The following result shows that how (\ref{e.phik}) can be used to construct 
the sequence $\{x_k\}_{k\geq 0}$ guaranteeing the condition (\ref{e.ineq}).

%%%%%%%%%%%%%%%%%%%%%%%%%
\begin{thm}\label{t.alg1}
Let $f$ satisfies (\ref{e.holder}) with $L_{\nu} < +\infty$ and
$\alpha_k:=(s_{k+1}/S_{k+1}) \in {]0,1]}$ for $s_{k+1}>0$. Let also the sequence $\{z_k\}_{k\geq 0}$ be generated by (\ref{e.zk}),
\begin{equation}\label{e.yk1}
y_k := (1-\alpha_k)x_k+\alpha_k z_k,
\end{equation}
\begin{equation}\label{e.xk1}
x_{k+1} := (1-\alpha_k)x_k+\alpha_k z_{k+1},
\end{equation}
and (\ref{e.norm2}) holds for $x=x_{k+1}$, $y=y_k$, $\delta:=\eps \alpha_k$ with $\eps>0$. We set
\begin{equation}\label{e.lhat1}
\widehat{L}_{k+1}:=\left( \frac{1-\nu}{\eps \alpha_k(1+\nu)} \right)^{\frac{1-\nu}{1+\nu}} L_{\nu}^{\frac{2}{1+\nu}}.
\end{equation}
Then we have
\begin{equation}\label{e.inqks}
\phi_{k+1}^* \geq S_{k+1} \left(h(x_{k+1})-\frac{\eps}{2}\right),
\end{equation}
if $s_{k+1}^2 \widehat{L}_{k+1}=(1+S_k\mu)S_{k+1}$ with $\mu= \mu_f+\mu_p$.
\end{thm}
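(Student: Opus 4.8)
The statement is the induction step that propagates condition \gzit{e.ineq}: the plan is to assume $S_k\bigl(h(x_k)-\tfrac{\eps}{2}\bigr)\le\phi_k^*$ (trivially true at $k=0$, where $S_0=0$ and $\phi_0^*=\min_{x\in C}B_\omega(x,x_0)=B_\omega(x_0,x_0)=0$) and to deduce the same inequality at index $k+1$. The argument is a Nesterov estimation-sequence computation resting on three ingredients: the strong convexity of $\phi_k$ with a modulus that grows with $S_k$, Jensen's inequality for the convex model $q_{k+1}(\cdot,y_k)+\psi$, and the smoothed descent bound \gzit{e.norm2}. The choices $\delta=\eps\alpha_k$ and $s_{k+1}^2\widehat L_{k+1}=(1+S_k\mu)S_{k+1}$ are precisely what make the two error contributions and the leftover quadratic terms cancel.

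First I would verify that $\phi_k$ is $(1+S_k\mu)$-strongly convex. This follows by induction from \gzit{e.phik}: $\phi_0=B_\omega(\cdot,x_0)$ is $1$-strongly convex by \gzit{e.stro}, and each increment $s_i[q_i(\cdot,y_{i-1})+\psi]$, $i\le k$, raises the modulus by $s_i\mu_f$ (the quadratic term of $q_i$) plus $s_i\mu_p$ (from $\psi$), so after $k$ steps the modulus is $1+\sum_{i=1}^k s_i(\mu_f+\mu_p)=1+S_k\mu$. Since $z_k$ minimizes $\phi_k$ over $C$, combining the optimality condition \gzit{e.fopt} for $z_k$ with this strong convexity gives $\phi_k(z_{k+1})\ge\phi_k^*+\tfrac{1+S_k\mu}{2}\|z_{k+1}-z_k\|^2$.

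Next I would expand $\phi_{k+1}^*=\phi_{k+1}(z_{k+1})=\phi_k(z_{k+1})+s_{k+1}[q_{k+1}(z_{k+1},y_k)+\psi(z_{k+1})]$, insert the lower bound just obtained, and then use the induction hypothesis together with $h(x_k)=f(x_k)+\psi(x_k)\ge q_{k+1}(x_k,y_k)+\psi(x_k)$ (this inequality being $\mu_f$-strong convexity of $f$ at $y_k$). Because $x\mapsto q_{k+1}(x,y_k)+\psi(x)$ is convex and, by \gzit{e.yk1}--\gzit{e.xk1}, $x_{k+1}=(1-\alpha_k)x_k+\alpha_k z_{k+1}$ with $1-\alpha_k=S_k/S_{k+1}$ and $\alpha_k=s_{k+1}/S_{k+1}$, Jensen's inequality collapses the terms $S_k[q_{k+1}(x_k,y_k)+\psi(x_k)]+s_{k+1}[q_{k+1}(z_{k+1},y_k)+\psi(z_{k+1})]$ into $S_{k+1}[q_{k+1}(x_{k+1},y_k)+\psi(x_{k+1})]$. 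Then I would apply \gzit{e.norm2} at $x=x_{k+1}$, $y=y_k$, $\delta=\eps\alpha_k$, $\widehat L=\widehat L_{k+1}$ (admissible since \gzit{e.lhat1} meets \gzit{e.lnu} with equality) and discard the nonnegative $\tfrac{\mu_f}{2}\|x_{k+1}-y_k\|^2$ to get $q_{k+1}(x_{k+1},y_k)\ge f(x_{k+1})-\tfrac{\widehat L_{k+1}}{2}\|x_{k+1}-y_k\|^2-\tfrac{\eps\alpha_k}{2}$, which yields $\phi_{k+1}^*\ge S_{k+1}h(x_{k+1})-\tfrac{S_{k+1}\widehat L_{k+1}}{2}\|x_{k+1}-y_k\|^2-\tfrac{\eps}{2}(s_{k+1}+S_k)+\tfrac{1+S_k\mu}{2}\|z_{k+1}-z_k\|^2$, using $S_{k+1}\alpha_k=s_{k+1}$.

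Finally, $s_{k+1}+S_k=S_{k+1}$ turns the error term into $\tfrac{\eps}{2}S_{k+1}$, and subtracting \gzit{e.yk1} from \gzit{e.xk1} gives $x_{k+1}-y_k=\alpha_k(z_{k+1}-z_k)$, so $\|x_{k+1}-y_k\|^2=\alpha_k^2\|z_{k+1}-z_k\|^2$ and the two quadratic terms merge into $\tfrac{1}{2S_{k+1}}\bigl((1+S_k\mu)S_{k+1}-s_{k+1}^2\widehat L_{k+1}\bigr)\|z_{k+1}-z_k\|^2$ (using $\alpha_k^2=s_{k+1}^2/S_{k+1}^2$), which is exactly $0$ under the hypothesis $s_{k+1}^2\widehat L_{k+1}=(1+S_k\mu)S_{k+1}$; hence $\phi_{k+1}^*\ge S_{k+1}(h(x_{k+1})-\eps/2)$. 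I expect the only delicate points to be the clean propagation of the modulus $1+S_k\mu$ through \gzit{e.phik} and the accounting that forces the two $\eps$-error contributions (the one carried by the induction hypothesis, scaled by $S_k$, and the one from \gzit{e.norm2}, scaled by $S_{k+1}$) to sum precisely to $\tfrac{\eps}{2}S_{k+1}$; once $x_{k+1}-y_k=\alpha_k(z_{k+1}-z_k)$ is in hand, the closing condition on $s_{k+1}$ does the rest and everything else is routine algebra.
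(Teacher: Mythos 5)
Your proposal is correct and follows essentially the same estimation-sequence argument as the paper: induction, the $(1+S_k\mu)$-strong convexity of $\phi_k$ giving $\phi_k(z_{k+1})\ge\phi_k^*+\tfrac{1+S_k\mu}{2}\|z_{k+1}-z_k\|^2$, combination of the $x_k$- and $z_{k+1}$-terms via the convex combination $x_{k+1}=(1-\alpha_k)x_k+\alpha_k z_{k+1}$, the bound \gzit{e.norm2} with $\delta=\eps\alpha_k$, and cancellation of the quadratics under $s_{k+1}^2\widehat L_{k+1}=(1+S_k\mu)S_{k+1}$. The only (immaterial) difference is bookkeeping: you apply Jensen to the full model $q_{k+1}(\cdot,y_k)+\psi$ and discard the $\mu_f$-quadratic at the end, whereas the paper drops it at the outset and combines the linear part through the explicit identity $S_k(x_k-y_k)+s_{k+1}(z_{k+1}-y_k)=s_{k+1}(z_{k+1}-z_k)$ and the $\psi$-part through convexity separately.
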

%%%%%%%%%

%%%%%%%%%%%%%
\begin{proof}
The proof is given by induction. Since $S_0=0$, the result for $k=0$ is
evident. Assume that (\ref{e.inqks}) holds for some $k$, and we show that is valid for $k+1$. 

Let us expand $\phi_k$, i.e.,
\lbeq{e.phik11}
\phi_k(x)= 
B_\omega(x,x_0)+\sum_{i=1}^{k} s_i q_i(x,y_{i-1})+S_k \psi(x).
\eeq 
Since $\psi$ is $\mu_p$-strongly convex, (\ref{e.phik11}) implies that $\phi_k$ is $(1+S_k\mu)$-strongly convex. This
and (\ref{e.phik}) at $z_k$ yield 
\begin{equation}\label{e.eine}
\phi_k(x)\geq \phi_k^*+\frac{1+S_k\mu}{2} \|x-z_k\|^2~~~ \forall x\in C.
\end{equation}
From the induction assumption and the convexity of $f$, we obtain
\begin{equation}\label{e.inqks1}
\phi_k^* \geq S_k\left(h(x_k)-\frac{\eps}{2} \right) \geq 
S_k\left(f(y_k)+\langle\nabla f(y_k), x_k-y_k\rangle+\psi(x_k)-\frac{\eps}{2} \right).
\end{equation}
The definition of $y_k$ given in (\ref{e.yk1}) leads to
\begin{equation}\label{e.sim}
\begin{split}
S_k(x_k-y_k)+s_{k+1}(z_{k+1}-y_k) &= S_k x_k-S_{k+1}y_k+s_{k+1}z_{k+1}\\
&= S_k x_k-S_{k+1}((1-\alpha_k)x_k+\alpha_k z_k)+s_{k+1}z_{k+1}
= s_{k+1}(z_{k+1}-z_k).
\end{split}
\end{equation}
By this, (\ref{e.phik}), (\ref{e.eine}), (\ref{e.inqks1}), and (\ref{e.sim}), one can write
\begin{equation}\label{e.phik3}
\begin{split}
\phi_{k+1}^*&\geq \phi_k(z_{k+1})+s_{k+1} \left(f(y_k)+
\langle \nabla f(y_k), z_{k+1}-y_k \rangle +\psi(z_{k+1})\right)\\
&\geq \phi_k^*+\frac{1+S_k\mu}{2} \|z_{k+1}-z_k\|^2+ s_{k+1} \left(f(y_k)+
\langle \nabla f(y_k), z_{k+1}-y_k \rangle +\psi(z_{k+1})\right)\\
&\geq S_k\left(f(y_k)+\langle \nabla f(y_k), x_k-y_k \rangle+\psi(x_k) -\frac{\eps}{2} \right)+\frac{1+S_k\mu}{2} \|z_{k+1}-z_k\|^2\\
&+s_{k+1}\left[f(y_k)+ \langle \nabla f(y_k), z_{k+1}-y_k \rangle +\psi(z_{k+1})\right]\\
&= S_{k+1} f(y_k)+S_k \psi(x_k)+s_{k+1}\psi(z_{k+1})-S_k\frac{\eps}{2}
+\frac{1+S_k\mu}{2} \|z_{k+1}-z_k\|^2 \\
&+\langle \nabla f(y_k), S_k(x_k-y_k)+s_{k+1}(z_{k+1}-y_k) \rangle\\
&= S_{k+1} f(y_k)+S_k \psi(x_k)+s_{k+1}\psi(z_{k+1})-S_k\frac{\eps}{2}
+\frac{1+S_k\mu}{2}\|z_{k+1}-z_k\|^2\\
&+ s_{k+1}\langle \nabla f(y_k), z_{k+1}-z_k \rangle.
\end{split} 
\end{equation}
By the convexity of $\psi$ and (\ref{e.xk1}), we get
\begin{equation}\label{e.psiin}
S_k \psi(x_k)+s_{k+1}\psi(z_{k+1})=S_{k+1}(\alpha_k \psi(z_{k+1})+(1-\alpha_k)
\psi(x_k))\geq S_{k+1} \psi(x_{k+1}).
\end{equation}
The definitions of $y_k$ and $x_{k+1}$ yield
\[
x_{k+1}-y_k= (1-\alpha_k)x_k+\alpha_k z_{k+1}- (1-\alpha_k)x_k-\alpha_k z_k
= \alpha_k (z_{k+1}-z_k).
\]
From this, (\ref{e.phik3}), and (\ref{e.psiin}), we obtain
\begin{equation}\label{e.phik4}
\begin{split}
\phi_{k+1}^* &\geq  S_{k+1} f(y_k)+S_{k+1} \psi(x_{k+1})
+S_{k+1}\langle \nabla f(y_k), x_{k+1}-y_k \rangle-S_k\frac{\eps}{2}+\frac{1+S_k\mu}{2} \|z_{k+1}-z_k\|^2.
\end{split} 
\end{equation}
By (\ref{e.norm2}) for $\delta=\alpha_k \eps$, we get
\[
f(y_k)+\langle\nabla f(y_k),x_{k+1}-y_k \rangle \geq f(x_{k+1})
-\frac{\widehat{L}_{k+1}}{2} \|x_{k+1}-y_k\|^2-\frac{\alpha_k \eps}{2}.
\]
It follows from this, (\ref{e.inbreg}), and (\ref{e.phik4}) that
\[
\begin{split}
\phi_{k+1}^* &\geq S_{k+1} \left(f(y_k)+\langle \nabla f(y_k), x_{k+1}-y_k \rangle\right)
+S_{k+1} \psi(x_{k+1})-S_k\frac{\eps}{2}+\frac{1+S_k\mu}{2} \|z_{k+1}-z_k\|^2\\
&\geq S_{k+1} \left(f(x_{k+1})-\frac{\widehat{L}_{k+1}}{2} \|x_{k+1}-y_k\|^2-\frac{\alpha_k \eps}{2} \right)
+S_{k+1} \psi(x_{k+1})-S_k\frac{\eps}{2}+\frac{1+S_k\mu}{2\alpha_k^2}\|x_{k+1}-y_k\|^2\\
&= S_{k+1} \left(h(x_{k+1})-\frac{\eps}{2} \right)
+\frac{1}{2}\frac{S_{k+1}}{s_{k+1}^2}\left( (1+S_k\mu)S_{k+1}-s_{k+1}^2 \widehat{L}_{k+1} \right) \|x_{k+1}-y_k\|^2.
\end{split} 
\]
Therefore, $s_{k+1}^2 \widehat{L}_{k+1}=(1+S_k\mu)S_{k+1}$ implies that (\ref{e.inqks})
holds. \qed
\end{proof}
%%%%%%%%%%%

Let us assume that $\widehat{L}_{k+1}$ is given. Then $s_{k+1}$ is given by the
positive solution of the equation $s_{k+1}^2 \widehat{L}_{k+1}=(1+S_k\mu)S_{k+1}$, i.e.,
\begin{equation}\label{e.skp1}
s_{k+1}=\frac{1+S_k\mu+((1+S_k\mu)^2+4\widehat{L}_{k+1}S_k(1+S_k\mu))^{1/2}}{2\widehat{L}_{k+1}}>0.
\end{equation}
Indeed, Theorem \ref{t.alg1} leads to a simple scheme for solving problems of
the form (\ref{e.gfun}). We summarize this scheme in the following. 

\vspace{4mm}
%%%%%%%%%%%%%%%%%%%%
\begin{algorithm}[H] 
\DontPrintSemicolon % Some LaTeX compilers require you to use \dontprintsemicolon instead
\KwIn{ initial point $x_0 \in C$,~$\nu$,~$L_\nu$,~$\mu\geq 0$;~ $\eps>0$;}
\KwOut{$x_k$,~ $h_k$;}
\Begin{
    \While {stopping criteria do not hold}{
        compute $\widehat{L}_{k+1}$;~
        compute $s_{k+1}$ by (\ref{e.skp1});~$S_{k+1} = S_k+s_{k+1}$;~ 
        $\alpha_k = s_{k+1}/S_{k+1}$;\;
        $y_k = \alpha_k z_k+(1-\alpha_k) x_k$;~
        compute $z_{k+1}$ from (\ref{e.zk});~
        $x_{k+1}=(1-\alpha_k)x_k+\alpha_k z_{k+1}$;      
        $k=k+1$;\;
    }
    $h_k=h(x_k)$;
}
\caption{ {\bf ASGA-1} (single-subproblem ASGA)}
\label{a.aga} 
\end{algorithm}
%%%%%%%%%%%%%%%

\vspace{5mm}
ASGA-1 has a simple structure and each iteration needs only a solution of the
auxiliary problem (\ref{e.zk}) (Line 4), i.e., only one call of the oracle is needed per each iteration. Let us denote by $N(k)$ the total number of calls of
the first-order oracle after $k$ iterations. Therefore, we have that $N(k)=k$
for ASGA-1. 

For implementation of ASGA-1 one needs to know about $\widehat{L}_{k+1}$ in
each step. The next result shows how to compute $\widehat{L}_{k+1}$ if the
parameters $\nu$ and $L_\nu$ are available.

%%%%%%%%%%%%%%%%%%%%%%%%%%
\begin{prop}\label{p.lhat}
Let $\{y_k\}_{k\geq 0}$, $\{z_k\}_{k\geq 0}$, and $\{x_k\}_{k\geq 0}$ be
generated by ASGA-1 and $s_{k+1}^2 \widehat{L}_{k+1}=(1+S_k\mu)S_{k+1}$.
Then $\widehat{L}_{k+1}$ can be computed by solving the one-dimensional 
nonlinear equation 
\begin{equation}\label{e.nequ}
\widehat{L}_{k+1}-\left( 1+S_k\mu+((1+S_k\mu)^2+4\widehat{L}_{k+1}S_k(1+S_k\mu))^{1/2}
\right)^{\frac{1-\nu}{1+\nu}} \widetilde{L}_{k+1}=0,
\end{equation}
where
\begin{equation}\label{e.ltilde}
\widetilde{L}_{k+1}:=\left( \frac{1-\nu}{2(1+S_k\mu)\eps(1+\nu)} \right)^{\frac{1-\nu}{1+\nu}} L_{\nu}^{\frac{2}{1+\nu}}.
\end{equation}
\end{prop}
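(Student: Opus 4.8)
The plan is to treat (\ref{e.nequ}) as nothing but the formula (\ref{e.lhat1}) for $\widehat{L}_{k+1}$ rewritten once the auxiliary quantities $\alpha_k$ and $s_{k+1}$ have been eliminated in favour of $\widehat{L}_{k+1}$, $S_k$ and $\mu$. Recall from Theorem~\ref{t.alg1} that
\[
\widehat{L}_{k+1} = \left( \frac{1-\nu}{\eps\,\alpha_k(1+\nu)} \right)^{\frac{1-\nu}{1+\nu}} L_{\nu}^{\frac{2}{1+\nu}}, \qquad \alpha_k = \frac{s_{k+1}}{S_{k+1}},
\]
and that $s_{k+1}$ is coupled to the scaling parameters through $s_{k+1}^2\widehat{L}_{k+1} = (1+S_k\mu)S_{k+1}$. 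The first step is to use this coupling to express $1/\alpha_k$: it gives $S_{k+1} = s_{k+1}^2\widehat{L}_{k+1}/(1+S_k\mu)$, whence
\[
\frac{1}{\alpha_k} = \frac{S_{k+1}}{s_{k+1}} = \frac{s_{k+1}\widehat{L}_{k+1}}{1+S_k\mu},
\]
so that everything reduces to computing the product $s_{k+1}\widehat{L}_{k+1}$.

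For that I would invoke the closed form (\ref{e.skp1}) for the positive root of the quadratic $s_{k+1}^2\widehat{L}_{k+1} = (1+S_k\mu)(S_k+s_{k+1})$, which gives
\[
s_{k+1}\widehat{L}_{k+1} = \tfrac12\Big( 1+S_k\mu + \big((1+S_k\mu)^2 + 4\widehat{L}_{k+1}S_k(1+S_k\mu)\big)^{1/2} \Big).
\]
Dividing by $1+S_k\mu$ produces $1/\alpha_k$; substituting this into the formula for $\widehat{L}_{k+1}$ above and splitting the resulting power of a product, the constant part being precisely $\widetilde{L}_{k+1}$ of (\ref{e.ltilde}), leaves
\[
\widehat{L}_{k+1} = \widetilde{L}_{k+1}\Big( 1+S_k\mu + \big((1+S_k\mu)^2 + 4\widehat{L}_{k+1}S_k(1+S_k\mu)\big)^{1/2} \Big)^{\frac{1-\nu}{1+\nu}},
\]
and transposing gives precisely (\ref{e.nequ}). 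For $\nu=1$ the exponent $\frac{1-\nu}{1+\nu}$ vanishes, and with the convention $0^0=1$ the equation degenerates to $\widehat{L}_{k+1} = \widetilde{L}_{k+1} = \widetilde{L}$, recovering the Lipschitz case, so the formula is consistent.

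To round off the proposition I would append a short remark that (\ref{e.nequ}) genuinely admits a positive solution $\widehat{L}_{k+1}$: viewing its left-hand side as a continuous function $g$ of $t:=\widehat{L}_{k+1}>0$, one has $g(t)\to -\big(2(1+S_k\mu)\big)^{\frac{1-\nu}{1+\nu}}\widetilde{L}_{k+1}<0$ as $t\to 0^+$, whereas as $t\to\infty$ the linear term $t$ dominates the remaining term, whose growth is $O\big(t^{\frac{1-\nu}{2(1+\nu)}}\big)$ with $\frac{1-\nu}{2(1+\nu)}<1$ for every $\nu\in[0,1]$, so $g(t)\to+\infty$; the intermediate value theorem then supplies the sought root. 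I do not anticipate any genuine obstacle here: the core of the proof is a one-line algebraic substitution, and the only care needed is in bookkeeping the exponents $\frac{1-\nu}{1+\nu}$ and $\frac{2}{1+\nu}$ correctly and in treating $\nu=1$ through the stated $0^0=1$ convention.
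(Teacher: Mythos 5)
Your proposal is correct and follows essentially the same route as the paper: both eliminate $\alpha_k$ via $\alpha_k=(1+S_k\mu)/(s_{k+1}\widehat{L}_{k+1})$, substitute the closed form (\ref{e.skp1}) for $s_{k+1}$ into (\ref{e.lhat1}) with $\delta=\alpha_k\eps$, and then establish existence of a root by a sign change of the resulting one-variable function (negative near zero, positive for large argument since the non-linear term grows sublinearly). The only differences are cosmetic: you work with $1/\alpha_k$ instead of $\alpha_k$, and you add a consistency check at $\nu=1$ that the paper omits.
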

%%%%%%%%%%

%%%%%%%%%%%%%
\begin{proof}
The solution of $s_{k+1}^2 \widehat{L}_{k+1}=(1+S_k\mu)S_{k+1}$ is given by
(\ref{e.skp1}). The definition of $\alpha_k$ and dividing both sides of 
$s_{k+1}^2 \widehat{L}_{k+1}=(1+S_k\mu)S_{k+1}$ by $S_{k+1}$ yield
$\alpha_k=(1+S_k\mu)/s_{k+1}\widehat{L}_{k+1}$. Substituting (\ref{e.skp1}) into this equation
gives
\[
\alpha_k=2(1+S_k\mu)/\left(1+S_k\mu+((1+S_k\mu)^2+
4\widehat{L}_{k+1}S_k(1+S_k\mu))^{1/2}\right).
\]
By substituting this into (\ref{e.lhat1}) with $\delta=\alpha_k \eps$, we get
\[
\widehat{L}_{k+1}= \left( 1+S_k\mu+((1+S_k\mu)^2+4\widehat{L}_{k+1}S_k(1+S_k\mu))^{1/2}
\right)^{\frac{1-\nu}{1+\nu}} \left( \frac{1-\nu}{2(1+S_k\mu)\eps(1+\nu)} \right)^{\frac{1-\nu}{1+\nu}} L_{\nu}^{\frac{2}{1+\nu}},
\]
giving (\ref{e.nequ}) where $\widetilde{L}_{k+1}$ is given by (\ref{e.ltilde}). It remains to show that the equation (\ref{e.nequ}) has a solution. Let us define
$\zeta:\mathbb{R} \rightarrow\mathbb{R}$ by
\[
\zeta(\theta):=\theta-\left( 1+S_k\mu+((1+S_k\mu)^2+4\theta S_k(1+S_k\mu))^{1/2} \right)^{\frac{1-\nu}{1+\nu}} \widetilde{L}_{k+1}.
\]
Since $\widetilde{L}_{k+1}>0$, we get $\zeta(0)<0$. We also have
\[
\lim_{\theta\rightarrow \infty} \theta/\left( 1+S_k\mu+ ((1+S_k\mu)^2
+4\theta S_k(1+S_k\mu))^{1/2} \right)^{\frac{1-\nu}{1+\nu}} \widetilde{L}_{k+1}=+\infty,
\] 
implying there exists $\theta_1>0$ such that for $\theta>\theta_1$ we have 
\[
\theta > \left( 1+S_k\mu+ ((1+S_k\mu)^2 +4\theta S_k(1+S_k\mu))^{1/2}
\right)^{\frac{1-\nu}{1+\nu}} \widetilde{L}_{k+1}.
\]
This implies that for
$\theta>\theta_1$ we have $\zeta(\theta)>0$. Therefore, the equation
(\ref{e.nequ}) has a solution.\qed
\end{proof}
%%%%%%%%%%%

In view of Proposition \ref{p.lhat}, if $\nu$ and $L_\nu$ are available, one
can compute $\widehat{L}_{k+1}$ by solving the one-dimensional nonlinear
equation (\ref{e.nequ}). If one solves the equation $\zeta(\theta)=0$ approximately, and an initial interval $[a,b]$ is available such that $\varphi(a)\varphi(b)<0$, then a solution can be computed to $\eps$-accuracy using the bisection scheme in $\mathcal{O}(\log_2((b-a)/\varepsilon))$ iterations, see, e.g., \cite{NeuB}. However, it is preferable to use a more sophisticated zero finder like the secant bisection scheme (Algorithm 5.2.6, \cite{NeuB}). For solving this nonlinear equation, one can also take advantage
of MATLAB $\mathtt{fzero}$ function combining the bisection scheme, the
inverse quadratic interpolation, and the secant method. On the other hand, if
$\nu$ and $L_\nu$ are not available, ASGA-1 cannot be used directly, which is
the case in many black-box optimization problems. 

The subsequent result gives the complexity of ASGA-1 for attaining an 
$\eps$-solution of (\ref{e.gfun}).

%%%%%%%%%%%%%%%%%%%%%%%%%%%
\begin{thm}\label{t.compl1}
Let $\{x_k\}_{k\geq 0}$ be generated by ASGA-1. Then \\\\
(i) If $\mu>0$, we have
\begin{equation}\label{e.comb2}
h(x_k)-h(x^*) \leq \widehat{L}_1 \left(1+\frac{\mu^{\frac{1+\nu}{1+3\nu}}\eps^{\frac{1-\nu}{1+3\nu}}}{2 L_\nu^{\frac{2}{1+3\nu}}} 
\right)^{-\frac{1+3\nu}{1+\nu}(k-1)} B_\omega(x^*,x_0) + \frac{\eps}{2},
\end{equation}
where
\begin{equation}\label{e.lh0}
\widehat{L}_1=\left( \frac{1-\nu}{\eps(1+\nu)} \right)^{\frac{1-\nu}{1+\nu}}
L_{\nu}^{\frac{2}{1+\nu}}.
\end{equation} 
(ii) If $\mu=0$, we have
\begin{equation}\label{e.comb22}
h(x_k)-h(x^*) \leq \left(\frac{2^{\frac{1+3\nu}{1+\nu}}L_{\nu}^{\frac{2}{1+\nu}}}{\eps^{\frac{1-\nu}{1+\nu}} k ^{\frac{1+3\nu}{1+\nu}}}
\right) B_\omega(x^*,x_0) + \frac{\eps}{2}.
\end{equation} 
\end{thm}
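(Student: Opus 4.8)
The plan is to combine the master inequality \eqref{e.comp} from Proposition \ref{p.inphik} with a lower bound on the scaling parameter $S_k$. Indeed, \eqref{e.comp} gives $h(x_k)-h(x^*) \leq B_\omega(x^*,x_0)/S_k + \eps/2$, so everything reduces to showing $S_k$ grows fast enough. By Theorem \ref{t.alg1}, the recursion linking the $S_k$'s is $s_{k+1}^2 \widehat L_{k+1} = (1+S_k\mu)S_{k+1}$ with $S_{k+1}=S_k+s_{k+1}$, and $\widehat L_{k+1}$ solves \eqref{e.nequ}. So first I would extract from \eqref{e.nequ}–\eqref{e.ltilde} a usable upper bound on $\widehat L_{k+1}$ in terms of $S_k$: since $\alpha_k \le 1$ forces $s_{k+1}\ge 1+S_k\mu$ divided by something, the exponent $\tfrac{1-\nu}{1+\nu}$ on the bracket in \eqref{e.nequ} can be controlled, yielding a bound of the shape $\widehat L_{k+1} \le c\, S_{k+1}^{(1-\nu)/(1+\nu)}$ for an explicit constant $c$ involving $L_\nu$, $\eps$, $\nu$ (and, in case (i), absorbing the $(1+S_k\mu)$ factors using $S_k\mu \ge 0$).

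Next I would feed this into $s_{k+1}^2 \widehat L_{k+1} = (1+S_k\mu)S_{k+1}$. In case (ii) ($\mu=0$) this reads $s_{k+1}^2 \widehat L_{k+1} = S_{k+1}$, so with the bound on $\widehat L_{k+1}$ we get $s_{k+1}^2 \le c^{-1} S_{k+1}^{2\nu/(1+\nu)}$, i.e. $s_{k+1} \le c^{-1/2} S_{k+1}^{\nu/(1+\nu)}$. Writing $s_{k+1}=S_{k+1}-S_k$ and treating this as a discrete differential inequality for the quantity $S_k^{1/(1+\nu)}$ (the standard trick: $S_{k+1}^{1/(1+\nu)}-S_k^{1/(1+\nu)} \ge (S_{k+1}-S_k)\cdot \tfrac{1}{1+\nu} S_{k+1}^{-\nu/(1+\nu)}$ by concavity), I would sum from $1$ to $k$ to obtain $S_k^{1/(1+\nu)} \ge \mathrm{const}\cdot k$, hence $S_k \ge \mathrm{const}\cdot k^{(1+\nu)/(1-\cdot)}$... more precisely $S_k \gtrsim k^{(1+\nu)/1}$ scaled — tracking the constants carefully gives $S_k \ge \tfrac{\eps^{(1-\nu)/(1+\nu)} k^{(1+3\nu)/(1+\nu)}}{2^{(1+3\nu)/(1+\nu)} L_\nu^{2/(1+\nu)}}$, which plugged into \eqref{e.comp} is exactly \eqref{e.comb22}. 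In case (i) ($\mu>0$) the recursion $s_{k+1}^2\widehat L_{k+1}=(1+S_k\mu)S_{k+1}$ together with $\alpha_k=s_{k+1}/S_{k+1}$ gives a geometric-type growth: I would show $S_{k+1}/S_k \ge 1 + \alpha_k$ with $\alpha_k$ bounded below by a constant $\rho := \tfrac{\mu^{(1+\nu)/(1+3\nu)}\eps^{(1-\nu)/(1+3\nu)}}{2 L_\nu^{2/(1+3\nu)}}$ raised to the appropriate power — this is where the bound on $\widehat L_{k+1}$ and the identity $\alpha_k \widehat L_{k+1} s_{k+1} = 1+S_k\mu$ combine. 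Iterating yields $S_k \ge S_1 (1+\rho)^{(1+3\nu)/(1+\nu)\,(k-1)}$ with $S_1 = s_1 = 1/\widehat L_1$ and $\widehat L_1$ as in \eqref{e.lh0} (the $k=1$ value of \eqref{e.lhat1} with $\alpha_0=1$); substituting into \eqref{e.comp} produces \eqref{e.comb2}.

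The main obstacle is the bookkeeping around $\widehat L_{k+1}$: it is only implicitly defined through \eqref{e.nequ}, so I need a clean two-sided estimate relating $\widehat L_{k+1}$, $S_{k+1}$, $\alpha_k$ and $(1+S_k\mu)$ that is tight enough to recover the stated constants $2^{(1+3\nu)/(1+\nu)}$ and the precise exponents $(1\pm\nu)/(1+3\nu)$. The cleanest route is probably to work with $\alpha_k$ directly: from $\alpha_k=(1+S_k\mu)/(s_{k+1}\widehat L_{k+1})$ and $s_{k+1}=\alpha_k S_{k+1}$ one gets $\alpha_k^2 \widehat L_{k+1} S_{k+1}=1+S_k\mu$, and substituting \eqref{e.lhat1} (with $\delta=\alpha_k\eps$) turns this into a single scalar equation for $\alpha_k$ in terms of $S_{k+1}$ and $S_k\mu$, from which the monotonicity in $k$ and the required lower bounds follow by elementary estimates. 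The rest is the two summation/iteration arguments sketched above, which are routine once the $\widehat L_{k+1}$ estimate is in hand.
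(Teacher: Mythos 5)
Your overall strategy coincides with the paper's: combine (\ref{e.comp}) with a lower bound on $S_k$ extracted from the step-size equation $s_{k+1}^2\widehat L_{k+1}=(1+S_k\mu)S_{k+1}$, summing a telescoping quantity when $\mu=0$ and iterating it geometrically when $\mu>0$. However, the concrete estimates you write down for case (ii) do not work as stated. First, the inequality $s_{k+1}^2\le c^{-1}S_{k+1}^{2\nu/(1+\nu)}$ points the wrong way: an upper bound on $\widehat L_{k+1}$ combined with $s_{k+1}^2\widehat L_{k+1}=S_{k+1}$ yields a \emph{lower} bound on $s_{k+1}$, which is what the telescoping argument requires. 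Second, and more seriously, a bound of the shape $\widehat L_{k+1}\le c\,S_{k+1}^{(1-\nu)/(1+\nu)}$ discards the dependence of $\widehat L_{k+1}$ on $\alpha_k=s_{k+1}/S_{k+1}$ in (\ref{e.lhat1}); it is not justified without an a priori lower bound on $s_{k+1}$, and even granting it, it leads to $s_{k+1}\gtrsim S_{k+1}^{\nu/(1+\nu)}$ and hence, via your telescoping of $S_k^{1/(1+\nu)}$, only to $S_k\gtrsim k^{1+\nu}$. Since $1+\nu<\tfrac{1+3\nu}{1+3\nu}\cdot\tfrac{1+3\nu}{1+\nu}=\tfrac{1+3\nu}{1+\nu}$ fails to hold---more precisely, $(1+\nu)^2<1+3\nu$ for $\nu\in{]0,1[}$, so $1+\nu<\tfrac{1+3\nu}{1+\nu}$---this growth rate is strictly weaker than what (\ref{e.comb22}) asserts, and the telescoping quantity $S_k^{1/(1+\nu)}$ is the wrong power.

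The repair is precisely the one you sketch in your final paragraph, and it is what the paper actually does: keep the $\alpha_k$-dependence, substitute $\widehat L_k\le(\eps\alpha_{k-1})^{-(1-\nu)/(1+\nu)}L_\nu^{2/(1+\nu)}$ and $\alpha_{k-1}=s_k/S_k$ into $s_k^2/S_k=(1+S_{k-1}\mu)/\widehat L_k$, which produces the self-referential inequality $s_k^{(1+3\nu)/(1+\nu)}\ge(1+S_{k-1}\mu)\,\eps^{(1-\nu)/(1+\nu)}L_\nu^{-2/(1+\nu)}S_k^{2\nu/(1+\nu)}$, i.e.\ (\ref{e.inskp2}). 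The correct telescoping quantity is then $S_k^{(1+\nu)/(1+3\nu)}$, whose increments dominate $\tfrac12 s_kS_k^{-2\nu/(1+3\nu)}$; summation gives (\ref{e.comb22}), and for $\mu>0$ the same increment bound gives $S_k^{(1+\nu)/(1+3\nu)}\ge(1+\rho)S_{k-1}^{(1+\nu)/(1+3\nu)}$ with $\rho=2^{-1}\mu^{(1+\nu)/(1+3\nu)}\eps^{(1-\nu)/(1+3\nu)}L_\nu^{-2/(1+3\nu)}$ and $S_1=\widehat L_1^{-1}$, which is (\ref{e.comb2}); your alternative route through $S_{k+1}/S_k\ge1+\alpha_k$ with $\alpha_k$ bounded below would give geometric growth but not the exponent $\tfrac{1+3\nu}{1+\nu}(k-1)$ with the stated base. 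In short, you have the right skeleton and, at the end, the right key computation, but the intermediate derivation as written would not deliver the stated exponents or constants.
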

%%%%%%%%%

%%%%%%%%%%%%%
\begin{proof}
(i) By (\ref{e.lhat1}), $s_k^2 \widehat{L}_k=(1+S_{k-1}\mu)S_k$, $\alpha_{k-1}=s_k/S_k$,
we get
\[
\frac{s_k^2}{S_k} = \frac{1+S_{k-1}\mu}{\widehat{L}_k} 
\geq (1+S_{k-1}\mu)(\eps \alpha_{k-1})^{\frac{1-\nu}{1+\nu}}L_\nu^{-\frac{2}{1+\nu}},
\]
leading to
\[
s_k^2 \geq (1+S_{k-1}\mu) L_\nu^{-\frac{2}{1+\nu}} (\eps s_k)^{\frac{1-\nu}{1+\nu}} S_k^{\frac{2\nu}{1+\nu}}.
\]
This implies
\begin{equation}\label{e.inskp2}
s_k S_k^{-\frac{2\nu}{1+3\nu}} \geq (1+S_{k-1}\mu)^{\frac{1+\nu}{1+3\nu}}
\eps^{\frac{1-\nu}{1+3\nu}} L_\nu^{-\frac{2}{1+3\nu}}.
\end{equation}
It follows from $S_{k+1} \geq S_k$ and (\ref{e.inskp2}) that
\[
\begin{split}
S_k^{\frac{1+\nu}{1+3\nu}}-S_{k-1}^{\frac{1+\nu}{1+3\nu}} &\geq 
(S_k-S_{k-1})/\left(S_k^{1-\frac{1+\nu}{1+3\nu}}-S_{k-1}^{1-\frac{1+\nu}{1+3\nu}} \right) \geq \frac{1}{2} s_k S_k^{-\frac{2\nu}{1+3\nu}}\\
&\geq 2^{-1} (1+S_{k-1}\mu)^{\frac{1+\nu}{1+3\nu}}\eps^{\frac{1-\nu}{1+3\nu}}
L_\nu^{-\frac{2}{1+3\nu}} \geq 2^{-1} 
(S_{k-1}\mu)^{\frac{1+\nu}{1+3\nu}}\eps^{\frac{1-\nu}{1+3\nu}}
L_\nu^{-\frac{2}{1+3\nu}}.
\end{split}
\]
By $S_0=0$ and (\ref{e.nequ}), we have  $S_1=\widehat{L}_1^{-1}$, where
$L_0$ is given by (\ref{e.lh0}). Hence we have
\[
S_k^{\frac{1+\nu}{1+3\nu}}\geq \left(1+2^{-1} 
\mu^{\frac{1+\nu}{1+3\nu}}\eps^{\frac{1-\nu}{1+3\nu}}
L_\nu^{-\frac{2}{1+3\nu}}\right) S_{k-1}^{\frac{1+\nu}{1+3\nu}}
\geq \dots \geq \left(1+2^{-1} 
\mu^{\frac{1+\nu}{1+3\nu}}\eps^{\frac{1-\nu}{1+3\nu}}
L_\nu^{-\frac{2}{1+3\nu}}\right)^{k-1} S_1^{\frac{1+\nu}{1+3\nu}},
\]
leading to
\[
S_k\geq \widehat{L}_1^{-1} \left(1+2^{-1} \mu^{\frac{1+\nu}{1+3\nu}}\eps^{\frac{1-\nu}{1+3\nu}}
L_\nu^{-\frac{2}{1+3\nu}}\right)^{\frac{1+3\nu}{1+\nu}(k-1)}.
\]
This inequality and (\ref{e.comp}) give (\ref{e.comb2}). 

(ii) Substituting $\mu=0$ into (\ref{e.inskp2}) yields 
\[
s_k S_k^{-\frac{2\nu}{1+3\nu}} \geq \eps^{\frac{1-\nu}{1+3\nu}}
L_\nu^{-\frac{2}{1+3\nu}}.
\]
It follows from $S_k \geq S_{k-1}$ and (\ref{e.inskp2}) that
\[
S_k^{\frac{1+\nu}{1+3\nu}}-S_{k-1}^{\frac{1+\nu}{1+3\nu}} \geq 
(S_k-S_{k-1})/\left(S_k^{1-\frac{1+\nu}{1+3\nu}}-S_{k-1}^{1-\frac{1+\nu}{1+3\nu}} \right) \geq \frac{1}{2} s_k S_k^{-\frac{2\nu}{1+3\nu}}
\geq 2^{-1} \eps^{\frac{1-\nu}{1+3\nu}}
L_\nu^{-\frac{2}{1+3\nu}}.
\]
Let us sum up this inequality for $i=0,\dots,k$, giving
\[
S_k^{\frac{1+\nu}{1+3\nu}} \geq k 2^{-1} \eps^{\frac{1-\nu}{1+3\nu}}
L_\nu^{-\frac{2}{1+3\nu}},
\]
leading to
\[
S_k \geq k^{\frac{1+3\nu}{1+\nu}} \eps^{\frac{1-\nu}{1+\nu}}
2^{-\frac{1+3\nu}{1+\nu}} L_\nu^{-\frac{2}{1+\nu}}.
\]
This inequality and (\ref{e.comp}) give (\ref{e.comb22}). \qed
\end{proof}
%%%%%%%%%%%

The next result gives the complexity of ASGA-1 for giving an $\eps$-solution of
the problem (\ref{e.gfun}).

%%%%%%%%%%%%%%%%%%%%%%%%%%
\begin{cor}\label{c.comp1}
Let $\{x_k\}_{k\geq 0}$ be generated by ASGA-1. Then \\\\
(i) If $\mu>0$, then an $\eps$-solution of the problem (\ref{e.gfun}) is given by the complexity
\begin{equation}\label{e.compl1}
\mathcal{O}\left( \mu^{-\frac{1+\nu}{1+3\nu}} L_\nu^{\frac{2}{1+3\nu}}
\eps^{-\frac{1-\nu}{1+3\nu}} \ln(\eps^{-\frac{2}{1+\nu}}) \right).
\end{equation}
(ii) If $\mu=0$, then an $\eps$-solution of the problem (\ref{e.gfun}) is given by the complexity
\begin{equation}\label{e.compl2}
\mathcal{O}\left(\eps^{-\frac{2}{1+3\nu}} \right).
\end{equation}
\end{cor}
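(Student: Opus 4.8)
The plan is to read both complexity bounds off the convergence-rate estimates already proved in Theorem~\ref{t.compl1}: in each regime I would determine the smallest iteration index $k$ for which the first (non-$\eps/2$) summand on the right-hand side of the relevant estimate falls below $\eps/2$, so that $h(x_k)-h(x^*)\leq\eps$; since ASGA-1 performs a single oracle call per iteration ($N(k)=k$), that index is exactly the advertised complexity. The two cases are handled separately.

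For part~(ii) ($\mu=0$) I would start from~\gzit{e.comb22} and impose
\[
\frac{2^{\frac{1+3\nu}{1+\nu}}L_{\nu}^{\frac{2}{1+\nu}}}{\eps^{\frac{1-\nu}{1+\nu}}\,k^{\frac{1+3\nu}{1+\nu}}}\,B_\omega(x^*,x_0)\leq\frac{\eps}{2}.
\]
Since $\tfrac{1-\nu}{1+\nu}+1=\tfrac{2}{1+\nu}$, this is equivalent to $k^{\frac{1+3\nu}{1+\nu}}\geq 2^{\frac{1+3\nu}{1+\nu}+1}L_{\nu}^{\frac{2}{1+\nu}}B_\omega(x^*,x_0)\,\eps^{-\frac{2}{1+\nu}}$, and raising both sides to the power $\tfrac{1+\nu}{1+3\nu}$ together with $\tfrac{2}{1+\nu}\cdot\tfrac{1+\nu}{1+3\nu}=\tfrac{2}{1+3\nu}$ gives $k=\mathcal{O}(\eps^{-\frac{2}{1+3\nu}})$, i.e.\ \gzit{e.compl2}. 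Here $B_\omega(x^*,x_0)$ is finite by (H1) and, along with the $\nu$-dependent constants, is absorbed into $\mathcal{O}(\cdot)$.

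For part~(i) ($\mu>0$) I would start from~\gzit{e.comb2}. Writing $t:=\mu^{\frac{1+\nu}{1+3\nu}}\eps^{\frac{1-\nu}{1+3\nu}}\big/\big(2L_\nu^{\frac{2}{1+3\nu}}\big)$, the decaying summand is $\widehat{L}_1(1+t)^{-\frac{1+3\nu}{1+\nu}(k-1)}B_\omega(x^*,x_0)$, so requiring it to be at most $\eps/2$ and taking logarithms yields
\[
k-1\geq\frac{1+\nu}{1+3\nu}\cdot\frac{1}{\ln(1+t)}\cdot\ln\!\left(\frac{2\widehat{L}_1 B_\omega(x^*,x_0)}{\eps}\right).
\]
The two nontrivial factors are then bounded in turn. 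For $\ln(1+t)$: because $\nu\leq 1$ the exponent $\tfrac{1-\nu}{1+3\nu}$ is nonnegative, so $t$ stays bounded as $\eps\to 0$; hence $\ln(1+t)\geq t/(1+t)\geq c\,t$ for a constant $c>0$ over the relevant range of $\eps$, which gives $1/\ln(1+t)=\mathcal{O}\big(L_\nu^{\frac{2}{1+3\nu}}\mu^{-\frac{1+\nu}{1+3\nu}}\eps^{-\frac{1-\nu}{1+3\nu}}\big)$. For the other factor, \gzit{e.lh0} shows $\widehat{L}_1=\mathcal{O}(\eps^{-\frac{1-\nu}{1+\nu}})$, whence $\ln\big(2\widehat{L}_1 B_\omega(x^*,x_0)/\eps\big)=\mathcal{O}\big(\ln\eps^{-(\frac{1-\nu}{1+\nu}+1)}\big)=\mathcal{O}\big(\ln\eps^{-\frac{2}{1+\nu}}\big)$. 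Multiplying the two estimates delivers~\gzit{e.compl1}.

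The one genuinely delicate point is the handling of $\ln(1+t)$ in part~(i): one must ensure that $1/\ln(1+t)$ grows no faster than $1/t$, which rests on $t$ being bounded as $\eps\to 0$ — i.e.\ on $\tfrac{1-\nu}{1+3\nu}\geq 0$. (For $\nu=1$, $t$ is a fixed positive constant and the bound collapses to $\mathcal{O}(\ln(1/\eps))$, matching the smooth strongly convex case.) All the rest is exponent bookkeeping — verifying $\tfrac{1-\nu}{1+\nu}+1=\tfrac{2}{1+\nu}$ and $\tfrac{2}{1+\nu}\cdot\tfrac{1+\nu}{1+3\nu}=\tfrac{2}{1+3\nu}$ — together with absorbing $B_\omega(x^*,x_0)$ and the constants depending only on $\nu$ into the $\mathcal{O}$-notation.
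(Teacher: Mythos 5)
Your proposal is correct and follows essentially the same route as the paper: both proofs read the complexity off Theorem~\ref{t.compl1} by forcing the decaying term in \gzit{e.comb2} (resp.\ \gzit{e.comb22}) below $\eps/2$, take logarithms in the strongly convex case, and do the same exponent bookkeeping, noting $\widehat L_1=\mathcal{O}(\eps^{-\frac{1-\nu}{1+\nu}})$ to produce the $\ln(\eps^{-\frac{2}{1+\nu}})$ factor. The only divergence is in the treatment of the logarithm in part~(i): the paper invokes $\ln(1+t)\leq t$, which strictly speaking yields a necessary rather than sufficient condition on $k$, whereas your lower bound $\ln(1+t)\geq t/(1+t)\geq c\,t$ (using that $t$ stays bounded as $\eps\to 0$ since $\tfrac{1-\nu}{1+3\nu}\geq 0$) is the logically correct direction for a sufficiency argument and reaches the same order.
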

%%%%%%%%%

%%%%%%%%%%%%%
\begin{proof}
From the right hand side of (\ref{e.comb2}), we obtain
\[
2\left(\frac{1-\nu}{1+\nu}\right)^{\frac{1-\nu}{1+\nu}}
L_{\nu}^{\frac{2}{1+\nu}} \left(1+\frac{\mu^{\frac{1+\nu}{1+3\nu}}\eps^{\frac{1-\nu}{1+3\nu}}}{2 L_\nu^{\frac{2}{1+3\nu}}} 
\right)^{-\frac{1+3\nu}{1+\nu}(k-1)} B_\omega(x^*,x_0) \leq 
\eps^{\frac{2}{1+\nu}}, 
\]
leading to
\[
\ln(A_1)-\ln(\eps^{\frac{2}{1+\nu}}) \leq \frac{1+3\nu}{1+\nu}(k-1)
~\ln\left(1+2^{-1}\mu^{\frac{1+\nu}{1+3\nu}}\eps^{\frac{1-\nu}{1+3\nu}}
L_\nu^{-\frac{2}{1+3\nu}}\right)
 \leq \frac{1+3\nu}{2(1+\nu)}~\mu^{\frac{1+\nu}{1+3\nu}}
\eps^{\frac{1-\nu}{1+3\nu}} L_\nu^{-\frac{2}{1+3\nu}}(k-1),
\]
where
\[
A_1:=2\left(\frac{1-\nu}{1+\nu}\right)^{\frac{1-\nu}{1+\nu}}
L_{\nu}^{\frac{2}{1+\nu}} B_\omega(x^*,x_0).
\]
This yields
\[
k \geq \frac{2(1+\nu)}{1+3\nu}~\mu^{-\frac{1+\nu}{1+3\nu}}
\eps^{-\frac{1-\nu}{1+3\nu}} L_\nu^{\frac{2}{1+3\nu}}
\left( \ln(A_1)+\ln(\eps^{-\frac{2}{1+\nu}}) \right),
\]
implying that (\ref{e.compl1}) is valid.

By (\ref{e.comb22}), we get
\[
2^{\frac{1+3\nu}{1+\nu}} L_\nu^{\frac{2}{1+\nu}} k^{-\frac{1+3\nu}{1+\nu}} 
\eps^{-\frac{1-\nu}{1+\nu}} B_\omega(x^*,x_0) + \frac{\eps}{2} \leq \eps,
\]
leading to
\[
k \geq 2^{\frac{2+4\nu}{1+3\nu}} L_\nu^{\frac{2}{1+3\nu}}
\eps^{-\frac{2}{1+3\nu}} B_\omega(x^*,x_0)^{\frac{1+\nu}{1+3\nu}},
\]
implying that (\ref{e.compl2}) is valid. \qed
\end{proof}
%%%%%%%%%%%

In the remainder of this section we give a way to get rid of needing the 
parameters $\nu$ and $L_\nu$ using a backtracking line search guaranteeing
(\ref{e.norm2}). This leads to a parameter-free version of ASGA-1 given in the
next result (see Algorithm \ref{a.aga2}, ASGA-2).

%%%%%%%%%%%%%%%%%%%%%%%%
\begin{thm}\label{t.alg}
Let $f$ satisfies (\ref{e.holder}) with $L_{\nu} < +\infty$.
Let $\alpha_k:=(s_{k+1}/S_{k+1}) \in {]0,1]}$ for $s_{k+1}>0$, the sequence $\{z_k\}_{k\geq 0}$, $\{y_k\}_{k\geq 0}$, and $\{x_k\}_{k\geq 0}$ be generated
by (\ref{e.zk}), (\ref{e.yk1}), and (\ref{e.xk1}), respectively, such that
\begin{equation}\label{e.blin}
f(x_{k+1})\leq f(y_k)+ \langle\nabla f(y_k),x_{k+1}-y_k\rangle+                \frac{L_{k+1}}{2} \|x_{k+1} - y_k\|^2+\frac{\alpha_k\eps}{2},
\end{equation} 
for $L_{k+1}\geq \widetilde{L}>0$. Then (\ref{e.inqks}) holds if 
$s_{k+1}^2 L_{k+1}=(1+S_k\mu)S_{k+1}$.
\end{thm}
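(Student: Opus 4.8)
The statement of Theorem~\ref{t.alg} is the ``backtracking'' counterpart of Theorem~\ref{t.alg1}: instead of choosing $\widehat L_{k+1}$ from the closed formula (\ref{e.lhat1}) and then \emph{deducing} inequality (\ref{e.norm2}) from Proposition~\ref{p.norm2}, here we \emph{assume} that $L_{k+1}$ satisfies (\ref{e.blin}) (which a line search will enforce) and we still wish to conclude (\ref{e.inqks}). The key observation is that the entire argument in the proof of Theorem~\ref{t.alg1} from equation (\ref{e.phik11}) through inequality (\ref{e.phik4}) never uses the specific value of $\widehat L_{k+1}$ --- it uses only the definitions of $\phi_k$, $y_k$, $x_{k+1}$, $z_{k+1}$, the strong convexity of $\phi_k$, the convexity of $f$ and $\psi$, and the induction hypothesis. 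So I would reproduce that part verbatim (with $\widehat L_{k+1}$ replaced by $L_{k+1}$) to arrive at
\[
\phi_{k+1}^* \geq S_{k+1}\bigl(f(y_k)+\langle\nabla f(y_k),x_{k+1}-y_k\rangle\bigr)+S_{k+1}\psi(x_{k+1})-S_k\tfrac{\eps}{2}+\tfrac{1+S_k\mu}{2}\|z_{k+1}-z_k\|^2 ,
\]
which is exactly (\ref{e.phik4}) after using $x_{k+1}-y_k=\alpha_k(z_{k+1}-z_k)$.

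\textbf{Where the line search enters.} At the point where the original proof invoked (\ref{e.norm2}) with $\delta=\alpha_k\eps$, I instead invoke the hypothesis (\ref{e.blin}), which gives
\[
f(y_k)+\langle\nabla f(y_k),x_{k+1}-y_k\rangle \geq f(x_{k+1})-\frac{L_{k+1}}{2}\|x_{k+1}-y_k\|^2-\frac{\alpha_k\eps}{2}.
\]
Substituting this into (\ref{e.phik4}), using $\|z_{k+1}-z_k\|^2=\alpha_k^{-2}\|x_{k+1}-y_k\|^2$ and $\alpha_k=s_{k+1}/S_{k+1}$ exactly as in the original proof, yields
\[
\phi_{k+1}^* \geq S_{k+1}\Bigl(h(x_{k+1})-\frac{\eps}{2}\Bigr)+\frac{1}{2}\frac{S_{k+1}}{s_{k+1}^2}\Bigl((1+S_k\mu)S_{k+1}-s_{k+1}^2 L_{k+1}\Bigr)\|x_{k+1}-y_k\|^2 .
\]
Hence the condition $s_{k+1}^2 L_{k+1}=(1+S_k\mu)S_{k+1}$ makes the last term vanish and (\ref{e.inqks}) follows. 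The induction is anchored at $k=0$ by $S_0=0$, precisely as before.

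\textbf{Remarks on obstacles.} There is essentially no new difficulty here; the proof is a transcription of Theorem~\ref{t.alg1}'s proof with two cosmetic changes: (a) $\widehat L_{k+1}\rightsquigarrow L_{k+1}$, and (b) the single appeal to Proposition~\ref{p.norm2} is replaced by the assumed descent inequality (\ref{e.blin}). The only point that deserves a word of care is that one must check the hypotheses of all the auxiliary facts still apply: $\phi_k$ being $(1+S_k\mu)$-strongly convex (which needs only $\psi$ being $\mu_p$-strongly convex and the $\mu_f$-terms in $q_i$), the convexity of $f$ for (\ref{e.inqks1}), and the convexity of $\psi$ for (\ref{e.psiin}) --- none of which touch $L_{k+1}$. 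The requirement $L_{k+1}\geq\widetilde L>0$ is needed only for the line search to terminate (by Proposition~\ref{p.norm2}, any $L_{k+1}\ge\widetilde L$ makes (\ref{e.blin}) satisfiable), and so it plays no role in the inequality chain itself; I would mention this explicitly so the reader sees the logical dependence. Thus the proof is short: set up the induction, copy (\ref{e.phik11})--(\ref{e.phik4}), apply (\ref{e.blin}), and impose $s_{k+1}^2L_{k+1}=(1+S_k\mu)S_{k+1}$.
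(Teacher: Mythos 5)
Your proposal is correct and follows exactly the paper's own argument: the paper likewise notes that (\ref{e.phik4}) remains valid from the proof of Theorem \ref{t.alg1}, substitutes the line-search inequality (\ref{e.blin}) in place of the appeal to (\ref{e.norm2}), and closes with the condition $s_{k+1}^2 L_{k+1}=(1+S_k\mu)S_{k+1}$. Your added remark that $L_{k+1}\geq\widetilde L$ matters only for termination of the line search (via Proposition \ref{p.weld}), not for the inequality chain, is a correct and useful clarification.
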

%%%%%%%%%

%%%%%%%%%%%%%
\begin{proof}
Following the proof of Theorem \ref{t.alg1}, the inequality (\ref{e.phik4})
is valid. From (\ref{e.blin}), for $\delta=\alpha_k \eps$, we obtain
\[
f(y_k)+\langle\nabla f(y_k),x_{k+1}-y_k \rangle \geq f(x_{k+1})
-\frac{L_{k+1}}{2} \|x_{k+1}-y_k\|^2-\frac{\alpha_k\eps}{2}.
\]
By this, (\ref{e.inbreg}), and (\ref{e.phik4}), we can write
\[
\begin{split}
\phi_{k+1}^* &\geq S_{k+1} f(y_k)+S_{k+1} \psi(x_{k+1})
+S_{k+1}\langle \nabla f(y_k), x_{k+1}-y_k \rangle-S_k\frac{\eps}{2}+\frac{1+S_k\mu}{2} \|z_{k+1}-z_k\|^2\\
&\geq S_{k+1} \left(f(y_k)+\langle \nabla f(y_k), x_{k+1}-y_k \rangle\right)
+S_{k+1} \psi(x_{k+1})-S_k\frac{\eps}{2}+\frac{1+S_k\mu}{2} \|z_{k+1}-z_k\|^2\\
&\geq S_{k+1} \left(f(x_{k+1})-\frac{L_{k+1}}{2} \|x_{k+1}-y_k\|^2-\frac{\eps}{2} \alpha_k \right)
+S_{k+1} \psi(x_{k+1})-S_k\frac{\eps}{2}+\frac{1+S_k\mu}{2\alpha_k^2}
\|x_{k+1}-y_k\|^2\\
&= S_{k+1} \left(h(x_{k+1})-\frac{\eps}{2} \right)
+\frac{1}{2}\frac{S_{k+1}}{s_{k+1}^2}\left( (1+S_k\mu)S_{k+1}-s_{k+1}^2 L_{k+1} \right) \|x_{k+1}-y_k\|^2.
\end{split} 
\]
Therefore, setting $s_{k+1}^2 L_{k+1}=(1+S_k\mu)S_{k+1}$ yields that 
(\ref{e.inqks}) holds. \qed
\end{proof}
%%%%%%%%%%%

To guarantee the inequality (\ref{e.blin}), we
assume $L_0>0$ and set $L_{k+1}:=\gamma_2 \gamma_1^{p_k} L_k$, for $p_k\geq 0$,
$\gamma_1>1$ and $\gamma_2<1$, such that $L_{k+1}\geq \widetilde{L}$
guaranteeing that (\ref{e.norm2}) holds for $\delta=\eps \alpha_k$. We give the
detailed results in the next proposition.

%%%%%%%%%%%%%%%%%%%%%%%%%%
\begin{prop}\label{p.weld}
Let $\{z_k\}_{k\geq 0}$, $\{y_k\}_{k\geq 0}$, and $\{x_k\}_{k\geq 0}$ be
generated by (\ref{e.zk}), (\ref{e.yk1}), and (\ref{e.xk1}), respectively. 
Let also $L_0>0$ and 
\[
\overline{L}_{k+1}:= \gamma_1^{p_k} L_k,~~~ s_{k+1}^2 L_{k+1}=(1+S_k\mu)S_{k+1},
\]
for $p_k \geq 0$. Then $s_{k+1}>0$ and for
\begin{equation}\label{e.pk}
p_k \geq \frac{1-\nu}{1+\nu} \log_{\gamma_1} \left( \frac{1-\nu}{\alpha_k\eps(1+\nu)} \right) + \frac{2}{1+\nu} \log_{\gamma_1} L_{\nu}
-\log_{\gamma_1} {L_k} 
\end{equation}
the inequality (\ref{e.blin}) is satisfied. 
\end{prop}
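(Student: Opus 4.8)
The plan is to establish the two assertions in turn; both follow quickly once Proposition~\ref{p.norm2} is invoked with the correct value of $\delta$. Throughout I take $L_{k+1}=\overline L_{k+1}=\gamma_1^{p_k}L_k$ as the quantity entering both the defining relation for $s_{k+1}$ and the inequality (\ref{e.blin}).

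\emph{Positivity of $s_{k+1}$.} Since $L_0>0$, $\gamma_1>1$ and $p_k\ge 0$, a one-line induction gives $L_k>0$ for every $k$, hence $\overline L_{k+1}>0$. Moreover $S_k\ge S_0=0$ and $\mu\ge 0$, so $1+S_k\mu\ge 1>0$. Using (\ref{e.sk}), the relation $s_{k+1}^2\overline L_{k+1}=(1+S_k\mu)S_{k+1}=(1+S_k\mu)(S_k+s_{k+1})$ is a quadratic in $s_{k+1}$ with positive leading coefficient $\overline L_{k+1}$ and nonpositive constant term $-(1+S_k\mu)S_k$; hence it has a unique nonnegative root, namely (\ref{e.skp1}) with $\widehat L_{k+1}$ replaced by $\overline L_{k+1}$, and this root is strictly positive because its numerator is at least $1+S_k\mu>0$. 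Thus $s_{k+1}>0$, and consequently $\alpha_k=s_{k+1}/S_{k+1}\in{]0,1]}$ with $\alpha_k\eps>0$, so that $\delta:=\alpha_k\eps$ is an admissible choice in Proposition~\ref{p.norm2}.

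\emph{Validity of (\ref{e.blin}).} Adding $\log_{\gamma_1}L_k$ to both sides of (\ref{e.pk}) and using that $t\mapsto\log_{\gamma_1}t$ is strictly increasing (because $\gamma_1>1$), condition (\ref{e.pk}) is equivalent to
\begin{equation*}
\overline L_{k+1}=\gamma_1^{p_k}L_k\ \ge\ \left(\frac{1-\nu}{\alpha_k\eps(1+\nu)}\right)^{\frac{1-\nu}{1+\nu}}L_{\nu}^{\frac{2}{1+\nu}}.
\end{equation*}
The right-hand side is precisely the threshold $\widetilde L$ of Proposition~\ref{p.norm2} evaluated at $\delta=\alpha_k\eps$. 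Applying that proposition with $\widehat L:=\overline L_{k+1}$ and $\delta:=\alpha_k\eps$ gives, for all $x,y\in C$,
\begin{equation*}
f(x)\le f(y)+\langle\nabla f(y),x-y\rangle+\tfrac12\overline L_{k+1}\|x-y\|^2+\tfrac{\alpha_k\eps}{2}.
\end{equation*}
Since $C$ is convex and $x_k,z_k,z_{k+1}\in C$, the convex combinations (\ref{e.yk1}) and (\ref{e.xk1}) satisfy $y_k,x_{k+1}\in C$; taking $x=x_{k+1}$ and $y=y_k$ yields exactly (\ref{e.blin}).

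The only delicate point is that (\ref{e.pk}) is not a genuine closed-form lower bound on $p_k$: through $\overline L_{k+1}=\gamma_1^{p_k}L_k$ the step $p_k$ enters $s_{k+1}$ via (\ref{e.skp1}) and hence $\alpha_k$, so both sides of the displayed inequality move with $p_k$. To see it is nonetheless satisfiable, observe that as $p_k\to\infty$ the left side grows like $\gamma_1^{p_k}$, whereas by (\ref{e.skp1}) $s_{k+1}$, and therefore $\alpha_k$, decays like $\gamma_1^{-p_k/2}$ (up to a constant), so the right side $\widetilde L$ grows only like $\gamma_1^{p_k(1-\nu)/(2(1+\nu))}$, an exponent strictly below $1$; hence the inequality — and with it (\ref{e.blin}) — holds for all sufficiently large $p_k$. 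Beyond this monotonicity bookkeeping there is no real obstacle, the remainder being direct substitution into Proposition~\ref{p.norm2}.
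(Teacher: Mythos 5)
Your proof is correct and follows essentially the same route as the paper: establish $s_{k+1}>0$ as the positive root of the quadratic $\overline L_{k+1}s_{k+1}^2-(1+S_k\mu)s_{k+1}-(1+S_k\mu)S_k=0$, then invoke Proposition~\ref{p.norm2} with $\delta=\alpha_k\eps$ and take logarithms to translate $\overline L_{k+1}\geq\widetilde L$ into (\ref{e.pk}). Your closing remark about the implicit dependence of $\alpha_k$ on $p_k$ --- and the growth-rate comparison of $\gamma_1^{p_k}$ against $\gamma_1^{p_k(1-\nu)/(2(1+\nu))}$ showing the condition is achievable for $p_k$ large --- addresses a subtlety the paper's proof passes over silently, and is precisely what is needed to justify the later claim that the backtracking loop in ASGA-2 terminates in finitely many inner iterations.
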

%%%%%%%%%%

%%%%%%%%%%%%%
\begin{proof}
By $L_0>0$ and $\overline{L}_{k+1}= \gamma_1^{p_k} L_k$, we have
$\overline{}L_{k+1}>0$. The solution of the equation
\[
\overline{L}_{k+1} s_{k+1}^2-(1+S_k\mu)s_{k+1}-(1+S_k\mu)S_k=0
\]
is given by
\begin{equation}\label{e.skp2}
s_{k+1}=\frac{1+S_k\mu+((1+S_k\mu)^2+4\overline{L}_{k+1}S_k(1+S_k\mu))^{1/2}}{2\overline{L}_{k+1}}>0.
\end{equation}
By setting $\delta:=\alpha_k \eps$, Proposition \ref{p.norm2} suggests
that if $\overline{L}_{k+1}= \gamma_1^{p_k} L_k \geq \widetilde{L}$, then (\ref{e.blin}) is valid leading to
\[
\overline{L}_{k+1} = \gamma_1^{p_k} L_k\geq \left( \frac{1-\nu}{\delta(1+\nu)} \right)^{\frac{1-\nu}{1+\nu}} L_{\nu}^{\frac{2}{1+\nu}}.
\] 
This implies
\[
p_k \ln \gamma_1 \geq \ln \left( \frac{1-\nu}{\alpha_k \eps(1+\nu)} \right)^{\frac{1-\nu}{1+\nu}} + \ln \left(\frac{L_{\nu}^{\frac{2}{1+\nu}}}{L_k}\right),
\] 
giving (\ref{e.pk}). \qed
\end{proof}
%%%%%%%%%%%

Theorem \ref{t.alg} leads to a simple iterative scheme for solving the problem
(\ref{e.gfun}), where the sequences $\{z_k\}_{k\geq 0}$, $\{y_k\}_{k\geq 0}$,
and $\{x_k\}_{k\geq 0}$ are generated by (\ref{e.zk}), (\ref{e.yk1}), and
(\ref{e.xk1}), respectively. Proposition \ref{p.weld} shows that the condition
(\ref{e.blin}) holds in finite iterations of a backtracking line search. We
summarize the above-mentioned discussion in the following
algorithm:

\vspace{5mm}
%%%%%%%%%%%%%%%%%%%%
\begin{algorithm}[H] 
\DontPrintSemicolon % Some LaTeX compilers require you to use \dontprintsemicolon instead
\KwIn{ initial point $x_0\in C$, $L_0>0$,~$\gamma_1>1$,~$\gamma_2<1$ ~$p=0$,~$\mu\geq 0$;~$\eps>0$;}
\KwOut{$x_k$,~ $h_k$;}
\Begin{
    \While {stopping criteria do not hold}{
        \Repeat{$f(\widehat{x}_{k+1})\leq f(y_k)+
                \langle\nabla f(y_k),\widehat{x}_{k+1}-y_k\rangle+
               \frac{1}{2}\overline{L}_{k+1} \|\widehat{x}_{k+1} - y_k\|^2
               +\frac{1}{2}\alpha_k\eps$}{
               $\overline{L}_{k+1} = \gamma_1^p L_k$;~ 
               compute $s_{k+1}$ by (\ref{e.skp2});~
               $\widehat{S}_{k+1} = S_k+s_{k+1}$;~ 
               $\alpha_k = s_{k+1}/\widehat{S}_{k+1}$;\;
               $y_k = \alpha_k z_k+(1-\alpha_k) x_k$;
               compute $\widehat{z}_{k+1}$ from (\ref{e.zk}); 
               $\widehat{x}_{k+1}=\alpha_k\widehat{z}_{k+1}+(1-\alpha_k)x_k$;
               $p=p+1$;      
        }        
        $x_{k+1}=\widehat{x}_{k+1}$;~ $z_{k+1}=\widehat{z}_{k+1}$;
        $S_{k+1}=\widehat{S}_{k+1}$;~$L_{k+1}=\gamma_2\overline{L}_{k+1}$;~
        $k=k+1$;~$p=0$;\;
    }
    $h_k=h(x_k)$;
}
\caption{ {\bf ASGA-2} (parameter-free single-subproblem ASGA)}
\label{a.aga2} 
\end{algorithm}
%%%%%%%%%%%%%%%

\vspace{5mm}
ASGA-2 in each iteration needs at least a solution of the
auxiliary problem (\ref{e.zk}) until (\ref{e.blin}) holds. The
loop between Line 3 and Line 6 of ASGA-2 is called the inner cycle, and the loop
between Line 2 and Line 8 of ASGA-2 is called the outer cycle. Hence
Proposition \ref{p.weld} shows that the inner cycle is terminated in a finite
number of inner iterations. Since it is not assumed to have
\[
\langle\nabla f(y_k),x_{k+1}-y_k\rangle+ \frac{\overline{L}_{k+1}}{2} \|x_{k+1}-y_k\|^2+
\frac{\eps}{2}\alpha_k \leq 0,
\]
one cannot guarantee the descent condition $f(x_{k+1})\leq f(x_k)$, i.e.,
$h(x_{k+1})\leq h(x_k)$ is not guaranteed. Therefore, the line search  (\ref{e.blin}) is
nonmonotone (see more about nonmonotone line searches in \cite{AhoG,AmiAN} and
references therein). 

We compute the total number of calls of the first-order oracle after $k$ iteration ($N(k)$) for ASGA-2 in the subsequent result.

%%%%%%%%%%%%%%%%%%%%%%%%%%%
\begin{prop}\label{p.tnoc}
Let $\{x_k\}_{k\geq 0}$ be generated by ASGA-2. Then 
\begin{equation}\label{e.nk}
N(k) \leq 2 \left( 1-\frac{\ln \gamma_2}{\ln \gamma_1} \right) (k+1)+
\frac{2}{\ln \gamma_1} \ln \frac{\gamma_1\gamma_2 \wt L}{L_0}.
\end{equation}
\end{prop}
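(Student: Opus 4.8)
The plan is to count the inner trials performed by the backtracking loop of ASGA-2 and then to collapse the resulting sum by telescoping $\{L_k\}$. Let $i_k\ge 1$ be the number of passes through the inner cycle (Lines~3--6) during outer iteration $k$. Each pass costs at most two oracle evaluations — one at $y_k$, used to build $q_{k+1}(\cdot,y_k)$ and hence $\widehat z_{k+1}$ via (\ref{e.zk}), and one at $\widehat x_{k+1}$, used to test the inequality in Line~6 — so
\[
N(k)\le 2\sum_{j=0}^{k} i_j .
\]
(At the first outer step $\alpha_0=1$ and $y_0=z_0=x_0$ is unchanged across trials, so its gradient can be cached; this only lowers the count.)

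Next I would extract the recursion for $L_k$. The accepted trial at step $k$ used $\overline L_{k+1}=\gamma_1^{\,i_k-1}L_k$ — the factor $\gamma_1$ is applied once for each of the $i_k-1$ rejected trials — and Line~7 then sets $L_{k+1}=\gamma_2\overline L_{k+1}$. Hence $\ln L_{k+1}=\ln L_k+(i_k-1)\ln\gamma_1+\ln\gamma_2$, and summing over $j=0,\dots,k$ telescopes the left side:
\[
\sum_{j=0}^{k} i_j=(k+1)\Bigl(1-\frac{\ln\gamma_2}{\ln\gamma_1}\Bigr)+\frac{1}{\ln\gamma_1}\,\ln\frac{L_{k+1}}{L_0}.
\]
Since $\ln\gamma_1>0$, combining this with the bound on $N(k)$ reduces the claim to the uniform estimate $L_{k+1}\le\gamma_1\gamma_2\widetilde L$.

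This is where Proposition~\ref{p.weld} enters: with $\delta=\alpha_k\eps$, any trial having $\overline L_{k+1}\ge\widetilde L$ passes the test in Line~6. Therefore at the accepted trial either $i_k=1$ and $\overline L_{k+1}=L_k$, or $i_k\ge 2$ and the immediately preceding (rejected) trial had $\gamma_1^{\,i_k-2}L_k<\widetilde L$, whence $\overline L_{k+1}=\gamma_1\cdot\gamma_1^{\,i_k-2}L_k<\gamma_1\widetilde L$. In both cases $L_{k+1}=\gamma_2\overline L_{k+1}\le\gamma_1\gamma_2\widetilde L$, provided the bound already holds at $k=0$; under the natural initialization $L_0\le\widetilde L$ a one-line induction (using $\gamma_2<1$ on the $i_k=1$ branch) propagates it to all $k$. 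Substituting $L_{k+1}\le\gamma_1\gamma_2\widetilde L$ into the displayed identity and multiplying by $2$ gives (\ref{e.nk}).

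The main obstacle I anticipate is making $L_{k+1}\le\gamma_1\gamma_2\widetilde L$ fully rigorous, because the threshold $\widetilde L=\bigl(\tfrac{1-\nu}{\alpha_k\eps(1+\nu)}\bigr)^{(1-\nu)/(1+\nu)}L_\nu^{2/(1+\nu)}$ depends on $\alpha_k$, and within a single outer step $\alpha_k$ shrinks as $\overline L_{k+1}$ is inflated (since $s_{k+1}$ from (\ref{e.skp2}) decreases and $S_{k+1}\to S_k$): thus the sufficient condition $\overline L_{k+1}\ge\widetilde L$ for acceptance is itself a moving target. To close this one should combine Proposition~\ref{p.weld} with the finite-termination argument preceding it — along a single inner cycle $\widetilde L$ grows only like $\overline L_{k+1}^{(1-\nu)/(2(1+\nu))}$, sublinearly in $\overline L_{k+1}$, so the cycle still halts and $\overline L_{k+1}$ cannot overshoot a fixed multiple of the limiting $\widetilde L$ — and fix once and for all which $\alpha_k$ (e.g.\ an a priori lower bound over the iterations needed to reach an $\eps$-solution) enters the constant $\widetilde L$ of (\ref{e.nk}). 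The remaining points — the edge case $L_0>\widetilde L$, where a $\max$ with $1$ appears inside the logarithm, and the exact count of two evaluations per trial — are routine.
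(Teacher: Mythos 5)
Your proposal follows essentially the same route as the paper: count two oracle calls per inner pass, convert the number of passes at outer step $i$ into $\frac{1}{\ln\gamma_1}(\ln L_{i+1}-\ln L_i-\ln\gamma_2)+1$ via the update $L_{i+1}=\gamma_2\gamma_1^{p_i}L_i$, telescope the sum, and bound $L_{k+1}\le\gamma_1\gamma_2\wt L$ from the termination criterion of the backtracking loop. In fact you supply more detail than the paper on the last step (which the paper simply asserts), and your remark about the dependence of $\wt L$ on the shrinking $\alpha_k$ within an inner cycle points at a genuine looseness in the paper's own statement rather than a defect in your argument.
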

%%%%%%%%%

%%%%%%%%%%%%%
\begin{proof}
From $L_{i+1}=\gamma_2 \gamma_1^{p_i} L_i,~ i=0,\dots, k$, we obtain
\[
p_i = \frac{1}{\ln \gamma_1}(\ln L_{i+1}-\ln L_i-\ln \gamma_2).
\]
By this and $L_{k+1} \leq \gamma_1\gamma_2 \wt L$, we get
\[
\begin{split}
N(k)&= \sum_{i=0}^k (2p_i+2) = \sum_{i=0}^k 
\left(\frac{1}{\ln \gamma_1}(\ln L_{i+1}-\ln L_i-\ln \gamma_2)+2\right)\\ 
&=2 \left( 1-\frac{\ln \gamma_2}{\ln \gamma_1} \right) (k+1)+\frac{2}{\ln \gamma_1} \ln \frac{L_{k+1}}{L_0} \leq 2 \left( 1-\frac{\ln \gamma_2}{\ln \gamma_1} \right) (k+1)+\frac{2}{\ln \gamma_1} 
\ln \frac{\gamma_1\gamma_2 \wt L}{L_0},
\end{split}
\]
giving the result.\qed
\end{proof}
%%%%%%%%%%%

Proposition \ref{p.tnoc} implies that ASGA-2 on average requires at least two
calls of the first-order oracle per iteration, whereas ASGA-1 needs a single
call of the first-order oracle per iteration.

We derive the complexity of ASGA-2 in the next result that is slightly
modification of Theorem \ref{t.compl1}.

%%%%%%%%%%%%%%%%%%%%%%%%%%%
\begin{thm}\label{t.compl2}
Let $\{x_k\}_{k\geq 0}$ be generated by ASGA-2. Then \\\\
(i) If $\mu>0$, we have
\begin{equation}\label{e.comb1}
h(x_k)-h(x^*) \leq L_1 \left(1+\frac{\mu^{\frac{1+\nu}{1+3\nu}} \eps^{\frac{1-\nu}{1+3\nu}}}{2\gamma_1^{\frac{1+\nu}{1+3\nu}} L_\nu^{\frac{2}{1+3\nu}}} 
\right)^{-\frac{1+3\nu}{1+\nu}(k-1)} B_\omega(x^*,x_0) + \frac{\eps}{2}, 
\end{equation} 
where $L_1=\gamma_2 \gamma_1^{p_1}L_0$.\\
(ii) If $\mu=0$, we have
\begin{equation}\label{e.comb11}
h(x_k)-h(x^*) \leq \left(\frac{\gamma_1 
2^{\frac{1+3\nu}{1+\nu}}L_{\nu}^{\frac{2}{1+\nu}}}{\eps^{\frac{1-\nu}{1+\nu}} k^{\frac{1+3\nu}{1+\nu}}} \right) B_\omega(x^*,x_0) + \frac{\eps}{2}.
\end{equation} 
\end{thm}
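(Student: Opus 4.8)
The plan is to follow the proof of Theorem~\ref{t.compl1} almost line by line, the only substantive change being that the explicit constant $\widehat L_k$ (which in ASGA-1 required $\nu$ and $L_\nu$) is replaced by an a~priori upper bound on the backtracking output $L_k$ of ASGA-2. First I would record what the inner cycle of ASGA-2 guarantees. By Proposition~\ref{p.weld} applied with $\delta=\alpha_{k-1}\eps$, the inner test (\ref{e.blin}) holds as soon as $\overline L_k=\gamma_1^{p_{k-1}}L_{k-1}$ reaches the threshold $\widetilde L_k:=\bigl(\frac{1-\nu}{\alpha_{k-1}\eps(1+\nu)}\bigr)^{\frac{1-\nu}{1+\nu}}L_\nu^{\frac{2}{1+\nu}}$, which is exactly the expression defining $\widehat L_k$ in (\ref{e.lhat1}); since the loop stops at the first admissible $p_{k-1}$, the final $\overline L_k$ overshoots this threshold by at most one factor $\gamma_1$, so $L_k=\gamma_2\overline L_k\le\gamma_1\gamma_2\widetilde L_k\le\gamma_1\widetilde L_k$ (this is the same estimate used behind the bound $L_{k+1}\le\gamma_1\gamma_2\widetilde L$ in the proof of Proposition~\ref{p.tnoc}). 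Combined with $\bigl(\frac{1+\nu}{1-\nu}\bigr)^{\frac{1-\nu}{1+\nu}}\ge1$ this yields $1/L_k\ge\gamma_1^{-1}(\alpha_{k-1}\eps)^{\frac{1-\nu}{1+\nu}}L_\nu^{-\frac{2}{1+\nu}}$, i.e.\ precisely the role played by $1/\widehat L_k$ in Theorem~\ref{t.compl1}, up to the harmless extra factor $\gamma_1$.

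Next I would substitute this bound into the identity $s_k^2L_k=(1+S_{k-1}\mu)S_k$ with $\alpha_{k-1}=s_k/S_k$ to obtain, in analogy with (\ref{e.inskp2}),
\begin{equation*}
s_kS_k^{-\frac{2\nu}{1+3\nu}}\ \ge\ \Bigl(\frac{1+S_{k-1}\mu}{\gamma_1}\Bigr)^{\frac{1+\nu}{1+3\nu}}\eps^{\frac{1-\nu}{1+3\nu}}L_\nu^{-\frac{2}{1+3\nu}}.
\end{equation*}
From here the computation is verbatim that of Theorem~\ref{t.compl1}: the elementary inequality $S_k^{a}-S_{k-1}^{a}\ge\frac12 s_kS_k^{a-1}$ with $a=\frac{1+\nu}{1+3\nu}$ turns the display into a one-step recursion for $S_k^{a}$. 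In case~(i) I bound $1+S_{k-1}\mu\ge S_{k-1}\mu$, telescope from $S_1$, and use that $S_1=s_1=1/L_1$ is the positive solution of $s_1^2L_1=(1+S_0\mu)S_1$ with $L_1=\gamma_2\gamma_1^{p_1}L_0$, obtaining $S_k\ge L_1^{-1}\bigl(1+\frac{\mu^{(1+\nu)/(1+3\nu)}\eps^{(1-\nu)/(1+3\nu)}}{2\gamma_1^{(1+\nu)/(1+3\nu)}L_\nu^{2/(1+3\nu)}}\bigr)^{\frac{1+3\nu}{1+\nu}(k-1)}$; in case~(ii) I set $\mu=0$, sum the increments over $i=1,\dots,k$ (recalling $S_0=0$), and get $S_k\ge k^{\frac{1+3\nu}{1+\nu}}\,2^{-\frac{1+3\nu}{1+\nu}}\gamma_1^{-1}L_\nu^{-\frac{2}{1+\nu}}\eps^{\frac{1-\nu}{1+\nu}}$. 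Inserting these lower bounds for $S_k$ into (\ref{e.comp}) produces exactly (\ref{e.comb1}) and (\ref{e.comb11}).

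The one genuinely new point relative to Theorem~\ref{t.compl1}, and the step I expect to demand the most care, is making the uniform bound $L_k\le\gamma_1\widetilde L_k$ of the first paragraph precise. One must treat two branches of the inner cycle: when backtracking actually occurs, minimality of $p_{k-1}$ together with the sufficiency threshold of Proposition~\ref{p.weld} forces $\gamma_1^{p_{k-1}-1}L_{k-1}<\widetilde L_k$, hence $\overline L_k<\gamma_1\widetilde L_k$; when no backtracking occurs one has $p_{k-1}=0$ and $L_k=\gamma_2L_{k-1}$, so the bound has to be propagated from the previous iteration, and it is precisely $\gamma_2<1$ that prevents it from degrading. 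Everything downstream of this observation is then the routine algebra already carried out for Theorems~\ref{t.alg1}--\ref{t.compl1}.
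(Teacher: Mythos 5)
Your proposal is correct and follows essentially the same route as the paper: bound the accepted backtracking constant by $L_k\le\gamma_1\widetilde L_k$ via minimality of $p_{k-1}$, which injects a single factor $\gamma_1^{-(1+\nu)/(1+3\nu)}$ into the inequality $s_kS_k^{-2\nu/(1+3\nu)}\ge\cdots$, and then repeat the telescoping argument of Theorem~\ref{t.compl1} starting from $S_1=L_1^{-1}$. Your third paragraph is in fact slightly more careful than the paper, which asserts $\gamma_1^{p_k-1}L_{k-1}\le\widetilde L_k$ without separately addressing the no-backtracking branch $p_{k-1}=0$.
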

%%%%%%%%%

%%%%%%%%%%%%%
\begin{proof}
(i) From Propositions \ref{p.norm2} and \ref{p.weld}, we obtain 
\[
\frac{1}{\gamma_1}L_k=\gamma_1^{p_k-1} L_{k-1} \leq \left( \frac{1-\nu}{\eps \alpha_{k-1} (1+\nu)} \right)^{\frac{1-\nu}{1+\nu}} L_\nu^{\frac{2}{1+\nu}}\leq 
\left( \eps \alpha_{k-1} \right)^{-\frac{1-\nu}{1+\nu}} L_\nu^{\frac{2}{1+\nu}}.
\] 
By this, $s_k^2 L_k=(1+S_{k-1}\mu)S_k$, and $\alpha_{k-1}=s_k/S_k$, we get
\[
\frac{s_k^2}{S_k} = \frac{1+S_{k-1}\mu}{L_k} 
\geq \gamma_1^{-1}(1+S_{k-1}\mu)(\eps \alpha_{k-1})^{\frac{1-\nu}{1+\nu}}L_\nu^{-\frac{2}{1+\nu}},
\]
leading to
\[
s_k^2 \geq \gamma_1^{-1} (1+S_{k-1}\mu) L_\nu^{-\frac{2}{1+\nu}} (\eps s_k)^{\frac{1-\nu}{1+\nu}} S_k^{\frac{2\nu}{1+\nu}}.
\]
This implies
\begin{equation}\label{e.inskp22}
s_k S_k^{-\frac{2\nu}{1+3\nu}} \geq (1+S_{k-1}\mu)^{\frac{1+\nu}{1+3\nu}}
\eps^{\frac{1-\nu}{1+3\nu}} \gamma_1^{-\frac{1+\nu}{1+3\nu}} 
L_\nu^{-\frac{2}{1+3\nu}}.
\end{equation}
It follows from $S_{k+1} \geq S_k$ and (\ref{e.inskp2}) that
\[
\begin{split}
S_k^{\frac{1+\nu}{1+3\nu}}-S_{k-1}^{\frac{1+\nu}{1+3\nu}} &\geq 
(S_k-S_{k-1})/\left(S_k^{1-\frac{1+\nu}{1+3\nu}}-S_{k-1}^{1-\frac{1+\nu}{1+3\nu}} \right) \geq \frac{1}{2} s_k S_k^{-\frac{2\nu}{1+3\nu}}\\
&\geq 2^{-1}(1+S_{k-1}\mu)^{\frac{1+\nu}{1+3\nu}}
\eps^{\frac{1-\nu}{1+3\nu}} \gamma_1^{-\frac{1+\nu}{1+3\nu}} 
L_\nu^{-\frac{2}{1+3\nu}} \geq 2^{-1} (S_{k-1}\mu)^{\frac{1+\nu}{1+3\nu}}
\eps^{\frac{1-\nu}{1+3\nu}} \gamma_1^{-\frac{1+\nu}{1+3\nu}} 
L_\nu^{-\frac{2}{1+3\nu}}.
\end{split}
\]
Then we have
\[
S_k^{\frac{1+\nu}{1+3\nu}}\geq \left(1+2^{-1} 
\mu^{\frac{1+\nu}{1+3\nu}} \eps^{\frac{1-\nu}{1+3\nu}} \gamma_1^{-\frac{1+\nu}{1+3\nu}} 
L_\nu^{-\frac{2}{1+3\nu}}\right) S_{k-1}^{\frac{1+\nu}{1+3\nu}}.
\]
Since $S_0=0$, we have $S_1=L_1^{-1}$ leading to
\[
S_k\geq \left(1+2^{-1} 
\mu^{\frac{1+\nu}{1+3\nu}} \eps^{\frac{1-\nu}{1+3\nu}} \gamma_1^{-\frac{1+\nu}{1+3\nu}} 
L_\nu^{-\frac{2}{1+3\nu}}\right)^{\frac{1+3\nu}{1+\nu}(k-1)} L_1^{-1}.
\]
This inequality and (\ref{e.comp}) give (\ref{e.comb1}). 

(ii) Substituting $\mu=0$ into (\ref{e.inskp22}) yields 
\[
s_k S_k^{-\frac{2\nu}{1+3\nu}} \geq \eps^{\frac{1-\nu}{1+3\nu}} 
\gamma_1^{-\frac{1+\nu}{1+3\nu}} L_\nu^{-\frac{2}{1+3\nu}}.
\]
It follows from $S_k \geq S_{k-1}$ and (\ref{e.inskp2}) that
\[
S_k^{\frac{1+\nu}{1+3\nu}}-S_{k-1}^{\frac{1+\nu}{1+3\nu}} \geq 
(S_k-S_{k-1})/\left(S_k^{1-\frac{1+\nu}{1+3\nu}}-S_{k-1}^{1-\frac{1+\nu}{1+3\nu}} \right) \geq \frac{1}{2} s_k S_k^{-\frac{2\nu}{1+3\nu}}
\geq 2^{-1} \eps^{\frac{1-\nu}{1+3\nu}} 
\gamma_1^{-\frac{1+\nu}{1+3\nu}} L_\nu^{-\frac{2}{1+3\nu}}.
\]
Let us sum up this inequality for $i=0,\dots,k$, giving
\[
S_k^{\frac{1+\nu}{1+3\nu}} \geq k 2^{-1} \eps^{\frac{1-\nu}{1+3\nu}} 
\gamma_1^{-\frac{1+\nu}{1+3\nu}} L_\nu^{-\frac{2}{1+3\nu}}.
\]
leading to
\[
S_k \geq \gamma_1^{-1} k^{\frac{1+3\nu}{1+\nu}} 2^{-\frac{1+3\nu}{1+\nu}}
\eps^{\frac{1-\nu}{1+\nu}} L_\nu^{-\frac{2}{1+\nu}}.
\]
This inequality and (\ref{e.comp}) give (\ref{e.comb11}). \qed
\end{proof}
%%%%%%%%%%%

The next corollary gives the complexity of ASGA-2 for attaining an 
$\eps$-solution of the problem (\ref{e.gfun}).

%%%%%%%%%%%%%%%%%%%%%%%%%%
\begin{cor}\label{c.comp2}
Let $\{x_k\}_{k\geq 0}$ be generated by ASGA-2. Then \\
(i) If $\mu>0$, an $\eps$-solution of the problem (\ref{e.gfun}) is attained
by the complexity given in (\ref{e.compl1}) apart from some constants.\\
(ii) If $\mu=0$, an $\eps$-solution of the problem (\ref{e.gfun}) is attained
by the complexity given in (\ref{e.compl2}) apart from some constants.
\end{cor}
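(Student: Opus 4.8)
The plan is to mirror, almost verbatim, the proof of Corollary \ref{c.comp1}, the only substantive change being that the convergence estimates (\ref{e.comb2}) and (\ref{e.comb22}) of Theorem \ref{t.compl1} are replaced by their ASGA-2 counterparts (\ref{e.comb1}) and (\ref{e.comb11}) supplied by Theorem \ref{t.compl2}. For part (i) I would start from (\ref{e.comb1}), impose that the first term on the right-hand side be at most $\eps/2$ (so that $h(x_k)-h(x^*)\le\eps$), multiply through by $\eps^{(1-\nu)/(1+\nu)}$, and take logarithms. Writing $t:=\mu^{(1+\nu)/(1+3\nu)}\eps^{(1-\nu)/(1+3\nu)}/\bigl(2\gamma_1^{(1+\nu)/(1+3\nu)}L_\nu^{2/(1+3\nu)}\bigr)$ for the quantity added to $1$ in the base, and bounding $\ln(1+t)$ from below by a fixed multiple of $t$ (legitimate since $t$ is bounded: it equals a constant when $\nu=1$ and tends to $0$ like $\eps^{(1-\nu)/(1+3\nu)}$ otherwise), it suffices that $k$ exceed a quantity of the form $1+\tfrac{2(1+\nu)}{1+3\nu}\,t^{-1}\bigl(\ln A+\ln\eps^{-2/(1+\nu)}\bigr)$, with $A$ a constant independent of $\eps$. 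Substituting the definition of $t$ then reproduces (\ref{e.compl1}), up to the fixed factor $\gamma_1^{(1+\nu)/(1+3\nu)}$ and the replacement of $\widehat L_1$ by $L_1=\gamma_2\gamma_1^{p_1}L_0$, both of which are absorbed into the $\mathcal O(\cdot)$.

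For part (ii) I would set $\mu=0$ in (\ref{e.comb11}), require the first term to be $\le\eps/2$, and solve for $k$ exactly as in the proof of Corollary \ref{c.comp1}(ii); the sole difference is the extra constant factor $\gamma_1$ in the numerator, which does not affect the order, so (\ref{e.compl2}) follows up to constants.

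The one point that needs care — really the only non-routine step — is checking that the constants generated by the backtracking line search do not smuggle in $\eps$-dependence of a worse order. Concretely, by the inner-cycle stopping rule of ASGA-2 together with Proposition \ref{p.weld}, the accepted $\overline L_{k+1}=\gamma_1^{p_k}L_k$ satisfies $\widetilde L\le\overline L_{k+1}\le\gamma_1\widetilde L$ (one backtracking step past the threshold), and since $\alpha_0=s_1/S_1=1$ one gets in particular $L_1=\gamma_2\overline L_1\le\gamma_1\gamma_2\widehat L_1$ with $\widehat L_1$ as in (\ref{e.lh0}); hence $\ln L_1=\mathcal O(\ln(1/\eps))$ feeds only into the logarithmic factor already present in (\ref{e.compl1}) and leaves the polynomial part untouched. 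Once this observation is recorded, all remaining manipulations are identical to those carried out in the proof of Corollary \ref{c.comp1}, which is why the statement only claims the bounds ``apart from some constants.''
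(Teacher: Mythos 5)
Your proposal is correct and follows essentially the same route as the paper: substitute the ASGA-2 estimates (\ref{e.comb1}) and (\ref{e.comb11}) of Theorem \ref{t.compl2} into the argument of Corollary \ref{c.comp1}, observing that the extra factors $\gamma_1^{(1+\nu)/(1+3\nu)}$ and the replacement of $\widehat L_1$ by $L_1$ only change constants. Your explicit check that the line search yields $L_1\le\gamma_1\gamma_2\widehat L_1$ (and your use of the lower bound $\ln(1+t)\gtrsim t$ rather than the upper bound) is in fact slightly more careful than the paper's own write-up, but it is the same proof.
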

%%%%%%%%%

%%%%%%%%%%%%%
\begin{proof}
From $L_1 \geq \wt L$, $\alpha_0=1$, and the right hand side of (\ref{e.comb1}), we obtain
\[
2\left(\frac{1-\nu}{1+\nu}\right)^{\frac{1-\nu}{1+\nu}} 
L_{\nu}^{\frac{2}{1+\nu}} 
\left(1+\frac{\mu^{\frac{1+\nu}{1+3\nu}} \eps^{\frac{1-\nu}{1+3\nu}}}{2\gamma_1^{\frac{1+\nu}{1+3\nu}} L_\nu^{\frac{2}{1+3\nu}}} \right)^{-\frac{1+3\nu}{1+\nu}(k-1)} B_\omega(x^*,x_0) \leq \eps^{\frac{2}{1+\nu}}, 
\]
implying
\[
\begin{split}
\ln(A_2)-\ln(\eps^{\frac{2}{1+\nu}}) &\leq \frac{1+3\nu}{1+\nu}(k-1)
~\ln\left(1+2^{-1}\gamma_1^{-\frac{1+\nu}{1+3\nu}}\mu^{\frac{1+\nu}{1+3\nu}} \eps^{\frac{1-\nu}{1+3\nu}} L_\nu^{-\frac{2}{1+3\nu}}\right)\\
& \leq \frac{1+3\nu}{2(1+\nu)}~\gamma_1^{-\frac{1+\nu}{1+3\nu}} \mu^{\frac{1+\nu}{1+3\nu}} \eps^{\frac{1-\nu}{1+3\nu}} 
L_\nu^{-\frac{2}{1+3\nu}}(k-1),
\end{split}
\]
where
\[
A_2:= 2\left(\frac{1-\nu}{1+\nu}\right)^{\frac{1-\nu}{1+\nu}} 
L_{\nu}^{\frac{2}{1+\nu}} B_\omega(x^*,x_0).
\]
This leads to
\[
k \geq \frac{2(1+\nu)}{1+3\nu}~\gamma_1^{\frac{1+\nu}{1+3\nu}} \mu^{-\frac{1+\nu}{1+3\nu}} \eps^{-\frac{1-\nu}{1+3\nu}} 
L_\nu^{+\frac{2}{1+3\nu}} \left( \ln(A_2)+\ln(\eps^{-\frac{2}{1+\nu}}) \right),
\]
implying that (\ref{e.compl1}) is valid.

From (\ref{e.comb11}), we obtain
\[
\gamma_1 2^{\frac{1+3\nu}{1+\nu}} L_\nu^{\frac{2}{1+\nu}} 
k^{-\frac{1+3\nu}{1+\nu}} \eps^{-\frac{1-\nu}{1+\nu}} B_\omega(x^*,x_0) + \frac{\eps}{2} \leq \eps,
\]
leading to
\[
k \geq \gamma_1^{\frac{1+\nu}{1+3\nu}}2^{\frac{2+4\nu}{1+3\nu}} L_\nu^{\frac{2}{1+3\nu}} \eps^{-\frac{2}{1+3\nu}} B_\omega(x^*,x_0)^{\frac{1+\nu}{1+3\nu}},
\]
implying that (\ref{e.compl2}) is valid. \qed
\end{proof}
%%%%%%%%%%%

Theorems \ref{t.compl1} and \ref{t.compl2} provide the complexity of ASGA-1
and ASGA-2 for problems satisfying (\ref{e.holder}), where the same complexity
is attained apart from some constants.

%%%%%%%%%%%%%%%%%%%%%%%%%%%%%%%%%%%%%%%%%%%%%%%%%%%%%%%%%%%%%%%%%%%%%%%%%%%%%%%
%%%%%%%%%%%%%%%%%%%%%%%%%%%%%%%%%%%%%%%%%%%%%%%%%%%%%%%%%%%%%%%%%%%%%%%%%%%%%%%
\section{Double-subproblem accelerated (sub)gradient methods}
In this section we give two schemes for solving structured problems of
the form (\ref{e.gfun}) and investigate their complexity analysis, where the second one is a generalization of Nesterov's universal gradient method
\cite{NesU}. 

We generate a sequence of estimation functions $\{\phi_k(x)\}_{k \geq 0}$ that
approximate $h$ such that, for each iteration $k\geq 0$, 
\begin{equation}\label{e.ineqN}
S_k \left(h(y_k)-\frac{\eps}{2} \right) \leq \phi_k^*=\min_{x\in C} \phi_k(x),
\end{equation}
where $y_k \in V$ and $S_k$ is a scaling parameter given by (\ref{e.sk}). Let us consider the estimation sequence 
\begin{equation}\label{e.phikN}
\begin{array}{ll}
\phi_{k+1}(x):= \left\{
\begin{array}{ll}
B_\omega(x,x_0)                               
&~~ \mathrm{if}~ k=0,\vspace{3mm}\\
\phi_k(x)+ s_{k+1}\left[q_{k+1}(x,x_{k+1})+\psi(x)\right] 
&~~ \mathrm{if}~ k \in \mathbb{N},
\end{array}
\right. \vspace{3mm}\\
q_{k+1}(x,x_{k+1}):= \D f(x_{k+1})+\langle \nabla f(x_{k+1}), x-x_{k+1}\rangle
+\frac{\mu_f}{2}\|x-x_{k+1}\|^2.
\end{array}
\end{equation}
Let us define $\{v_k\}_{k\geq 0}$ as the sequence of minimizers of the estimation sequence 
$\{\phi_k\}_{k\geq 0}$, i.e.,
\begin{equation}\label{e.vk1}
v_{k+1} := \D \argmin_{x\in C}~ \phi_{k+1}(x). 
\end{equation}
The following result is necessary for providing the complexity of schemes
will be given in Section \ref{s.alg1}.

%%%%%%%%%%%%%%%%%%%%%%%%%%%%%
\begin{prop}\label{p.inphikN}
Let $\{\phi_k\}_{k\geq 0}$ be generated by (\ref{e.phikN}). Then (\ref{e.inphifk}) holds, and also if (\ref{e.ineqN}) is satisfied,
we have
\begin{equation}\label{e.compN}
h(y_k)-h(x^*) \leq \frac{B_\omega(x^*,x_0)}{S_k}+\frac{\epsilon}{2}.
\end{equation}
\end{prop}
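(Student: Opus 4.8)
The plan is to mirror the structure of the proof of Proposition~\ref{p.inphik}, since the estimation sequence \eqref{e.phikN} differs from \eqref{e.phik} only in that the linearization $q_{k+1}$ is taken at $x_{k+1}$ rather than at $y_k$, while the recursive form $\phi_{k+1}=\phi_k+s_{k+1}[q_{k+1}(\cdot)+\psi(\cdot)]$ and the base case $\phi_0=B_\omega(\cdot,x_0)$ are identical. First I would establish \eqref{e.inphifk}, i.e. $\phi_k(x)\le S_k\,h(x)+B_\omega(x,x_0)$ for all $k\ge0$, by induction on $k$. The base case $k=0$ is immediate from $S_0=0$ and $\phi_0(x)=B_\omega(x,x_0)$. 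For the inductive step, I would use the definition \eqref{e.phikN}, the induction hypothesis $\phi_k(x)\le S_k h(x)+B_\omega(x,x_0)$, and the fact that $q_{k+1}(x,x_{k+1})\le f(x)$ — which holds because $f$ is $\mu_f$-strongly convex, so $f(x)\ge f(x_{k+1})+\langle\nabla f(x_{k+1}),x-x_{k+1}\rangle+\tfrac{\mu_f}{2}\|x-x_{k+1}\|^2=q_{k+1}(x,x_{k+1})$. Adding $\psi(x)$ and using $S_{k+1}=S_k+s_{k+1}$ then gives
\[
\phi_{k+1}(x)\le S_k h(x)+B_\omega(x,x_0)+s_{k+1}\big(q_{k+1}(x,x_{k+1})+\psi(x)\big)\le S_{k+1}h(x)+B_\omega(x,x_0),
\]
which closes the induction.

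The second claim is then an immediate consequence: assuming \eqref{e.ineqN}, evaluate and minimize. From $S_k\big(h(y_k)-\tfrac{\eps}{2}\big)\le\phi_k^*=\min_{x\in C}\phi_k(x)$ and the just-proved bound \eqref{e.inphifk}, I would write
\[
h(y_k)\le\frac{\eps}{2}+\frac{1}{S_k}\min_{x\in C}\phi_k(x)\le\frac{\eps}{2}+\frac{1}{S_k}\min_{x\in C}\big(S_k h(x)+B_\omega(x,x_0)\big)\le\frac{\eps}{2}+h(x^*)+\frac{B_\omega(x^*,x_0)}{S_k},
\]
where the last inequality comes from evaluating the minimand at $x=x^*$ (rather than minimizing exactly). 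Rearranging gives \eqref{e.compN}. This is essentially verbatim the corresponding computation in Proposition~\ref{p.inphik}, with $h(x_k)$ replaced by $h(y_k)$.

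There is no real obstacle here; the only point requiring a moment's care is the verification that $q_{k+1}(x,x_{k+1})\le f(x)$, which uses $\mu_f$-strong convexity of $f$ (the same property used implicitly in \eqref{e.phik}'s analysis), and the observation that the structure of the induction is insensitive to where the linearization point sits — it only matters that $\phi_{k+1}-\phi_k$ is $s_{k+1}$ times a minorant of $h$. I would also note in passing that $S_k>0$ for $k\ge1$ (since $s_i>0$), so the division by $S_k$ in \eqref{e.compN} is legitimate for $k\ge1$; for $k=0$ the statement is vacuous as $S_0=0$, exactly as in Proposition~\ref{p.inphik}.
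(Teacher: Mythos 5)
Your proposal is correct and follows essentially the same route as the paper: induction on $k$ for \gzit{e.inphifk} using that $q_{k+1}(\cdot,x_{k+1})+\psi$ minorizes $h$ by $\mu_f$-strong convexity, then the chain $h(y_k)\le\frac{\eps}{2}+\frac{1}{S_k}\phi_k^*\le\frac{\eps}{2}+h(x^*)+\frac{B_\omega(x^*,x_0)}{S_k}$. Your remark that one should bound the minimum by evaluating at $x^*$ (rather than asserting equality, as the paper's displayed line does) is a small but harmless tightening of the same argument.
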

%%%%%%%%%%

%%%%%%%%%%%%%
\begin{proof}
The proof is given by induction on $k$. Since $S_0=0$ and 
$\phi_0(x)=B_\omega(x,x_0)$, the result is valid for $k=0$. We assume that is
true for $k$ and prove it for $k+1$. Then (\ref{e.phikN}) yields 
\[
\begin{split}
\phi_{k+1}(x)&= \phi_k(x)+ s_{k+1}\left(f(x_{k+1})+\langle \nabla f(x_{k+1}), x-x_{k+1} \rangle+\frac{\mu_f}{2}\|x-x_{k+1}\|^2+\psi(x)\right)\\
&\leq S_k h(x) + B_\omega(x,x_0)+ s_{k+1}\left(f(x_{k+1})+\langle \nabla f(x_{k+1}), x-x_{k+1} \rangle+\frac{\mu_f}{2}\|x-x_{k+1}\|^2 +\psi(x)\right)\\
&\leq S_k h(x) + B_\omega(x,x_0)+ s_{k+1} h(x) = S_{k+1} h(x) +
B_\omega(x,x_0).
\end{split}
\]

From (\ref{e.ineqN}) and (\ref{e.inphifk}), we obtain
\[
h(y_k) \leq \frac{\eps}{2}+\frac{1}{S_k} \phi_k^* \leq 
\frac{\eps}{2}+\frac{1}{S_k} \min_{x\in C} (S_k~ h(x) + B_\omega(x,x_0)) =
\frac{\eps}{2}+h(x^*)+\frac{B_\omega(x^*,x_0)}{S_k},
\]
implying (\ref{e.compN}) holds. \qed
\end{proof}
%%%%%%%%%%%

%%%%%%%%%%%%%%%%%%%%%%%%%%%%%%%%%%%%%%%%%%%%%%%%%%%%%%%%%%%%%%%%%%%%%%%%%%%%%%%
\subsection{{\bf Novel double-subproblem algorithms}} \label{s.alg1}
Here we give two new algorithms using the estimation sequence (\ref{e.phikN})
and investigate the related convergence analysis.

The following theorem shows that how the estimation sequence (\ref{e.phikN}) 
can be used to construct the sequence $\{x_k\}_{k\geq 0}$ guaranteeing
(\ref{e.ineqN}).

%%%%%%%%%%%%%%%%%%%%%%%%%%
\begin{thm}\label{t.alg1N}
Let $f$ satisfies (\ref{e.holder}) with $L_{\nu} < +\infty$, 
$\alpha_k:=(s_{k+1}/S_{k+1})$ for $s_{k+1}>0$,
the sequence $\{v_k\}_{k\geq 0}$ be generated by (\ref{e.vk1}),
and
\begin{equation}\label{e.xk1N}
x_{k+1} := (1-\alpha_k)y_k+\alpha_k v_k.
\end{equation}
Let us also define
\begin{equation}\label{e.uk1N}
u_{k+1}:=\D \argmin_{x\in C} \left\{ B(x,v_k)+s_{k+1} \left(\langle \nabla f(x_{k+1}),x \rangle+\frac{\mu_f}{2}\|x-x_{k+1}\|^2+\psi(x)\right) \right\},
\end{equation}
\begin{equation}\label{e.yk1N}
y_{k+1} := (1-\alpha_k)y_k+\alpha_k u_{k+1}.
\end{equation}
We assume that (\ref{e.norm2}) holds for $y=y_{k+1}$, $z=x_{k+1}$, 
$\delta:=\eps \alpha_k$ with $\eps>0$, and (\ref{e.lhat1}) holds.
Then we have
\begin{equation}\label{e.inqksN}
\phi_{k+1}^* \geq S_{k+1} \left(h(y_{k+1})-\frac{\eps}{2}\right),
\end{equation}
if $s_{k+1}^2 \widehat{L}_{k+1}=(1+S_k\mu)S_{k+1}$.
\end{thm}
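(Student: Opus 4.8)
The plan is to mimic the induction proof of Theorem \ref{t.alg1}, adapting it to the double-subproblem estimation sequence (\ref{e.phikN}). The base case $k=0$ is immediate since $S_0=0$. For the inductive step, the key is that $\phi_k$ is $(1+S_k\mu)$-strongly convex (by the same expansion $\phi_k(x)=B_\omega(x,x_0)+\sum_{i=1}^k s_i q_i(x,x_i)+S_k\psi(x)$ and the $\mu_p$-strong convexity of $\psi$), so that $\phi_k(x)\geq\phi_k^*+\frac{1+S_k\mu}{2}\|x-v_k\|^2$ for all $x\in C$, evaluated in particular at $x=v_{k+1}$. I would then lower-bound $\phi_{k+1}^*=\phi_{k+1}(v_{k+1})\geq\phi_k(v_{k+1})+s_{k+1}(q_{k+1}(v_{k+1},x_{k+1})+\psi(v_{k+1}))$, substitute the strong-convexity bound for $\phi_k(v_{k+1})$, and use the induction hypothesis $\phi_k^*\geq S_k(h(y_k)-\eps/2)$ together with convexity of $f$ to replace $h(y_k)$ by its linearization at $x_{k+1}$, namely $f(x_{k+1})+\langle\nabla f(x_{k+1}),y_k-x_{k+1}\rangle+\psi(y_k)$.

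The next step is the linear-algebra bookkeeping that makes the cross terms telescope. Here the crucial difference from Theorem \ref{t.alg1} is the presence of the auxiliary point $u_{k+1}$ from (\ref{e.uk1N}): one must track $S_k(y_k-x_{k+1})+s_{k+1}(v_{k+1}-x_{k+1})$ and, using $x_{k+1}=(1-\alpha_k)y_k+\alpha_k v_k$ and $\alpha_k=s_{k+1}/S_{k+1}$, simplify it to $s_{k+1}(v_{k+1}-v_k)$ — the exact analogue of (\ref{e.sim}). Then I would use convexity of $\psi$ with the split $S_k\psi(y_k)+s_{k+1}\psi(y_{k+1})\geq\ldots$; but note the subtlety that $y_{k+1}=(1-\alpha_k)y_k+\alpha_k u_{k+1}$ involves $u_{k+1}$, not $v_{k+1}$, so I expect the quadratic $\mu_f$-terms and the $\psi$ split must be handled via the optimality condition for (\ref{e.uk1N}) rather than naively. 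Concretely, $u_{k+1}$ is chosen precisely so that $y_{k+1}-x_{k+1}=\alpha_k(u_{k+1}-v_k)$, which should let the term $\langle\nabla f(x_{k+1}),y_{k+1}-x_{k+1}\rangle$ appear with coefficient $S_{k+1}$, parallel to (\ref{e.phik4}). This is the step I expect to be the main obstacle: getting the $u_{k+1}$-based $y_{k+1}$ to play the role that $z_{k+1}$-based $x_{k+1}$ played in Theorem \ref{t.alg1}, in particular ensuring the coefficient $\frac{1+S_k\mu}{2}\|v_{k+1}-v_k\|^2$ converts correctly to $\frac{1+S_k\mu}{2\alpha_k^2}\|y_{k+1}-x_{k+1}\|^2$ and that the $\mu_f$-quadratic contributions are nonnegative and can simply be dropped.

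Having assembled $\phi_{k+1}^*\geq S_{k+1}\big(f(x_{k+1})+\langle\nabla f(x_{k+1}),y_{k+1}-x_{k+1}\rangle+\psi(y_{k+1})\big)-S_k\frac{\eps}{2}+\frac{1+S_k\mu}{2\alpha_k^2}\|y_{k+1}-x_{k+1}\|^2$, I would invoke Proposition \ref{p.norm2} (equivalently (\ref{e.norm2})) with $y=y_{k+1}$, $x=x_{k+1}$ (matching the hypothesis's notation $y=y_{k+1},z=x_{k+1}$), $\delta=\alpha_k\eps$, and $\widehat L_{k+1}$ as in (\ref{e.lhat1}), to get $f(x_{k+1})+\langle\nabla f(x_{k+1}),y_{k+1}-x_{k+1}\rangle\geq f(y_{k+1})-\frac{\widehat L_{k+1}}{2}\|y_{k+1}-x_{k+1}\|^2-\frac{\alpha_k\eps}{2}$. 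Substituting and collecting terms gives
\[
\phi_{k+1}^*\geq S_{k+1}\Big(h(y_{k+1})-\frac{\eps}{2}\Big)+\frac{1}{2}\frac{S_{k+1}}{s_{k+1}^2}\big((1+S_k\mu)S_{k+1}-s_{k+1}^2\widehat L_{k+1}\big)\|y_{k+1}-x_{k+1}\|^2,
\]
exactly as in the proof of Theorem \ref{t.alg1}, so that the hypothesis $s_{k+1}^2\widehat L_{k+1}=(1+S_k\mu)S_{k+1}$ kills the last term and yields (\ref{e.inqksN}). I would finish by noting the induction is complete; the only genuinely new work relative to Theorem \ref{t.alg1} is the $u_{k+1}$/$v_{k+1}$ reconciliation in the middle paragraph, and checking that (\ref{e.uk1N})'s optimality condition gives $\|y_{k+1}-x_{k+1}\|^2\le\alpha_k^2\|v_{k+1}-v_k\|^2$ (or equality) via a nonexpansiveness-type argument for the two proximal subproblems.
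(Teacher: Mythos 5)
Your skeleton --- induction on $k$, the expansion (\ref{e.phik11N}) giving $(1+S_k\mu)$-strong convexity of $\phi_k$ around $v_k$, the telescoping identity (\ref{e.simN}), the convexity split for $\psi$, the application of (\ref{e.norm2}) with $\delta=\alpha_k\eps$, and the final cancellation via $s_{k+1}^2\widehat L_{k+1}=(1+S_k\mu)S_{k+1}$ --- is exactly the paper's, and your closing display is the one the paper reaches. The gap sits in the step you yourself flagged as the main obstacle. You propose to evaluate the lower bound at $x=v_{k+1}$ (the minimizer of $\phi_{k+1}$), which produces the terms $\langle\nabla f(x_{k+1}),v_{k+1}-v_k\rangle$, $\psi(v_{k+1})$ and $\frac{1+S_k\mu}{2}\|v_{k+1}-v_k\|^2$, and then to convert these into the $u_{k+1}$-based quantities needed to form $y_{k+1}$ via a nonexpansiveness-type estimate $\|u_{k+1}-v_k\|\le\|v_{k+1}-v_k\|$. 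This does not close. First, no such estimate is available: $v_{k+1}$ minimizes $\phi_{k+1}$, which is anchored at $x_0$ through $B_\omega(\cdot,x_0)$ and accumulates all past linearizations, while $u_{k+1}$ minimizes a one-step model anchored at $v_k$; these two subproblems are not related by any firm contraction argument. Second, even if the norm inequality held it would only repair the quadratic term. The linear and $\psi$ terms must appear \emph{exactly} at $u_{k+1}$, since the conversion to $y_{k+1}$ uses the identities $s_{k+1}\langle\nabla f(x_{k+1}),u_{k+1}-v_k\rangle=S_{k+1}\langle\nabla f(x_{k+1}),y_{k+1}-x_{k+1}\rangle$ and $S_k\psi(y_k)+s_{k+1}\psi(u_{k+1})\ge S_{k+1}\psi(y_{k+1})$ as in (\ref{e.psiinN}); there is no inequality turning $\langle\nabla f(x_{k+1}),v_{k+1}-v_k\rangle$ or $\psi(v_{k+1})$ into these (the inner product can have either sign relative to its $u_{k+1}$ counterpart).

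The paper's resolution is different and simpler: it never commits to $x=v_{k+1}$. The chain (\ref{e.phik3N}) is derived as a lower bound on $\phi_{k+1}(x)$ valid for \emph{every} $x\in C$ --- note that (\ref{e.simN}) is stated for a free $x$ --- so $\phi_{k+1}^*=\min_{x\in C}\phi_{k+1}(x)$ is bounded below by the infimum over $x$ of that lower bound, and the point at which this bound is then taken is $x=u_{k+1}$, the output of the second subproblem (\ref{e.uk1N}); producing this evaluation point is precisely why ASGA-3 needs two subproblems rather than one. With $x=u_{k+1}$ all three terms come out at $u_{k+1}$ with equality, $y_{k+1}-x_{k+1}=\alpha_k(u_{k+1}-v_k)$ converts them exactly into (\ref{e.phik4N}), and the remainder of your argument goes through verbatim. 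So the fix is local: keep everything before and after your middle paragraph, but replace the ``evaluate at $v_{k+1}$ and transfer to $u_{k+1}$ by nonexpansiveness'' step by ``the bound holds for all $x\in C$; specialize to $x=u_{k+1}$.''
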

%%%%%%%%%

%%%%%%%%%%%%%
\begin{proof}
The proof is given by induction. Since $S_0=0$, the result for $k=0$ is
evident. We assume that (\ref{e.inqksN}) holds for some $k$ and show it for
$k+1$. 

Let us expand $\phi_k$ as
\lbeq{e.phik11N}
\phi_k(x)= 
B_\omega(x,x_0)+\sum_{i=1}^{k} s_i q_i(x,x_i)+S_k \psi(x).
\eeq 
Since $\psi$ is $\mu_p$-strongly convex, (\ref{e.phik11N}) implies that $\phi_k$ is $(1+S_k \mu)$-strongly convex. This and (\ref{e.phikN}) at $v_k$ yield  
\begin{equation}\label{e.eineN}
\phi_k(x)\geq \phi_k^*+\frac{1}{2}(1+S_k\mu) \|x-v_k\|^2~~~ \forall x\in C.
\end{equation}
From the induction assumption and the convexity of $f$, we obtain
\begin{equation}\label{e.inqks1N}
\phi_k^* \geq S_k\left(h(y_k)-\frac{\eps}{2} \right) \geq 
S_k\left(f(x_{k+1})+\langle \nabla f(x_{k+1}), y_k-x_{k+1} \rangle+\psi(y_k)-\frac{\eps}{2} \right).
\end{equation}
It follows from (\ref{e.xk1N}) that
\begin{equation}\label{e.simN}
\begin{split}
S_k(y_k-x_{k+1})+s_{k+1}(x-x_{k+1}) &= S_k y_k-S_{k+1}x_{k+1}+s_{k+1}x\\
&= S_k y_k-S_{k+1}(\alpha_k v_k+(1-\alpha_k)y_k)+s_{k+1}x
= s_{k+1}(x-v_k).
\end{split}
\end{equation}
Using this, (\ref{e.phikN}), (\ref{e.uk1N}), (\ref{e.eineN}), (\ref{e.inqks1N}), and (\ref{e.simN}), one can write
\begin{equation}\label{e.phik3N}
\begin{split}
\phi_{k+1}(x)&= \phi_k(x)+s_{k+1} \left(f(x_{k+1})+
\langle \nabla f(x_{k+1}), x-x_{k+1} \rangle 
+\frac{\mu_f}{2} \|x-x_{k+1}\|^2+\psi(x)\right)\\
&\geq \phi_k^*+\frac{1+S_k\mu}{2} \|x-v_k\|^2+ s_{k+1} \left[f(x_{k+1})+
\langle \nabla f(x_{k+1}), x-x_{k+1} \rangle +\psi(x)\right]\\
&\geq S_k\left(f(x_{k+1})+\langle \nabla f(x_{k+1}), y_k-x_{k+1} \rangle+\psi(y_k)-\frac{\eps}{2} \right)+\frac{1+S_k\mu}{2} \|x-v_k\|^2\\
&+s_{k+1}\left(f(x_{k+1})+ \langle \nabla f(x_{k+1}), x-x_{k+1} \rangle +\psi(x)\right)\\
&= S_{k+1} f(x_{k+1})+S_k \psi(y_k)+s_{k+1}\psi(x)-S_k\frac{\eps}{2}
+\frac{1+S_k\mu}{2} \|x-v_k\|^2 \\
&+\langle \nabla f(x_{k+1}), S_k(y_k-x_{k+1})+s_{k+1}(x-x_{k+1}) \rangle\\
&\geq S_{k+1} f(x_{k+1})+ s_{k+1}\langle \nabla f(x_{k+1}), x-v_k \rangle+S_k \psi(y_k)+s_{k+1}\psi(x)-S_k\frac{\eps}{2}
+\frac{1+S_k\mu}{2} \|x-v_k\|^2.
\end{split} 
\end{equation}
By the convexity of $\psi$ and (\ref{e.xk1N}), we get
\begin{equation}\label{e.psiinN}
S_k \psi(y_k)+s_{k+1}\psi(u_{k+1})=S_{k+1}(\alpha_k \psi(u_{k+1})+(1-\alpha_k)
\psi(y_k))\geq S_{k+1} \psi(y_{k+1}).
\end{equation}
The definition of $y_k$ and $x_{k+1}$ yield
\[
y_{k+1}-x_{k+1}= 
\alpha_k u_{k+1}+(1-\alpha_k)y_k-\alpha_k v_k-(1-\alpha_k)y_k
= \alpha_k (u_{k+1}-v_k).
\]
From this, (\ref{e.phik3N}), and (\ref{e.psiinN}), we obtain
\begin{equation}\label{e.phik4N}
\begin{split}
\phi_{k+1}^*&\geq S_{k+1} f(x_{k+1})+ s_{k+1}\langle \nabla f(x_{k+1}), u_{k+1}-v_k \rangle+S_k \psi(y_k)+s_{k+1}\psi(u_{k+1})\\
&-S_k\frac{\eps}{2}+\frac{1+S_k\mu}{2} \|u_{k+1}-v_k\|^2\\
&\geq S_{k+1} \left(f(x_{k+1})+\langle \nabla f(x_{k+1}), y_{k+1}-x_{k+1}\rangle +\psi(y_{k+1})\right)-S_k\frac{\eps}{2}+\frac{1+S_k\mu}{2\alpha_k^2} 
\|y_{k+1}-x_{k+1}\|^2.
\end{split} 
\end{equation}
By (\ref{e.norm2}) for $\delta=\alpha_k \eps$, we get
\[
f(x_{k+1})+\langle\nabla f(x_{k+1}),y_{k+1}-x_{k+1} \rangle \geq f(y_{k+1})
-\frac{\widehat{L}_{k+1}}{2} \|y_{k+1}-x_{k+1}\|^2-\frac{\alpha_k \eps}{2}.
\]
This and (\ref{e.phik4N}) give
\[
\begin{split}
\phi_{k+1}^* &\geq S_{k+1} \left(f(x_{k+1})+\langle \nabla f(x_{k+1}), y_{k+1}-x_{k+1}\rangle +\psi(y_{k+1})\right)-S_k\frac{\eps}{2}+\frac{1+S_k\mu}{2\alpha_k^2} 
\|y_{k+1}-x_{k+1}\|^2\\
&\geq S_{k+1} \left(f(y_{k+1})-\frac{\widehat{L}_{k+1}}{2} \|y_{k+1}-x_{k+1}\|^2-\frac{\alpha_k \eps}{2} \right)
+S_{k+1} \psi(y_{k+1})-S_k\frac{\eps}{2}+\frac{1+S_k\mu}{2\alpha_k^2}\|y_{k+1}-x_{k+1}\|^2\\
&= S_{k+1} \left(h(y_{k+1})-\frac{\eps}{2} \right)
+\frac{1}{2}\frac{S_{k+1}}{s_{k+1}^2}\left( (1+S_k\mu)S_{k+1}-s_{k+1}^2 \widehat{L}_{k+1} \right) \|y_{k+1}-x_{k+1}\|^2.
\end{split} 
\]
Therefore, setting $s_{k+1}^2 \widehat{L}_{k+1}=(1+S_k\mu)S_{k+1}$ implies
(\ref{e.inqksN}). \qed
\end{proof}
%%%%%%%%%%%

Theorem \ref{t.alg1N} leads to a simple scheme for solving problems of
the form (\ref{e.gfun}), which is summarized in the following. 

\vspace{4mm}
%%%%%%%%%%%%%%%%%%%%
\begin{algorithm}[H] 
\DontPrintSemicolon % Some LaTeX compilers require you to use \dontprintsemicolon instead
\KwIn{ initial point $x_0 \in C$,~ $\nu$,~ $L_\nu$,~$\mu\geq 0$;~ $\eps>0$;}
\KwOut{$y_k$,~ $h_k$;}
\Begin{
    \While {stopping criteria do not hold}{
        compute $\widehat{L}_{k+1}$;~
        compute $s_{k+1}$ by (\ref{e.skp1});~$S_{k+1} = S_k+s_{k+1}$;~ 
        $\alpha_k = s_{k+1}/S_{k+1}$;\;
        $x_{k+1} = \alpha_k v_k+(1-\alpha_k) y_k$;~
        compute $u_{k+1}$ from (\ref{e.uk1N});~
        $y_{k+1}=\alpha_k u_{k+1}+(1-\alpha_k)y_k$;\; 
        compute $v_{k+1}$ from (\ref{e.vk1}); ~     
        $k=k+1$;\;
    }
    $h_k=h(y_k)$;
}
\caption{ {\bf ASGA-3} (double-subproblem ASGA)}
\label{a.aga} 
\end{algorithm}
%%%%%%%%%%%%%%%

\vspace{5mm}
ASGA-3 is a simple scheme which needs only two calls of the oracle per each
iteration. Therefore, we have that $N(k)=2k$ for ASGA-3. The same as ASGA-1, in ASGA-3 it is required to compute $\widehat{L}_{k+1}$ in each step. If the parameters $\nu$ and $L_\nu$ are available, then Proposition \ref{p.lhat} shows how to compute $\wh L_{k+1}$. Although ASGA-1 and ASGA-3 share some similarities, they have some basic differences: (i) they use different estimation sequences;
(ii) while ASGA-1 needs a single solution of (\ref{e.zk}), ASGA-3 requires one
solution of (\ref{e.vk1}) (Line 4) and a single solution of (\ref{e.uk1N}) (Line 5).

The subsequent two results give the complexity of ASGA-3. In view of Theorem \ref{t.alg1N}, the proofs are the same as Theorem \ref{t.compl1} and 
Corollary \ref{c.comp1}.

%%%%%%%%%%%%%%%%%%%%%%%%%%%%
\begin{thm}\label{t.compl1N}
Let $\{y_k\}_{k\geq 0}$ be generated by ASGA-3. Then \\\\
(i) If $\mu>0$, we have
\[
h(y_k)-h(x^*) \leq \widehat{L}_1 \left(1+\frac{\mu^{\frac{1+\nu}{1+3\nu}}\eps^{\frac{1-\nu}{1+3\nu}}}{2 L_\nu^{\frac{2}{1+3\nu}}} 
\right)^{-\frac{1+3\nu}{1+\nu}(k-1)} B_\omega(x^*,x_0) + \frac{\eps}{2},
\]
where
\[
\widehat{L}_1=\left( \frac{1-\nu}{\eps(1+\nu)} \right)^{\frac{1-\nu}{1+\nu}}
L_{\nu}^{\frac{2}{1+\nu}}.
\]
(ii) If $\mu=0$, we have
\[
h(y_k)-h(x^*) \leq \left(\frac{2^{\frac{1+3\nu}{1+\nu}}L_{\nu}^{\frac{2}{1+\nu}}}{\eps^{\frac{1-\nu}{1+\nu}} k ^{\frac{1+3\nu}{1+\nu}}}
\right) B_\omega(x^*,x_0) + \frac{\eps}{2}.
\] 
\end{thm}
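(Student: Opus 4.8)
The plan is to reduce everything to a lower bound on the scaling sequence $\{S_k\}$ and then invoke Proposition~\ref{p.inphikN}, mirroring verbatim the proof of Theorem~\ref{t.compl1}. Since ASGA-3 enforces $s_{k+1}^2\widehat L_{k+1}=(1+S_k\mu)S_{k+1}$, Theorem~\ref{t.alg1N} guarantees that (\ref{e.ineqN}) holds for every $k\geq0$; hence Proposition~\ref{p.inphikN} yields $h(y_k)-h(x^*)\leq B_\omega(x^*,x_0)/S_k+\eps/2$. Thus the whole theorem follows once $S_k$ is bounded below by the reciprocals of the right-hand factors appearing in (i) and (ii).

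To bound $S_k$ from below I would copy the chain of inequalities from the proof of Theorem~\ref{t.compl1}, since it uses only the relations $s_k^2\widehat L_k=(1+S_{k-1}\mu)S_k$, $\alpha_{k-1}=s_k/S_k$, and the formula (\ref{e.lhat1}) for $\widehat L_k$ with $\delta=\eps\alpha_{k-1}$, none of which depends on which estimation sequence was used. This gives first $s_k^2/S_k=(1+S_{k-1}\mu)/\widehat L_k\geq(1+S_{k-1}\mu)(\eps\alpha_{k-1})^{(1-\nu)/(1+\nu)}L_\nu^{-2/(1+\nu)}$, hence $s_k\,S_k^{-2\nu/(1+3\nu)}\geq(1+S_{k-1}\mu)^{(1+\nu)/(1+3\nu)}\eps^{(1-\nu)/(1+3\nu)}L_\nu^{-2/(1+3\nu)}$. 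Combining this with the elementary inequality $S_k^{(1+\nu)/(1+3\nu)}-S_{k-1}^{(1+\nu)/(1+3\nu)}\geq\frac12\,s_k\,S_k^{-2\nu/(1+3\nu)}$, which uses only $S_k\geq S_{k-1}\geq0$ and concavity of $t\mapsto t^{(1+\nu)/(1+3\nu)}$, produces the desired one-step recursion for $S_k^{(1+\nu)/(1+3\nu)}$.

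For part (i) ($\mu>0$) I would iterate this recursion down to $S_1$, using $S_0=0$ together with (\ref{e.nequ}) to get $S_1=\widehat L_1^{-1}$ with $\widehat L_1$ as in (\ref{e.lh0}), obtaining $S_k\geq\widehat L_1^{-1}\bigl(1+\tfrac12\mu^{(1+\nu)/(1+3\nu)}\eps^{(1-\nu)/(1+3\nu)}L_\nu^{-2/(1+3\nu)}\bigr)^{(1+3\nu)/(1+\nu)\,(k-1)}$; substituting into the bound of Proposition~\ref{p.inphikN} gives the claimed estimate. For part (ii) ($\mu=0$) the recursion collapses to $S_k^{(1+\nu)/(1+3\nu)}-S_{k-1}^{(1+\nu)/(1+3\nu)}\geq\tfrac12\eps^{(1-\nu)/(1+3\nu)}L_\nu^{-2/(1+3\nu)}$; summing over $i=1,\dots,k$ and raising the result to the power $(1+3\nu)/(1+\nu)$ yields $S_k\geq k^{(1+3\nu)/(1+\nu)}\,2^{-(1+3\nu)/(1+\nu)}\,\eps^{(1-\nu)/(1+\nu)}\,L_\nu^{-2/(1+\nu)}$, and Proposition~\ref{p.inphikN} again finishes the proof. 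The only points requiring a moment's care — and hence the main, rather mild, obstacle — are that the estimation-sequence bound is now in terms of $h(y_k)$ rather than $h(x_k)$, and that one must re-check the initialization $S_1=\widehat L_1^{-1}$; beyond that the argument is identical to that of Theorem~\ref{t.compl1}.
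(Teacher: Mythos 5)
Your proposal is correct and matches the paper exactly: the paper itself only states that ``the proofs are the same as Theorem~\ref{t.compl1} and Corollary~\ref{c.comp1}'' in view of Theorem~\ref{t.alg1N}, which is precisely the reduction you carry out. You correctly identify the only two points needing verification (the bound from Proposition~\ref{p.inphikN} is in terms of $h(y_k)$, and the initialization $S_1=\widehat L_1^{-1}$), and the chain of inequalities you transplant indeed depends only on $s_k^2\widehat L_k=(1+S_{k-1}\mu)S_k$, $\alpha_{k-1}=s_k/S_k$, and (\ref{e.lhat1}), not on which estimation sequence is used.
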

%%%%%%%%%

%%%%%%%%%%%%%%%%%%%%%%%%%%%
\begin{cor}\label{c.comp1N}
Let $\{y_k\}_{k\geq 0}$ be generated by ASGA-3. Then \\
(i) If $\mu>0$, an $\eps$-solution of the problem (\ref{e.gfun}) is attained
by the complexity given in (\ref{e.compl1}) apart from some constants.\\
(ii) If $\mu=0$, an $\eps$-solution of the problem (\ref{e.gfun}) is attained
by the complexity given in (\ref{e.compl2}) apart from some constants.
\end{cor}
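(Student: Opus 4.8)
The plan is to reduce everything to the convergence estimates already recorded in Theorem \ref{t.compl1N} and then repeat, essentially verbatim, the elementary manipulation carried out in the proof of Corollary \ref{c.comp1}. Each bound in Theorem \ref{t.compl1N} already carries an additive $\eps/2$, so an $\eps$-solution is obtained as soon as the leading term (geometric, resp.\ polynomial in $k$) is pushed below $\eps/2$.

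In the strongly convex case $\mu>0$, I would impose
\[
\widehat{L}_1 \left(1+\frac{\mu^{\frac{1+\nu}{1+3\nu}}\eps^{\frac{1-\nu}{1+3\nu}}}{2 L_\nu^{\frac{2}{1+3\nu}}}\right)^{-\frac{1+3\nu}{1+\nu}(k-1)} B_\omega(x^*,x_0) \leq \frac{\eps}{2},
\]
insert the explicit value $\widehat{L}_1=\big(\tfrac{1-\nu}{\eps(1+\nu)}\big)^{\frac{1-\nu}{1+\nu}}L_\nu^{\frac{2}{1+\nu}}$, take logarithms, and use the elementary bound relating $\ln(1+t)$ to $t$ for small $t$ (exactly as in the proof of Corollary \ref{c.comp1}) to convert the resulting lower bound on $k$ into the $\mathcal{O}(\cdot)$ estimate (\ref{e.compl1}); the factor $\ln(\eps^{-2/(1+\nu)})$ appears because $\widehat{L}_1$ scales like $\eps^{-(1-\nu)/(1+\nu)}$ while, after clearing $\widehat{L}_1$, the threshold is of order $\eps^{2/(1+\nu)}$. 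In the convex case $\mu=0$, I would require
\[
\frac{2^{\frac{1+3\nu}{1+\nu}}L_{\nu}^{\frac{2}{1+\nu}}}{\eps^{\frac{1-\nu}{1+\nu}} k ^{\frac{1+3\nu}{1+\nu}}} B_\omega(x^*,x_0) \leq \frac{\eps}{2}
\]
and solve for $k$, obtaining $k \ge 2^{\frac{2+4\nu}{1+3\nu}}L_\nu^{\frac{2}{1+3\nu}}\eps^{-\frac{2}{1+3\nu}}B_\omega(x^*,x_0)^{\frac{1+\nu}{1+3\nu}}$, i.e.\ the complexity (\ref{e.compl2}).

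There is essentially no genuine obstacle here: the estimates in Theorem \ref{t.compl1N} coincide with those of Theorem \ref{t.compl1} up to replacing the ASGA-1 iterate $x_k$ by the ASGA-3 iterate $y_k$, so the only thing to verify is that Theorem \ref{t.compl1N} is itself in force, which follows from Theorem \ref{t.alg1N} and Proposition \ref{p.inphikN} exactly as Theorem \ref{t.compl1} followed from Theorem \ref{t.alg1}. Once that is granted, every algebraic step of the proof of Corollary \ref{c.comp1} transfers word for word; the only mildly delicate point is the logarithmic reduction and keeping careful track of which constants are absorbed into the $\mathcal{O}$.
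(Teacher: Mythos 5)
Your proposal is correct and is exactly the paper's route: the paper states that, in view of Theorem \ref{t.alg1N}, the proof of Corollary \ref{c.comp1N} is identical to that of Corollary \ref{c.comp1}, i.e.\ one bounds the leading term in Theorem \ref{t.compl1N} by $\eps/2$, takes logarithms (using $\ln(1+t)\le t$) in the case $\mu>0$, and solves for $k$ in the case $\mu=0$. Your observation about where the $\ln(\eps^{-2/(1+\nu)})$ factor comes from matches the paper's computation.
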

%%%%%%%%%

In the following  we give a version ASGA-3 which does not need to know about
the parameters $\nu$ and $L_\nu$ using a backtracking line search guaranteeing
(\ref{e.norm2}). We describe the new scheme in the next result.

%%%%%%%%%%%%%%%%%%%%%%%%%
\begin{thm}\label{t.algN}
Let $f$ satisfies (\ref{e.holder}) with $L_{\nu} < +\infty$, 
$\alpha_k:=(s_{k+1}/S_{k+1})$ for $s_{k+1}>0$,
the sequences $\{v_k\}_{k\geq 0}$, $\{x_k\}_{k\geq 0}$, $\{u_k\}_{k\geq 0}$, 
and $\{y_k\}_{k\geq 0}$ be generated by (\ref{e.vk1}), (\ref{e.xk1N}),
(\ref{e.uk1N}), and (\ref{e.yk1N}), respectively, 
such that
\begin{equation}\label{e.blinN}
f(y_{k+1})\leq f(x_{k+1})+ \langle\nabla f(x_{k+1}),y_{k+1}-x_{k+1}\rangle+                \frac{L_{k+1}}{2} \|y_{k+1} - x_{k+1}\|^2+\frac{\alpha_k \eps}{2},
\end{equation} 
for $L_{k+1}\geq \widetilde{L}>0$. Then (\ref{e.inqksN}) is valid if 
$s_{k+1}^2 L_{k+1}=(1+S_k \mu)S_{k+1}$.
\end{thm}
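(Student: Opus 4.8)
The plan is to mirror the proof of Theorem~\ref{t.alg1N} almost line for line, exploiting the fact that the H\"older smoothness assumption enters that proof only at the very last step. First I would run the same induction on $k$: the base case $k=0$ is immediate because $S_0=0$. For the inductive step, assuming (\ref{e.inqksN}) at level $k$, I would expand $\phi_k$ as in (\ref{e.phik11N}); use that $\psi$ is $\mu_p$-strongly convex to conclude $\phi_k$ is $(1+S_k\mu)$-strongly convex, hence (\ref{e.eineN}) holds at its minimizer $v_k$; combine the induction hypothesis in the form (\ref{e.inqks1N}) with the linearization identity (\ref{e.simN}) and the definition (\ref{e.uk1N}) of $u_{k+1}$; then apply the convexity of $\psi$ together with the identity $y_{k+1}-x_{k+1}=\alpha_k(u_{k+1}-v_k)$ to obtain exactly inequality (\ref{e.phik4N}). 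None of these manipulations uses (\ref{e.holder}) beyond the (plain) convexity of $f$, so they carry over verbatim to the present hypotheses.

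The only substitution is at the final estimate. In place of Proposition~\ref{p.norm2} and inequality (\ref{e.norm2}), I would invoke the backtracking condition (\ref{e.blinN}) with the matching choice $\delta=\alpha_k\eps$, rewritten as
\[
f(x_{k+1})+\langle\nabla f(x_{k+1}),y_{k+1}-x_{k+1}\rangle \geq f(y_{k+1})-\frac{L_{k+1}}{2}\|y_{k+1}-x_{k+1}\|^2-\frac{\alpha_k\eps}{2}.
\]
Substituting this into (\ref{e.phik4N}), using $S_{k+1}\psi(y_{k+1})$ via (\ref{e.psiinN}) and the lower bound (\ref{e.inbreg}) together with $\|u_{k+1}-v_k\|^2=\|y_{k+1}-x_{k+1}\|^2/\alpha_k^2$, I would arrive at
\[
\phi_{k+1}^* \geq S_{k+1}\Big(h(y_{k+1})-\frac{\eps}{2}\Big)+\frac{1}{2}\,\frac{S_{k+1}}{s_{k+1}^2}\Big((1+S_k\mu)S_{k+1}-s_{k+1}^2 L_{k+1}\Big)\|y_{k+1}-x_{k+1}\|^2,
\]
and imposing $s_{k+1}^2 L_{k+1}=(1+S_k\mu)S_{k+1}$ annihilates the quadratic remainder, giving (\ref{e.inqksN}) and closing the induction.

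There is essentially no genuine obstacle here: the entire content is the observation that the derivation of (\ref{e.phik4N}) is smoothness-agnostic, and that (\ref{e.blinN}) plays exactly the role (\ref{e.norm2}) played in Theorem~\ref{t.alg1N}. The only point deserving a word of care is the admissibility of the step size: from $S_{k+1}=S_k+s_{k+1}$ with $S_k\geq 0$ and $s_{k+1}>0$ one gets $\alpha_k\in{]0,1]}$, which is what legitimizes the convex-combination inequality (\ref{e.psiinN}) and the rescaling $\|y_{k+1}-x_{k+1}\|^2=\alpha_k^2\|u_{k+1}-v_k\|^2$. Existence of a positive root $s_{k+1}$ of $s_{k+1}^2 L_{k+1}=(1+S_k\mu)S_{k+1}$ for $L_{k+1}>0$ is the same elementary quadratic computation already recorded in (\ref{e.skp2}), so nothing new is required.
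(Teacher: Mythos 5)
Your proposal is correct and coincides with the paper's own proof: the paper likewise observes that the derivation of (\ref{e.phik4N}) in Theorem \ref{t.alg1N} is unaffected by replacing the smoothness bound, invokes (\ref{e.blinN}) with $\delta=\alpha_k\eps$ in place of (\ref{e.norm2}), and cancels the quadratic remainder via $s_{k+1}^2 L_{k+1}=(1+S_k\mu)S_{k+1}$. Your added remarks on $\alpha_k\in{]0,1]}$ and the positive root from (\ref{e.skp2}) are accurate but not needed beyond what the paper already records.
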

%%%%%%%%%

%%%%%%%%%%%%%
\begin{proof}
Following the proof of Theorem \ref{t.alg1N}, the inequality (\ref{e.phik4N})
is valid. By (\ref{e.blinN}), for $\delta=\alpha_k \eps$, we get
\[
f(x_{k+1})+\langle\nabla f(x_{k+1}),y_{k+1}-x_{k+1} \rangle \geq f(y_{k+1})
-\frac{L_{k+1}}{2} \|y_{k+1}-x_{k+1}\|^2-\frac{\alpha_k\eps}{2}.
\]
By this and (\ref{e.phik4N}), we can write
\[
\begin{split}
\phi_{k+1}^*&\geq S_{k+1} \left(f(x_{k+1})+\langle \nabla f(x_{k+1}), y_{k+1}-x_{k+1} \rangle
+S_{k+1}\psi(y_{k+1})\right)-S_k\frac{\eps}{2}+\frac{1+S_k\mu}{2\alpha_k^2}
\|y_{k+1}-x_{k+1}\|^2\\
&\geq S_{k+1} \left(f(y_{k+1})-\frac{L_{k+1}}{2} \|y_{k+1}-x_{k+1}\|^2-
\frac{\alpha_k\eps}{2}  \right)
+S_{k+1} \psi(y_{k+1})-S_k\frac{\eps}{2}+\frac{1+S_k\mu}{2\alpha_k^2}
\|y_{k+1}-x_{k+1}\|^2\\
&= S_{k+1} \left(h(y_{k+1})-\frac{\eps}{2} \right)
+\frac{1}{2}\frac{S_{k+1}}{s_{k+1}^2}\left( (1+S_k\mu)S_{k+1}-s_{k+1}^2 L_{k+1} \right) \|y_{k+1}-x_{k+1}\|^2.
\end{split} 
\]
Therefore, setting $s_{k+1}^2 L_{k+1}=(1+S_k\mu)S_{k+1}$ yields that 
(\ref{e.inqks}) is valid. \qed
\end{proof}
%%%%%%%%%%%

In the light of Theorem \ref{t.algN} we give a simple iterative scheme for solving the problem (\ref{e.gfun}), where the sequences $\{v_k\}_{k\geq 0}$,
$\{x_k\}_{k\geq 0}$, $\{u_k\}_{k\geq 0}$, and $\{y_k\}_{k\geq 0}$ are generated
by (\ref{e.vk1}), (\ref{e.xk1N}), (\ref{e.uk1N}), and (\ref{e.yk1N}),
respectively. Proposition \ref{p.weld} shows that the condition (\ref{e.blinN})
holds if finite iterations of a backtracking line search is applied. We 
summarize the above-mentioned discussion in the subsequent algorithm:

\vspace{3mm}
%%%%%%%%%%%%%%%%%%%%
\begin{algorithm}[H] 
\DontPrintSemicolon % Some LaTeX compilers require you to use \dontprintsemicolon instead
\KwIn{ initial point $x_0\in C$,~$L_0>0$,~$\gamma_1>1$,~$\gamma_2<1$, ~$p=0$,~$\mu\geq 0$;~$\eps>0$;}
\KwOut{$y_k$,~ $h_k$;}
\Begin{
    \While {stopping criteria do not hold}{
        \Repeat{$f(y_{k+1})\leq f(x_{k+1})+
                \langle\nabla f(x_{k+1}),y_{k+1}-x_{k+1}\rangle+
               \frac{1}{2}\overline{L}_{k+1} \|y_{k+1}-x_{k+1}\|^2
               +\frac{1}{2}\alpha_k\eps$}{
               $\overline{L}_{k+1} = \gamma_1^p L_k$;~ 
               compute $s_{k+1}$ by (\ref{e.skp2});~
               $\wh S_{k+1} = S_k+s_{k+1}$;~ 
               $\alpha_k = s_{k+1}/\wh S_{k+1}$;\;
               $\wh x_{k+1} = \alpha_k v_k+(1-\alpha_k) y_k$;
               compute $u_{k+1}$ by (\ref{e.uk1N}); 
               $\wh y_{k+1}=\alpha_k u_{k+1}+(1-\alpha_k)y_k$;
               $p=p+1$;      
        }        
        $x_{k+1}=\widehat{x}_{k+1}$; $y_{k+1}=\widehat{y}_{k+1}$;
        $u_{k+1}=\widehat{u}_{k+1}$; $S_{k+1}=\widehat{S}_{k+1}$;
        $L_{k+1}=\gamma_2\overline{L}_{k+1}$; \;
        compute $v_{k+1}$ by (\ref{e.vk1}); $k=k+1$;~$p=0$;\;
    }
    $h_k=h(y_k)$;
}
\caption{ {\bf ASGA-4} (parameter-free double-subproblem ASGA)}
\label{a.uga2} 
\end{algorithm}
%%%%%%%%%%%%%%%

\vspace{3mm}
The loop between Line 3 and Line 6 of ASGA-4 is called the inner cycle and the
loop between Line 2 and Line 9 of ASGA-4 is called the outer cycle. Hence
Proposition \ref{p.weld} shows that the inner cycle is terminated in a finite
number of iterations. ASGA-4 ans ASGA-2 share some similarities; however, they
use different estimation sequences; in each iteration ASGA-2 needs some
solutions of (\ref{e.zk}), while ASGA-4 requires a single solution of
(\ref{e.vk1}) (Line 8 in the outer cycle) and some solutions of (\ref{e.uk1N}) 
(Line 5 in the inner cycle). 

The following result gives the number of oracles $N(k)$ needed after $k$
iterations of ASGA-4.

%%%%%%%%%%%%%%%%%%%%%%%%%%%
\begin{prop}\label{p.tnocN}
Let $\{y_k\}_{k\geq 0}$ be generated by ASGA-4. Then 
\[
N(k)\leq 2\left( 1-\frac{\ln \gamma_2}{\ln \gamma_1} \right) (k+1)+
\frac{2}{\ln \gamma_1} \ln \frac{\gamma_1\gamma_2 \wt L}{L_0}.
\]
\end{prop}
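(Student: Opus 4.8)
The plan is to mirror the proof of Proposition~\ref{p.tnoc} essentially verbatim, since ASGA-4 updates the Lipschitz estimate $L_k$ by exactly the same rule $L_{k+1}=\gamma_2\gamma_1^{p_k}L_k$ as ASGA-2, and the backtracking inner cycle for ASGA-4 guarantees the analogue (\ref{e.blinN}) of (\ref{e.blin}) by Proposition~\ref{p.weld}. The only structural difference is that each inner iteration of ASGA-4 costs two oracle calls (one for $\nabla f(x_{k+1})$ feeding (\ref{e.uk1N}) and one for the function/gradient evaluation checking (\ref{e.blinN})), rather than the two calls per inner step in ASGA-2 tied to evaluating (\ref{e.zk}) and the line-search test; in both cases the per-inner-step oracle count is~$2$, so the bookkeeping is identical.

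First I would record, from $L_{i+1}=\gamma_2\gamma_1^{p_i}L_i$ for $i=0,\dots,k$, the identity
\[
p_i=\frac{1}{\ln\gamma_1}\bigl(\ln L_{i+1}-\ln L_i-\ln\gamma_2\bigr),
\]
which holds because $\gamma_1>1$ so $\ln\gamma_1>0$. Then I would sum $N(k)=\sum_{i=0}^k(2p_i+2)$, using that each of the $p_i+1$ inner iterations at outer step $i$ consumes two oracle calls. The sum over $p_i$ telescopes:
\[
\sum_{i=0}^k p_i=\frac{1}{\ln\gamma_1}\bigl(\ln L_{k+1}-\ln L_0-(k+1)\ln\gamma_2\bigr),
\]
so that
\[
N(k)=2(k+1)+\frac{2}{\ln\gamma_1}\ln\frac{L_{k+1}}{L_0}-\frac{2(k+1)\ln\gamma_2}{\ln\gamma_1}
=2\Bigl(1-\frac{\ln\gamma_2}{\ln\gamma_1}\Bigr)(k+1)+\frac{2}{\ln\gamma_1}\ln\frac{L_{k+1}}{L_0}.
\]

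To close the bound I would invoke the stopping rule of the inner cycle together with Proposition~\ref{p.weld}: once $\overline{L}_{k+1}=\gamma_1^{p_k}L_k\geq\widetilde L$ the test (\ref{e.blinN}) passes, so the accepted value satisfies $\overline{L}_{k+1}\leq\gamma_1\widetilde L$ (one extra factor $\gamma_1$ from possibly overshooting in the last backtracking step), hence $L_{k+1}=\gamma_2\overline{L}_{k+1}\leq\gamma_1\gamma_2\widetilde L$. Substituting $\ln\frac{L_{k+1}}{L_0}\leq\ln\frac{\gamma_1\gamma_2\widetilde L}{L_0}$ into the displayed equality gives the claimed inequality. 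The only mild subtlety — the ``main obstacle,'' though it is minor — is justifying the per-iteration oracle count of $2$ for ASGA-4: I would argue that within the inner cycle the gradient $\nabla f(x_{k+1})$ needed for (\ref{e.uk1N}) and the evaluation of $f(y_{k+1})$ (and its gradient at $x_{k+1}$) needed to test (\ref{e.blinN}) together amount to two black-box oracle calls per inner trial, exactly as in ASGA-2, so that the counting argument of Proposition~\ref{p.tnoc} transfers unchanged.
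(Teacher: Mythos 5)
Your proposal is correct and follows essentially the same route the paper intends: the paper gives no separate proof for this proposition, relying implicitly on the argument of Proposition~\ref{p.tnoc}, and your telescoping of $p_i=\frac{1}{\ln\gamma_1}(\ln L_{i+1}-\ln L_i-\ln\gamma_2)$ together with the bound $L_{k+1}\leq\gamma_1\gamma_2\widetilde L$ reproduces that argument exactly. Your added justification of the two-oracle-calls-per-inner-trial count for ASGA-4 is a reasonable and correct piece of bookkeeping that the paper leaves implicit.
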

%%%%%%%%%

Proposition \ref{p.tnoc} implies that ASGA-4 on average requires at most two
calls of the first-order oracle per iteration, while ASGA-3 needs exactly a single call of the first-order oracle per iteration. The proofs of the
following two results are the same as Theorem \ref{t.compl2} and 
Corollary \ref{c.comp2}.
 
%%%%%%%%%%%%%%%%%%%%%%%%%%%%
\begin{thm}\label{t.compl2N}
Let $\{y_k\}_{k\geq 0}$ be generated by ASGA-4. Then \\\\
(i) If $\mu>0$, we have
\[
h(y_k)-h(x^*) \leq L_1 \left(1+\frac{\mu^{\frac{1+\nu}{1+3\nu}} \eps^{\frac{1-\nu}{1+3\nu}}}{2 \gamma_1^{\frac{1+\nu}{1+3\nu}} L_\nu^{\frac{2}{1+3\nu}}} 
\right)^{-\frac{1+3\nu}{1+\nu}(k-1)} B_\omega(x^*,x_0) + \frac{\eps}{2}, 
\]
where $L_1=2^{p_1}L_0$.\\
(ii) If $\mu=0$, we have
\[
h(y_k)-h(x^*) \leq \left(\frac{\gamma_1 2^{\frac{1+3\nu}{1+\nu}} L_{\nu}^{\frac{2}{1+\nu}}}{\eps^{\frac{1-\nu}{1+\nu}} k^{\frac{1+3\nu}{1+\nu}}}
\right) B_\omega(x^*,x_0) + \frac{\eps}{2}.
\] 
\end{thm}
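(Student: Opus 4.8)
The plan is to transcribe the proof of Theorem~\ref{t.compl2} essentially line for line, swapping the single-subproblem machinery for its double-subproblem counterpart: Proposition~\ref{p.inphikN} plays the role of Proposition~\ref{p.inphik}, Theorem~\ref{t.algN} that of Theorem~\ref{t.alg}, and Proposition~\ref{p.weld} is reused verbatim --- its proof only invokes the quadratic majorant of Proposition~\ref{p.norm2}, so it is indifferent to whether the line search is applied to the pair $(x_{k+1},y_k)$ or to the pair $(y_{k+1},x_{k+1})$. The first step is to record that ASGA-4 runs Theorem~\ref{t.algN} with the choice $s_{k+1}^2L_{k+1}=(1+S_k\mu)S_{k+1}$, and that Proposition~\ref{p.weld} guarantees the inner cycle terminates after finitely many trials with an exponent $p_k$ for which (\ref{e.blinN}) holds; hence (\ref{e.ineqN}) is valid at every outer step, and Proposition~\ref{p.inphikN} then yields $h(y_k)-h(x^*)\le B_\omega(x^*,x_0)/S_k+\eps/2$. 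The theorem therefore reduces, exactly as for ASGA-2, to lower bounds on the scaling sequence $\{S_k\}_{k\ge0}$.

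Next I would reproduce the growth estimate for $S_k$. Combining Proposition~\ref{p.norm2} (with $\delta=\eps\alpha_{k-1}$) with the backtracking rule exactly as in the proof of Theorem~\ref{t.compl2} gives $\gamma_1^{-1}L_k\le(\eps\alpha_{k-1})^{-\frac{1-\nu}{1+\nu}}L_\nu^{\frac{2}{1+\nu}}$; inserting this into $s_k^2/S_k=(1+S_{k-1}\mu)/L_k$ together with $\alpha_{k-1}=s_k/S_k$ produces exactly inequality (\ref{e.inskp22}), namely $s_kS_k^{-\frac{2\nu}{1+3\nu}}\ge(1+S_{k-1}\mu)^{\theta}\eps^{\frac{1-\nu}{1+3\nu}}\gamma_1^{-\theta}L_\nu^{-\frac{2}{1+3\nu}}$ with $\theta:=\frac{1+\nu}{1+3\nu}\in[\tfrac12,1]$. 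Using $S_k\ge S_{k-1}$ and the concavity of $t\mapsto t^{\theta}$ --- the same step as in the proof of Theorem~\ref{t.compl2} --- one gets $S_k^{\theta}-S_{k-1}^{\theta}\ge\tfrac12 s_kS_k^{-\frac{2\nu}{1+3\nu}}\ge\tfrac12(1+S_{k-1}\mu)^{\theta}\eps^{\frac{1-\nu}{1+3\nu}}\gamma_1^{-\theta}L_\nu^{-\frac{2}{1+3\nu}}$.

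For $\mu>0$ I would bound $(1+S_{k-1}\mu)^{\theta}\ge S_{k-1}^{\theta}\mu^{\theta}$ to turn the last display into $S_k^{\theta}\ge S_{k-1}^{\theta}\bigl(1+\tfrac12\mu^{\theta}\eps^{\frac{1-\nu}{1+3\nu}}\gamma_1^{-\theta}L_\nu^{-\frac{2}{1+3\nu}}\bigr)$ and iterate it down to $S_1$; since $S_0=0$, the relation $s_1^2L_1=(1+S_0\mu)S_1=S_1$ together with $S_1=s_1$ forces $s_1=S_1=L_1^{-1}$ (with $L_1$ the scaling constant output by the first outer iteration), whence $S_k\ge L_1^{-1}\bigl(1+\tfrac12\mu^{\theta}\eps^{\frac{1-\nu}{1+3\nu}}\gamma_1^{-\theta}L_\nu^{-\frac{2}{1+3\nu}}\bigr)^{(k-1)/\theta}$, and substituting into (\ref{e.compN}) gives part~(i). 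For $\mu=0$, setting $\mu=0$ in (\ref{e.inskp22}), summing the one-step recursion over $i=1,\dots,k$, and using $1/\theta=\frac{1+3\nu}{1+\nu}$ gives $S_k\ge\gamma_1^{-1}\bigl(\tfrac{k}{2}\bigr)^{1/\theta}\eps^{\frac{1-\nu}{1+\nu}}L_\nu^{-\frac{2}{1+\nu}}$, and substituting into (\ref{e.compN}) gives part~(ii).

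The only thing that genuinely needs care --- the sole respect in which this differs from copying Theorem~\ref{t.compl2} --- is checking that the four interlaced ASGA-4 sequences $v_k,x_k,u_k,y_k$ meet all the hypotheses of Theorems~\ref{t.alg1N} and~\ref{t.algN} along the trajectory: that $\alpha_k=s_{k+1}/(S_k+s_{k+1})$ lies in $(0,1]$ automatically (so the convex combinations defining $x_{k+1}$, $y_{k+1}$ and the convexity step applied to $\psi$ are legitimate, with $\alpha_0=1$), that $L_{k+1}\ge\wt L>0$ holds uniformly (from $\overline{L}_{k+1}\ge\wt L$, up to the factor $\gamma_2<1$), and that the line search (\ref{e.blinN}) is the one comparing $f(y_{k+1})$ with its majorant expanded at $x_{k+1}$ rather than the ASGA-2 version. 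Once these bookkeeping points are settled the chain of inequalities is word-for-word the ASGA-2 argument, so I do not anticipate any real obstacle beyond the clerical danger of an index slip while transcribing the recursion.
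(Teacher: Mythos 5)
Your proposal is correct and matches the paper's intent exactly: the paper itself gives no separate argument for Theorem~\ref{t.compl2N}, stating only that the proof is the same as that of Theorem~\ref{t.compl2} with Proposition~\ref{p.inphikN}, Theorem~\ref{t.algN}, and Proposition~\ref{p.weld} supplying the double-subproblem analogues, which is precisely what you carry out. Your added bookkeeping on $\alpha_k$, $L_{k+1}\ge\wt L$, and the form of the line search (\ref{e.blinN}) is a harmless (indeed welcome) elaboration of the same route.
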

%%%%%%%%%

%%%%%%%%%%%%%%%%%%%%%%%%%%%
\begin{cor}\label{c.comp2N}
Let $\{x_k\}_{k\geq 0}$ be generated by ASGA-4. Then \\
(i) If $\mu>0$, an $\eps$-solution of the problem (\ref{e.gfun}) is attained
by the complexity given in (\ref{e.compl1}) apart from some constants.\\
(ii) If $\mu=0$, an $\eps$-solution of the problem (\ref{e.gfun}) is attained
by the complexity given in (\ref{e.compl2}) apart from some constants.
\end{cor}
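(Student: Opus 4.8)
The plan is to mirror the two-part argument used for ASGA-2 in Theorem~\ref{t.compl2} and Corollary~\ref{c.comp2}, replacing the sequence $\{x_k\}$ by $\{y_k\}$ and invoking the double-subproblem machinery (Theorem~\ref{t.algN} and Proposition~\ref{p.weld}) in place of the single-subproblem one. The backbone is the estimate $h(y_k)-h(x^*)\le B_\omega(x^*,x_0)/S_k+\eps/2$ from Proposition~\ref{p.inphikN}, together with the growth bound on $S_k$ that Theorem~\ref{t.compl2N} supplies. So first I would simply record the two inequalities from Theorem~\ref{t.compl2N}: for $\mu>0$,
\[
h(y_k)-h(x^*)\le L_1\left(1+\frac{\mu^{\frac{1+\nu}{1+3\nu}}\eps^{\frac{1-\nu}{1+3\nu}}}{2\gamma_1^{\frac{1+\nu}{1+3\nu}}L_\nu^{\frac{2}{1+3\nu}}}\right)^{-\frac{1+3\nu}{1+\nu}(k-1)}B_\omega(x^*,x_0)+\frac{\eps}{2},
\]
and for $\mu=0$,
\[
h(y_k)-h(x^*)\le\left(\frac{\gamma_1 2^{\frac{1+3\nu}{1+\nu}}L_\nu^{\frac{2}{1+\nu}}}{\eps^{\frac{1-\nu}{1+\nu}}k^{\frac{1+3\nu}{1+\nu}}}\right)B_\omega(x^*,x_0)+\frac{\eps}{2}.
\]

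Next, for part~(i) ($\mu>0$), I would demand that the first term on the right be at most $\eps/2$; using $L_1\ge\wt L$ (from $\alpha_0=1$ and Proposition~\ref{p.weld}) and absorbing $\wt L$ into $L_\nu^{2/(1+\nu)}$ up to constants, this reduces to forcing
\[
\left(1+\tfrac{1}{2}\gamma_1^{-\frac{1+\nu}{1+3\nu}}\mu^{\frac{1+\nu}{1+3\nu}}\eps^{\frac{1-\nu}{1+3\nu}}L_\nu^{-\frac{2}{1+3\nu}}\right)^{-\frac{1+3\nu}{1+\nu}(k-1)}\le\frac{\eps^{\frac{2}{1+\nu}}}{A_2},
\]
with $A_2$ a constant multiple of $L_\nu^{2/(1+\nu)}B_\omega(x^*,x_0)$. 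Taking logarithms and applying $\ln(1+t)\ge t/2$ for small $t>0$ (valid since $\eps$ is small), the required iteration count becomes
\[
k\ge\frac{2(1+\nu)}{1+3\nu}\gamma_1^{\frac{1+\nu}{1+3\nu}}\mu^{-\frac{1+\nu}{1+3\nu}}\eps^{-\frac{1-\nu}{1+3\nu}}L_\nu^{\frac{2}{1+3\nu}}\left(\ln A_2+\ln\eps^{-\frac{2}{1+\nu}}\right),
\]
which is exactly the bound \gzit{e.compl1} up to the $\gamma_1$-dependent constant. For part~(ii) ($\mu=0$), setting the first term $\le\eps/2$ gives $\gamma_1 2^{(1+3\nu)/(1+\nu)}L_\nu^{2/(1+\nu)}k^{-(1+3\nu)/(1+\nu)}\eps^{-(1-\nu)/(1+\nu)}B_\omega(x^*,x_0)\le\eps/2$, i.e.
\[
k\ge\gamma_1^{\frac{1+\nu}{1+3\nu}}2^{\frac{2+4\nu}{1+3\nu}}L_\nu^{\frac{2}{1+3\nu}}\eps^{-\frac{2}{1+3\nu}}B_\omega(x^*,x_0)^{\frac{1+\nu}{1+3\nu}},
\]
matching \gzit{e.compl2} up to constants.

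The routine parts here are the algebraic manipulations of exponents; the only point that needs a word of justification is the passage from ``$h(y_k)-h(x^*)\le\eps$'' (the definition of $\eps$-solution) to bounding \emph{each} of the two summands by $\eps/2$, and the use of $\ln(1+t)\ge t/2$ for $t$ in the relevant range, both of which are already done verbatim in the proof of Corollary~\ref{c.comp2}. The main obstacle, such as it is, is merely bookkeeping: one must keep track of the extra $\gamma_1$-powers that enter through $\overline{L}_{k+1}=\gamma_1^{p_k}L_k$ and $L_{k+1}=\gamma_2\overline{L}_{k+1}$ in the line-search version, but since the statement only claims the complexity ``apart from some constants,'' these factors are harmless and the conclusion follows. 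Hence the proof reduces to: ``The proofs are the same as those of Theorem~\ref{t.compl2} and Corollary~\ref{c.comp2}, with $x_k$ replaced by $y_k$ and Theorem~\ref{t.compl2N} used in place of Theorem~\ref{t.compl2}.''
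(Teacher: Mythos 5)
Your proposal is correct and follows essentially the same route as the paper, which itself disposes of this corollary with the remark that ``the proofs \dots are the same as Theorem~\ref{t.compl2} and Corollary~\ref{c.comp2}''; you simply make explicit the substitution of $\{y_k\}$ for $\{x_k\}$ and of the bounds from Theorem~\ref{t.compl2N}, and the resulting algebra matches Corollary~\ref{c.comp2} verbatim. The only (harmless) deviation is that you justify the logarithmic step via $\ln(1+t)\ge t/2$, which is in fact a slightly more careful way of obtaining a sufficient iteration count than the paper's use of $\ln(1+t)\le t$.
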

%%%%%%%%%

We here emphasis that the Nesterov-type optimal methods do not guarantee the convergence of the sequence of iteration points in general;
however, the next result shows that the sequence $\{x_k\}_{k\geq 0}$ generated by ASGA-1 or ASGA-2 (the sequence $\{y_k\}_{k\geq 0}$ generated by ASGA-3 or ASGA-2) is convergent to $x^*$ if the objective $h$ is strictly convex and 
$x^* \in \mathrm{int}~ C$, where $\mathrm{int}~ C$ denotes the interior of $C$.

%%%%%%%%%%%%%%%%%%%%%%%%%%%%%
\begin{prop} \label{p.strict}
Let $h$ be strictly convex. Then the sequence $\{x_k\}_{k\geq 0}$ generated by ASGA-1 or ASGA-2 is convergent to $x^*$ if $x^* \in \mathrm{int}~ C$.
\end{prop}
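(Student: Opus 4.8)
The plan is to argue by compactness: keep the iterates in a bounded set, drive the objective values down to the optimal value, and let strict convexity collapse every accumulation point onto $x^{*}$.

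First I would show that $\{x_{k}\}$ is bounded. By Proposition~\ref{p.inphik}, the estimate (\ref{e.comp}) together with $S_{k}\ge S_{1}>0$ gives $h(x_{k})\le h(x^{*})+B_{\omega}(x^{*},x_{0})/S_{1}+\eps/2=:M$ for every $k$, so the whole sequence lies in the sublevel set $\{x\in C\mid h(x)\le M\}$. Assumption (H1) furnishes one nonempty bounded sublevel set of the proper lower semicontinuous convex function $h$, and it is classical that then every sublevel set of $h$ is bounded (all of them share the same recession cone $\{0\}$); hence $\{x_{k}\}$ is bounded. Since in addition $x_{k}\in C$ for all $k$ (each $x_{k}$ is a convex combination of points of the convex set $C$) and $C$ is closed, every accumulation point of $\{x_{k}\}$ lies in $C$.

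Second, I would invoke the complexity analysis to obtain $h(x_{k})\to h(x^{*})$: the growth estimates established inside the proof of Theorem~\ref{t.compl1} show $S_{k}\to\infty$ in both cases $\mu>0$ and $\mu=0$, and then (\ref{e.comp}) yields the convergence of the function values. Now pick any convergent subsequence $x_{k_{j}}\to\bar{x}$. Lower semicontinuity of $h$ gives $h(\bar{x})\le\liminf_{j}h(x_{k_{j}})=h(x^{*})$, while $\bar{x}\in C$ gives $h(\bar{x})\ge h(x^{*})$; hence $h(\bar{x})=h(x^{*})$, so $\bar{x}$ minimizes $h$ over $C$. Because $h$ is strictly convex, its minimizer over the convex set $C$ is unique, whence $\bar{x}=x^{*}$. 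A bounded sequence whose only accumulation point is $x^{*}$ converges to $x^{*}$, which is the assertion; the hypothesis $x^{*}\in\mathrm{int}\,C$ enters here through the continuity of $h$ in a neighbourhood of $x^{*}$, which legitimises passing to the limit. The very same argument, with Proposition~\ref{p.inphikN}, (\ref{e.compN}) and Theorem~\ref{t.compl1N} replacing their single-subproblem analogues, covers the sequence $\{y_{k}\}$ produced by ASGA-3 and ASGA-4.

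The step I expect to be the delicate one is the genuine convergence $h(x_{k})\to h(x^{*})$: with a fixed accuracy $\eps>0$ the bound (\ref{e.comp}) only controls the objective up to the additive slack $\eps/2$, so in that regime one reads the statement in the limit $\eps\downarrow 0$ (and it is exact in the smooth case $\nu=1$, where the slack is absent because $\delta=0$). Once function-value convergence and boundedness are in hand, uniqueness of the minimizer forced by strict convexity finishes the proof without further obstruction.
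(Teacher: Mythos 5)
Your argument is sound to the same degree as the paper's own proof, but it takes a genuinely different route. The paper argues geometrically and locally: using $x^*\in\mathrm{int}\,C$ it places a closed ball $N(x^*)=\{x\in C \mid \|x-x^*\|\le\delta\}$ inside $C$, minimizes $h$ over the boundary $\partial N(x^*)$ to define a positive threshold $\eps_\delta=h(x_\delta)-h^*$, and shows by a segment/strict-convexity contradiction that the sublevel set $\{x\in C \mid h(x)\le h^*+\eps_\delta\}$ is contained in $N(x^*)$; combined with the complexity theorems this traps the tail of the sequence in every $\delta$-ball. Your route is the soft-analysis one: boundedness of the iterates (via (H1) and the fact that a proper lower semicontinuous convex function with one nonempty bounded sublevel set has all sublevel sets bounded), convergence of the function values from $S_k\to\infty$ and (\ref{e.comp}), lower semicontinuity at accumulation points, and uniqueness of the minimizer under strict convexity. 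What each buys: the paper's construction gives an explicit localization (a finite $\kappa=\kappa(\delta)$ after which the iterates lie in the $\delta$-ball) and genuinely uses the interiority hypothesis to build the ball; your argument is shorter, needs the extra boundedness step, and in fact never really uses $x^*\in\mathrm{int}\,C$ --- lower semicontinuity plus $\bar{x}\in C$ already yields $h(\bar{x})=h(x^*)$, so your stated reason for invoking interiority ("continuity near $x^*$") is not where that hypothesis does any work; it is simply not needed in your version. Finally, the caveat you raise about the $\eps/2$ slack in (\ref{e.comp}) is real but is shared by the paper's own proof: attaining an $\eps_\delta$-solution for the arbitrarily small $\eps_\delta$ obtained by shrinking $\delta$ implicitly requires running the scheme with input accuracy $\eps<2\eps_\delta$, so both proofs establish convergence only in the regime $\eps\downarrow 0$ (or exactly when the slack is absent). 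You were right to flag this explicitly.
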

%%%%%%%%%%

%%%%%%%%%%%%%
\begin{proof}
Strict convexity of $h$ implies that (\ref{e.gfun}) has the unique minimizer $x^*$. Since $x^* \in \mathrm{int}~ C$, there exists a small $\delta>0$ such that the convex
and compact neighborhood
\[
N(x^*) := \{x \in C \mid \|x-x^*\| \leq \delta\}
\]
is included in $C$. We set $x_\delta$ as a minimizer of the problem
\lbeq{e.min}
\begin{array}{ll}
\min          &~ h(x)\\
\mathrm{s.t.} &~ x \in \partial N(x^*),
\end{array}
\eeq
where $\partial N(x^*)$ denotes the boundary of $N(x^*)$. Let us define $\eps_\delta := h(x_\delta)-h^*$ and consider the upper level set
\[
N_h(x_\delta) := \{x \in C \mid h(x) \leq h(x_\delta) =h^*+\eps_\delta\}.
\]
For given $\eps_\delta$, Theorems \ref{t.compl1} and \ref{t.compl2} show that ASGA-1 and ASGA-2 attain an $\eps_\delta$-solution of (\ref{e.gfun}) in a finite number of iterations, say $\kappa$. Hence after $\kappa$ iterations the best
point $x_b$ satisfies $h(x_b) \leq h^*+\eps_\delta$, i.e., $x_b \in N_h(x_\delta)$. It remains to show $N_h(x_\delta) \subseteq N(x^*)$. By
contradiction, we suppose that there exists 
$\widetilde{x} \in N_h(x_\delta) \setminus N(x^*)$. Since 
$\widetilde{x} \not\in N(x^*)$, we have $\|\widetilde{x}-x^*\| > \delta$.
Therefore, there exists $\lambda_0 \in {]0,1[}$ such that
\[
\|\lambda_0 \widetilde{x} + (1-\lambda_0)x^*\| = \delta.
\]
From $\lambda_0 \widetilde{x} + (1-\lambda_0)x^* \in \partial N(x^*)$,
(\ref{e.min}), $h(\widetilde{x}) \leq h(x_\delta)$, and the strictly convex property of $h$, we obtain
\[
h(x_\delta) \leq h(\lambda_0 \widetilde{x} + (1-\lambda_0)x^*) < \lambda_0 
h(\widetilde{x}) + (1-\lambda_0) h(x^*) \leq \lambda_0 h(x_\delta) + (1-\lambda_0) h(x_\delta) = h(x_\delta),
\]
which is a contradiction, i.e., $N_h(x_\delta) \subseteq N(x^*)$ implying $x_b \in N(x^*)$ giving the results. \qed
\end{proof}
%%%%%%%%%%%

Note that the same proposition can be proved for ASGA-3 or ASGA-4 if we replace the sequence $\{x_k\}_{k\geq 0}$ by the sequence $\{y_k\}_{k\geq 0}$. It is
also valid for other Nesterov-type optimal methods.

%%%%%%%%%%%%%%%%%%%%%%%%%%%%%%%%%%%%%%%%%%%%%%%%%%%%%%%%%%%%%%%%%%%%%%%%%%%%%%%
%%%%%%%%%%%%%%%%%%%%%%%%%%%%%%%%%%%%%%%%%%%%%%%%%%%%%%%%%%%%%%%%%%%%%%%%%%%%%%%
\section{Applicability of accelerated (sub)gradient methods}
In this section we discuss some important aspects of efficient implementation 
of ASGA-1, ASGA-2, ASGA-3, and ASGA-4 for solving the problem (\ref{e.gfun}).

%%%%%%%%%%%%%%%%%%%%%%%%%%%%%%%%%%%%%%%%%%%%%%%%%%%%%%%%%%%%%%%%%%%%%%%%%%%%%%%
\subsection{{\bf Solving the auxiliary problems}}
To apply ASGA-1, ASGA-2, ASGA-3, and ASGA-4 to large problems of the form
(\ref{e.gfun}), we need to solve the auxiliary problems (\ref{e.zk}),   (\ref{e.vk1}), and (\ref{e.uk1N}) efficiently. In general, these problems
cannot be solved in a closed form; on the other hand, they can be handled
efficiently if $\psi$ and $C$ are simple enough and $\omega$ is selected
appropriately. In this section we show that they can be solved in a closed form
for several $\psi$ and $C$ appearing in applications. Let us emphasis that the
following results can be used in other Nesterov-type optimal methods either by
the same solution or by slightly modifications.

In the following two results we give a simplification of the auxiliary problem
(\ref{e.zk}) for the special case $\mu_f=0$ and $\psi\equiv 0$.

%%%%%%%%%%%%%%%%%%%%%%%%%%%%
\begin{prop}\label{p.inphik}
Let $f$ be convex ($\mu_f=0$) and $\psi\equiv 0$ in (\ref{e.gfun}). Then the estimation sequence $\phi_k(z)$ (\ref{e.phik}) satisfies
\begin{equation}\label{e.ests1}
\phi_{k}(x) = \phi_{k}^* +B_\omega(x,z_{k}).
\end{equation}
Moreover, the  auxiliary problem (\ref{e.zk}) is simplified to
\begin{equation}\label{e.zk2}
z_{k+1} = \D \argmin_{x\in C} B_\omega(x,z_k)+s_{k+1}\langle \nabla f(y_k),x\rangle.
\end{equation}
\end{prop}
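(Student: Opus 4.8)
The plan is to prove the identity \eqref{e.ests1} by induction on $k$ and then read off the simplification \eqref{e.zk2} as an immediate consequence. The base case $k=0$ is trivial: since $\phi_0(x)=B_\omega(x,x_0)$ and $z_0=x_0$, we have $\phi_0^*=B_\omega(x_0,x_0)=0$, so $\phi_0(x)=0+B_\omega(x,z_0)$ holds. For the inductive step, assume $\phi_k(x)=\phi_k^*+B_\omega(x,z_k)$. In the case $\mu_f=0$ and $\psi\equiv 0$, the recursion \eqref{e.phik} becomes $\phi_{k+1}(x)=\phi_k(x)+s_{k+1}\bigl(f(y_k)+\langle\nabla f(y_k),x-y_k\rangle\bigr)$, i.e. $\phi_k$ is modified only by adding an affine function of $x$. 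Substituting the inductive hypothesis gives
\[
\phi_{k+1}(x)=\phi_k^*+B_\omega(x,z_k)+s_{k+1}\langle\nabla f(y_k),x\rangle+\text{const},
\]
where the constant collects the $x$-independent terms $s_{k+1}(f(y_k)-\langle\nabla f(y_k),y_k\rangle)$.

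The key observation is that adding an affine term to a Bregman-distance–plus-constant function again yields a Bregman-distance–plus-constant function, with a shifted center. Concretely, expanding $B_\omega(x,z_k)=\omega(x)-\omega(z_k)-\langle\nabla\omega(z_k),x-z_k\rangle$, the $x$-dependent part of $\phi_{k+1}$ is $\omega(x)-\langle\nabla\omega(z_k)-s_{k+1}\nabla f(y_k),x\rangle$. Since $z_{k+1}$ minimizes $\phi_{k+1}$ over $C$ and (by the definition \eqref{e.zk}) $\phi_{k+1}$ is differentiable with $\phi_{k+1}=\omega+(\text{affine})$ in the unconstrained-on-$C$ sense used here, the first-order optimality condition identifies $\nabla\omega(z_{k+1})=\nabla\omega(z_k)-s_{k+1}\nabla f(y_k)$ (modulo the normal cone, which I would handle exactly as in \eqref{e.fopt}–\eqref{e.opts1}). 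Plugging this relation back in and completing to a Bregman distance around $z_{k+1}$ shows $\phi_{k+1}(x)=\phi_{k+1}^*+B_\omega(x,z_{k+1})$, closing the induction.

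Once \eqref{e.ests1} is established, the simplification \eqref{e.zk2} follows by substituting the inductive hypothesis $\phi_k(x)=\phi_k^*+B_\omega(x,z_k)$ directly into $z_{k+1}=\argmin_{x\in C}\phi_{k+1}(x)=\argmin_{x\in C}\bigl(\phi_k(x)+s_{k+1}\langle\nabla f(y_k),x-y_k\rangle\bigr)$; the constants $\phi_k^*$ and $s_{k+1}(f(y_k)-\langle\nabla f(y_k),y_k\rangle)$ do not affect the argmin, giving $z_{k+1}=\argmin_{x\in C}B_\omega(x,z_k)+s_{k+1}\langle\nabla f(y_k),x\rangle$.

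The main obstacle I anticipate is the bookkeeping when $C\neq V$: the "completion of the square" step that rewrites $\omega(x)+\langle\text{affine}\rangle$ as $B_\omega(x,z_{k+1})+\text{const}$ is clean only because $\omega$ is $1$-strongly convex and the optimality condition pins down $\nabla\omega(z_{k+1})$ up to a normal-cone element; one must check that the same normal-cone element is the one appearing in the minimization of $\phi_{k+1}$, which is where strong convexity (uniqueness of the minimizer, cf. \eqref{e.eine}) is used. Apart from that, the argument is a routine induction and I would keep the algebra terse.
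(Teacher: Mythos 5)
Your proposal follows essentially the same route as the paper's proof: induction on $k$, using the first-order optimality condition for (\ref{e.zk}) to pin down $\nabla\omega(z_{k+1})$ in terms of $\nabla\omega(z_k)-s_{k+1}\nabla f(y_k)$, then completing the affine perturbation of $B_\omega(\cdot,z_k)$ to $B_\omega(\cdot,z_{k+1})$ plus a constant, after which (\ref{e.zk2}) is read off by discarding the $x$-independent terms. The normal-cone caveat you flag for $C\neq V$ is a genuine concern, but the paper's own proof simply writes the optimality condition as the equality $\nabla B_\omega(\cdot,z_{k-1})(z_k)+s_k\nabla f(y_{k-1})=0$ with no normal-cone correction, so on that point you are, if anything, more careful than the source.
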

%%%%%%%%%%

%%%%%%%%%%%%%
\begin{proof}
We first show (\ref{e.ests1}) by induction. For $k=0$, since $x_0\in C$, we have
\[
\phi_0^* = \min_{x\in C}~\phi_0(x) = \min_{x\in C}~B_\omega(x,z_0)=0,
\]
leading to $\phi_0(x)=\phi_0^*+B_\omega(x,z_0)$. We assume that is true for
$k-1$ and prove it for $k$. By substituting (\ref{e.ests1}) into (\ref{e.phik}), we get
\begin{equation}\label{e.phik1}
\phi_{k}(x)= \phi_{k-1}^* +B_\omega(x,z_{k-1})+ s_{k}(f(y_{k-1})+\langle \nabla f(y_{k-1}), x-y_{k-1} \rangle). 
\end{equation}
The first-order optimality condition of this identity gives
\[
\nabla B_\omega(\cdot,z_{k-1})(z_k)+s_{k} \nabla f(y_{k-1}) = 0,
\]
leading to 
\begin{equation}\label{e.dphik1}
\langle \nabla B_\omega(\cdot,z_{k-1})(z_k), x-z_k\rangle = 
- s_{k} \langle \nabla f(y_{k-1}), x-z_k \rangle. 
\end{equation}
Setting $x=z_{k}$ in (\ref{e.phik1}) yields
\begin{equation}\label{e.phik2}
\phi_{k}^*= \phi_{k-1}^* +B_\omega(z_{k},z_{k-1})+ s_{k}(f(y_{k-1})+
\langle \nabla f(y_{k-1}), z_{k}-y_{k-1} \rangle). 
\end{equation}
By subtracting (\ref{e.phik2}) from (\ref{e.phik1}), we get
\[
\phi_{k}(x)= \phi_{k}^*+B_\omega(x,z_{k-1})-B_\omega(z_{k},z_{k-1})+ 
s_{k} \langle \nabla f(y_{k-1}), x-z_{k} \rangle.
\]
From this and (\ref{e.dphik1}), we obtain
\[
\begin{split}
\phi_{k}(x)&= \phi_{k}^*+B_\omega(x,z_{k-1})-B_\omega(z_{k},z_{k-1})- 
\langle \nabla B_\omega(\cdot,z_{k-1})(z_k), x-z_{k}\rangle\\
&=\phi_{k}^*+\omega(x)-\omega(z_{k-1})-\langle \nabla \omega(z_{k-1}), x-z_{k-1} \rangle-\omega(z_{k})+\omega(z_{k-1})\\
&~~~+\langle \nabla \omega(z_{k-1}), z_{k}-z_{k-1} \rangle-\langle \nabla \omega(z_k)-\nabla\omega(z_{k-1}), x-z_{k}\rangle\\
&=\phi_{k}^*+\omega(x)-\omega(z_{k})-
\langle \nabla\omega(z_k), x-z_{k}\rangle
=\phi_{k}^*+B_\omega(x,z_k),
\end{split}
\]
giving (\ref{e.ests1}). 

It follows from (\ref{e.zk2}) and (\ref{e.ests1}) that
\[
\begin{split}
z_{k+1} &= \argmin_{x\in C} \phi_{k+1}(x)=  
\argmin_{x\in C} \phi_k^* +B_\omega(x,z_k)+ s_{k+1}(f(y_k)+
\langle \nabla f(y_k), x-y_k \rangle)\\
&=\argmin_{x\in C} B_\omega(x,z_k)+s_{k+1}\langle \nabla f(y_k),x\rangle,
\end{split}
\]
giving the result. \qed
\end{proof}
%%%%%%%%%%%

Let us consider the prox-function 
\begin{equation}\label{e.proxq}
\omega(x) := \frac{1}{2} \|x-x_0\|_2^2.
\end{equation}
From the definition of the Bregman distance $B_\omega(x,y)$, we obtain
\[
B_\omega(x,y) = \frac{1}{2} \|x-x_0\|_2^2+\frac{1}{2} \|y-x_0\|_2^2-
\langle x-x_0,y-x+x-x_0 \rangle =\frac{1}{2} \|x-y\|_2^2,
\]
which is the Euclidean distance. We note that using (\ref{e.phik11}) the auxiliary problem (\ref{e.zk}) with $\omega$ defined by (\ref{e.proxq}) is
strongly convex and then has an unique solution. 

We are in a position to give the solution of (\ref{e.zk2}) for convex
problems with $\psi\equiv 0$.

%%%%%%%%%%%%%%%%%%%%%%%%%%
\begin{prop}\label{p.proj}
Let $f$ be convex ($\mu_f=0$) and $\psi\equiv 0$ in (\ref{e.gfun}). Then the global minimizer $z_{k+1}$ of
(\ref{e.zk2}) satisfies
\begin{equation}\label{e.optsub1}
\nabla \omega(z_k)-\nabla \omega(z_{k+1})-s_{k+1} \nabla f(y_k) \in s_{k+1}
\partial \psi(z_{k+1}) + N_C(z_{k+1}).
\end{equation}
Moreover, if $\omega$ is given by (\ref{e.proxq}), the solution $z_{k+1}$ of
(\ref{e.zk2}) is given by
\begin{equation}\label{e.proj}
z_{k+1} = P_C(z_k-s_{k+1} \nabla f(y_k)).
\end{equation}
\end{prop}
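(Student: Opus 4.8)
The plan is to apply the general optimality condition \eqref{e.opts1} to the strongly convex auxiliary problem \eqref{e.zk2}, and then specialize to the quadratic prox-function \eqref{e.proxq}. First I would note that, under the hypotheses $\mu_f=0$ and $\psi\equiv 0$, Proposition~\ref{p.inphik} already gives that $z_{k+1}$ is the minimizer over $C$ of the strictly convex function $x\mapsto B_\omega(x,z_k)+s_{k+1}\langle\nabla f(y_k),x\rangle$, so its unique existence is not at issue. The first-order optimality condition for this problem reads
\[
0 \in \nabla\!\left(B_\omega(\cdot,z_k)\right)(z_{k+1}) + s_{k+1}\nabla f(y_k) + N_C(z_{k+1}).
\]
Since $B_\omega(x,z_k)=\omega(x)-\omega(z_k)-\langle\nabla\omega(z_k),x-z_k\rangle$ by \eqref{e.breg}, its gradient in the first argument at $z_{k+1}$ is $\nabla\omega(z_{k+1})-\nabla\omega(z_k)$. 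Substituting this and rearranging, allowing a $\partial\psi$ term on the right so the statement matches the more general display \eqref{e.optsub1} (here $\partial\psi=\{0\}$), yields exactly \eqref{e.optsub1}.

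For the second claim I would plug in $\omega(x)=\tfrac12\|x-x_0\|_2^2$, for which $\nabla\omega(x)=x-x_0$, so $\nabla\omega(z_k)-\nabla\omega(z_{k+1}) = z_k-z_{k+1}$. With $\psi\equiv 0$, condition \eqref{e.optsub1} becomes
\[
z_k - z_{k+1} - s_{k+1}\nabla f(y_k) \in N_C(z_{k+1}),
\]
i.e. $z_{k+1}-\bigl(z_k-s_{k+1}\nabla f(y_k)\bigr) \in -N_C(z_{k+1})$. This is precisely the variational characterization of the Euclidean projection of the point $z_k-s_{k+1}\nabla f(y_k)$ onto $C$: recall that, since $B_\omega$ reduces to $\tfrac12\|\cdot-\cdot\|_2^2$, the problem \eqref{e.zk2} is literally $\min_{x\in C}\tfrac12\|x-(z_k-s_{k+1}\nabla f(y_k))\|_2^2$ up to an additive constant, whose solution is $P_C(z_k-s_{k+1}\nabla f(y_k))$ by the definition \eqref{e.pro} of the orthogonal projection. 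Hence \eqref{e.proj} holds.

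The only genuine subtlety is bookkeeping with the subdifferential calculus: one must be sure the sum rule $\partial(B_\omega(\cdot,z_k)+\langle s,\cdot\rangle + \iota_C) = \nabla B_\omega(\cdot,z_k) + s + N_C$ applies, which is immediate here because $B_\omega(\cdot,z_k)$ is differentiable on $V$ and the indicator $\iota_C$ has $\partial\iota_C = N_C$; there is no constraint-qualification obstruction since $C$ is closed convex and the smooth part is finite-valued everywhere. Everything else is a direct substitution, so I expect no real obstacle beyond keeping signs consistent between the $N_C$ convention in \eqref{e.normc} and the projection characterization.
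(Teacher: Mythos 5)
Your proposal is correct and follows essentially the same route as the paper: apply the first-order optimality condition (\ref{e.fopt})/(\ref{e.opts1}) to the auxiliary problem (\ref{e.zk2}), use $\nabla B_\omega(\cdot,z_k)(z_{k+1})=\nabla\omega(z_{k+1})-\nabla\omega(z_k)$ to obtain (\ref{e.optsub1}), and then specialize to $\omega(x)=\frac{1}{2}\|x-x_0\|_2^2$ so that the inclusion $z_k-z_{k+1}-s_{k+1}\nabla f(y_k)\in N_C(z_{k+1})$ is recognized as the optimality condition of the projection problem, giving (\ref{e.proj}). Your explicit remarks on the vacuous $\partial\psi$ term (since $\psi\equiv 0$) and on the absence of constraint-qualification issues are minor additions that the paper leaves implicit.
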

%%%%%%%%%%

%%%%%%%%%%%%%
\begin{proof}
From (\ref{e.fopt}) for (\ref{e.zk2}), we obtain
\[
\begin{split}
0 &\in \nabla B_\omega(\cdot,z_k)(z_{k+1})+s_{k+1} \nabla f(y_k)+s_{k+1}
\partial \psi(z_{k+1}) + N_C(z_{k+1})\\
&=\nabla\omega(z_{k+1})-\nabla\omega(z_k)+s_{k+1} \nabla f(y_k)+s_{k+1}
\partial \psi(z_{k+1}) + N_C(z_{k+1}),
\end{split}
\] 
implying (\ref{e.optsub1}) is valid.

By $\psi\equiv 0$ and (\ref{e.opts1}), we get
\[
z_k-z_{k+1}-s_{k+1} \nabla f(y_k) \in N_C(z_{k+1}).
\]
This is the optimality condition of the problem
\[
\D \min_{z\in C}~ \frac{1}{2} \|z-(z_k-s_{k+1}\nabla f(y_k))\|_2^2,
\]
which is the orthogonal projection of $z_k-s_{k+1}\nabla f(y_k)$ onto $C$,
giving the result.\qed
\end{proof}
%%%%%%%%%%%

For $\mu_f=0$, $\psi\equiv 0$, and $\omega$ given by (\ref{e.proxq}), Proposition \ref{p.proj} implies that the auxiliary problem (\ref{e.zk}) can 
be solved efficiently if the orthogonal projection onto the convex domain $C$
is cheaply available. There are many important convex domains that the
orthogonal projection onto them is efficiently available either in a closed
form or by a simple iterative scheme (see Table 5.1 in \cite{AhoT}).

The auxiliary problems (\ref{e.zk}) and (\ref{e.vk1}) have the same structure;
in contrast, (\ref{e.zk}) involves $\{y_k\}_{k\geq 0}$ while (\ref{e.vk1})
includes $\{x_k\}_{k\geq 0}$. Therefore, we only consider (\ref{e.vk1}) in the
remainder of this section. The next result gives optimality conditions for
(\ref{e.vk1}) and (\ref{e.uk1N}).

%%%%%%%%%%%%%%%%%%%%%%%%%%
\begin{prop}\label{p.phi0}
Let $v_{k+1}$ and $u_{k+1}$ be the global minimizer of (\ref{e.vk1}) and
(\ref{e.uk1N}), respectively. Then
\begin{equation}\label{e.optsub}
\nabla\omega(x_0)-\nabla\omega(v_{k+1})-\sum_{i=1}^{k+1} s_i (\nabla f(x_i) +\mu_f (v_{k+1}-x_i))\in S_{k+1} \partial\psi(v_{k+1})+N_C(v_{k+1}),
\end{equation}
\begin{equation}\label{e.optsubn}
\nabla\omega(v_k)-\nabla\omega(u_{k+1})- s_{k+1} (\nabla f(x_{k+1}) +\mu_f (u_{k+1}-x_{k+1}))\in s_{k+1} \partial\psi(z_{k+1})+ N_C(u_{k+1}).
\end{equation}
\end{prop}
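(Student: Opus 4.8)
The plan is to derive each inclusion directly from the Fermat-type optimality condition \gzit{e.fopt}, applied to the (strongly convex) auxiliary problems \gzit{e.vk1} and \gzit{e.uk1N}, using the explicit form of the estimation sequence recorded in \gzit{e.phik11N}. First I would treat \gzit{e.optsub}. Recall from \gzit{e.phik11N} that
\[
\phi_{k+1}(x)= B_\omega(x,x_0)+\sum_{i=1}^{k+1} s_i \Bigl(f(x_i)+\langle \nabla f(x_i),x-x_i\rangle+\tfrac{\mu_f}{2}\|x-x_i\|^2\Bigr)+S_{k+1}\psi(x).
\]
Since $v_{k+1}$ is the global minimizer of $\phi_{k+1}$ over $C$, condition \gzit{e.fopt} gives $0\in \partial\phi_{k+1}(v_{k+1})+N_C(v_{k+1})$. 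The terms $f(x_i)$ are constants, the linear-plus-quadratic terms are differentiable with gradient $\nabla f(x_i)+\mu_f(v_{k+1}-x_i)$ at $x=v_{k+1}$, and $\nabla_x B_\omega(x,x_0)=\nabla\omega(x)-\nabla\omega(x_0)$. Collecting these, $0\in \nabla\omega(v_{k+1})-\nabla\omega(x_0)+\sum_{i=1}^{k+1}s_i(\nabla f(x_i)+\mu_f(v_{k+1}-x_i))+S_{k+1}\partial\psi(v_{k+1})+N_C(v_{k+1})$, which rearranges to \gzit{e.optsub}.

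For \gzit{e.optsubn} the argument is the same but applied to the objective in \gzit{e.uk1N}, namely
\[
x\mapsto B_\omega(x,v_k)+s_{k+1}\Bigl(\langle \nabla f(x_{k+1}),x\rangle+\tfrac{\mu_f}{2}\|x-x_{k+1}\|^2+\psi(x)\Bigr).
\]
This is again strongly convex (the Bregman term is $1$-strongly convex by \gzit{e.stro}, and the remaining terms are convex), so it has the unique minimizer $u_{k+1}$, and \gzit{e.fopt} applies. Differentiating: $\nabla_x B_\omega(x,v_k)=\nabla\omega(x)-\nabla\omega(v_k)$, the linear term contributes $s_{k+1}\nabla f(x_{k+1})$, the quadratic term contributes $s_{k+1}\mu_f(u_{k+1}-x_{k+1})$ at $x=u_{k+1}$, and $\psi$ contributes $s_{k+1}\partial\psi(u_{k+1})$. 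Hence $0\in \nabla\omega(u_{k+1})-\nabla\omega(v_k)+s_{k+1}(\nabla f(x_{k+1})+\mu_f(u_{k+1}-x_{k+1}))+s_{k+1}\partial\psi(u_{k+1})+N_C(u_{k+1})$, which is exactly \gzit{e.optsubn} after moving the $\omega$-gradients to the right-hand side. (The statement writes $\partial\psi(z_{k+1})$ where $\partial\psi(u_{k+1})$ is meant; I would silently correct this.)

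The only genuine subtlety — and the step I would be most careful about — is justifying the split of the subdifferential of a sum into the sum of subdifferentials, i.e.\ writing $\partial\phi_{k+1}=\nabla(\text{smooth part})+S_{k+1}\partial\psi$. This is the Moreau–Rockafellar sum rule, and it is valid here because all summands except $\psi$ are finite-valued and continuous (indeed differentiable) on all of $V$, so no constraint-qualification issue arises; the normal cone $N_C$ is handled by writing the constraint $x\in C$ as the indicator $\iota_C$, whose subdifferential is $N_C$, and invoking \gzit{e.fopt} in precisely the stated form. Everything else is routine differentiation of quadratics and Bregman distances.
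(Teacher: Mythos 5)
Your proposal is correct and follows essentially the same route as the paper's proof: apply the optimality condition \gzit{e.fopt} to the auxiliary problems, use $\nabla B_\omega(\cdot,y)(x)=\nabla\omega(x)-\nabla\omega(y)$ together with the expansion \gzit{e.phik11N}, and rearrange. Your added remarks on the Moreau--Rockafellar sum rule and on the typo $\partial\psi(z_{k+1})$ versus $\partial\psi(u_{k+1})$ are sensible refinements of the same argument, not a different approach.
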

%%%%%%%%%% 

%%%%%%%%%%%%%
\begin{proof}
From (\ref{e.breg}), we obtain $\nabla B_\omega(\cdot,x_0)(x) = \nabla\omega(x)- \nabla\omega(x_0)$. By this, (\ref{e.fopt}), and
(\ref{e.phik11N}) for the auxiliary problem (\ref{e.vk1}), we get
\[
\begin{split}
0 &\in \nabla B_\omega(\cdot,x_0)(v_{k+1})+\sum_{i=1}^{k+1} s_i (\nabla f(x_i)+\mu_f (v_{k+1}-x_i))+S_{k+1}\partial\psi(v_{k+1})
+N_C(v_{k+1})\\
&=\nabla\omega(v_{k+1})-\nabla\omega(x_0)+\sum_{i=1}^{k+1} s_i (\nabla f(x_i) +\mu_f (v_{k+1}-x_i))+S_{k+1} \partial\psi(v_{k+1})+ N_C(v_{k+1}),
\end{split}
\] 
implying (\ref{e.optsub}) is valid. 

It follows from (\ref{e.fopt}) and (\ref{e.uk1N}) that
\[
\begin{split}
0 &\in \nabla\omega(u_{k+1})-\nabla\omega(v_k)+s_{k+1} (\nabla f(x_{k+1}) +\mu_f (u_{k+1}-x_{k+1}))+s_{k+1} \partial\psi(u_{k+1})+ N_C(u_{k+1}),
\end{split}
\] 
giving (\ref{e.optsubn}). \qed
\end{proof}
%%%%%%%%%%%

We now consider a simple case of (\ref{e.gfun}) with $C=V$. We verify
the solution of the auxiliary problems (\ref{e.vk1}) and (\ref{e.uk1N}) in the following result.

%%%%%%%%%%%%%%%%%%%%%%%%%%
\begin{prop}\label{p.prox}
Let $C=V$ and $\omega$ be given by (\ref{e.proxq}). Then the solution $v_{k+1}$
of the auxiliary problem (\ref{e.vk1}) is given by
\begin{equation}\label{e.prox}
v_{k+1} = \mathrm{prox}_{\wt\lambda \psi}(\wt y),
\end{equation}
where 
\begin{equation}\label{e.wty}
\wt\lambda:=\frac{S_{k+1}}{1+\mu_f S_{k+1}},~~~\wt y:= \frac{1}{1+\mu_f S_{k+1}} \left(x_0-\sum_{i=1}^{k+1} s_i \left(\nabla f(x_i)-\mu_f x_i\right)\right).
\end{equation}
Moreover, the solution $u_{k+1}$ of the auxiliary problem (\ref{e.uk1N}) is given by
\begin{equation}\label{e.prox1}
u_{k+1} = \mathrm{prox}_{\wh\lambda \psi}(\wh y),
\end{equation}
where 
\begin{equation}\label{e.wty1}
\wh\lambda:=\frac{s_{k+1}}{1+\mu_f s_{k+1}},~~~\wh y:= \frac{1}{1+\mu_f s_{k+1}} \left(v_k- s_{k+1} \left(\nabla f(x_{k+1})-\mu_f x_{k+1}\right)\right).
\end{equation}
\end{prop}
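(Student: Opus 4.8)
The plan is to read off the solutions of the two auxiliary problems directly from the optimality conditions already established in Proposition \ref{p.phi0}, specialized to $C=V$ and $\omega$ given by \gzit{e.proxq}. The point is that for this choice of prox-function $\nabla\omega(x)=x-x_0$, so the subgradient inclusions \gzit{e.optsub} and \gzit{e.optsubn} become affine in the unknown minimizer, and after collecting the terms involving that unknown they take exactly the shape of the optimality condition \gzit{e.opts} for a classical proximity operator.

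First I would treat \gzit{e.vk1}. Substituting $\nabla\omega(x)=x-x_0$ and $N_C\equiv\{0\}$ (since $C=V$) into \gzit{e.optsub} gives
\[
x_0-v_{k+1}-\sum_{i=1}^{k+1} s_i\bigl(\nabla f(x_i)+\mu_f(v_{k+1}-x_i)\bigr)\in S_{k+1}\,\partial\psi(v_{k+1}).
\]
Now I would use $\sum_{i=1}^{k+1}s_i=S_{k+1}$ to pull the $v_{k+1}$ terms together: the left side equals
$x_0-\sum_{i=1}^{k+1}s_i(\nabla f(x_i)-\mu_f x_i)-(1+\mu_f S_{k+1})v_{k+1}$.
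Dividing the whole inclusion by $1+\mu_f S_{k+1}>0$ and setting $\wt\lambda$, $\wt y$ as in \gzit{e.wty}, this rearranges to
\[
0\in v_{k+1}-\wt y+\wt\lambda\,\partial\psi(v_{k+1}),
\]
which by \gzit{e.opts} is precisely the statement $v_{k+1}=\mathrm{prox}_{\wt\lambda\psi}(\wt y)$; uniqueness is guaranteed since the objective in \gzit{e.vk1} is strongly convex (it contains $B_\omega(\cdot,x_0)$, which is $1$-strongly convex by \gzit{e.inbreg}).

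The argument for \gzit{e.uk1N} is identical in form but one iteration "thick": starting from \gzit{e.optsubn} with $\nabla\omega(x)=x-x_0$ and $N_C\equiv\{0\}$, I get
\[
v_k-u_{k+1}-s_{k+1}\bigl(\nabla f(x_{k+1})+\mu_f(u_{k+1}-x_{k+1})\bigr)\in s_{k+1}\,\partial\psi(u_{k+1}),
\]
and collecting the $u_{k+1}$ terms yields $v_k-s_{k+1}(\nabla f(x_{k+1})-\mu_f x_{k+1})-(1+\mu_f s_{k+1})u_{k+1}$ on the left; dividing by $1+\mu_f s_{k+1}$ and using the definitions of $\wh\lambda$, $\wh y$ in \gzit{e.wty1} gives $0\in u_{k+1}-\wh y+\wh\lambda\,\partial\psi(u_{k+1})$, i.e. $u_{k+1}=\mathrm{prox}_{\wh\lambda\psi}(\wh y)$.

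I do not anticipate a genuine obstacle here — this is essentially an algebraic rearrangement. The one place to be slightly careful is the step where the quadratic penalty terms $\tfrac{\mu_f}{2}\|x-x_i\|^2$ are differentiated and the resulting $\mu_f v_{k+1}$ (resp. $\mu_f u_{k+1}$) contributions are combined with the $-v_{k+1}$ (resp. $-u_{k+1}$) coming from $\nabla\omega$, since that is exactly where the factor $1+\mu_f S_{k+1}$ (resp. $1+\mu_f s_{k+1}$) and hence the precise forms of $\wt\lambda,\wt y$ (resp. $\wh\lambda,\wh y$) are born; one should also note the harmless fact that the inclusion for $u_{k+1}$ in \gzit{e.optsubn} is stated with $\partial\psi(z_{k+1})$ but the relevant subgradient is evaluated at $u_{k+1}$, consistent with \gzit{e.uk1N}. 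Uniqueness of $u_{k+1}$ follows as before from $1$-strong convexity of $B_\omega(\cdot,v_k)$.
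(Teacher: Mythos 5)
Your proposal is correct and follows essentially the same route as the paper: both specialize the optimality conditions \gzit{e.optsub} and \gzit{e.optsubn} of Proposition \ref{p.phi0} to $C=V$ with $\nabla\omega(x)=x-x_0$, collect the terms in the unknown to produce the factor $1+\mu_f S_{k+1}$ (resp.\ $1+\mu_f s_{k+1}$), and identify the resulting inclusion with the prox optimality condition \gzit{e.opts}. Your explicit use of $\sum_{i=1}^{k+1}s_i=S_{k+1}$ and the remark about the typo $\partial\psi(z_{k+1})$ in \gzit{e.optsubn} are the only (harmless) additions.
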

%%%%%%%%%%

%%%%%%%%%%%%%
\begin{proof}
By (\ref{e.optsub}) for (\ref{e.vk1}) with $C=V$, we get
\[
x_0-v_{k+1}-\sum_{i=1}^{k+1} s_i (\nabla f(x_i) +\mu_f (v_{k+1}-x_i)) \in S_{k+1} \partial\psi(v_{k+1}),
\]
or equivalently 
\[
0 \in (1+\mu_f S_{k+1})v_{k+1}-x_0+\sum_{i=1}^{k+1} s_i (\nabla f(x_i) +\mu_f (v_{k+1}-x_i))+S_{k+1}  \partial\psi(v_{k+1}).
\]
This is the optimality condition (\ref{e.opts}) of the problem
\[
\D\min_{z \in V} ~\frac{1}{2} \|z-\wt y\|_2^2+S_{k+1}(1+\mu_f S_{k+1})^{-1} \psi(z),
\]
where $z_{k+1}$ is the unique minimizer of this problem with $\wt \lambda$ and
$\wt y$ given by (\ref{e.wty}). 

By (\ref{e.optsub}) for (\ref{e.uk1N}) with $C=V$, we get
\[
0 \in (1+\mu_f s_{k+1})u_{k+1}-\left(v_k-s_{k+1} (\nabla f(x_{k+1})+\mu_f x_{k+1})\right)+s_{k+1} \partial\psi(u_{k+1}).
\]
This is the optimality condition (\ref{e.opts}) of the problem
\[
\D\min_{z \in V} ~\frac{1}{2} \|z-\wh y\|_2^2+s_{k+1}(1+\mu_f s_{k+1})^{-1} \psi(z),
\]
where $z_{k+1}$ is the unique minimizer of this problem with $\wh \lambda$ and
$\wh y$ given by (\ref{e.wty1}).\qed
\end{proof}
%%%%%%%%%%%

Proposition \ref{p.prox} implies that if $C=V$, then the auxiliary problems
(\ref{e.vk1}) and (\ref{e.uk1N}) are reduced to proximal problems which is a
well-studied subject in convex optimization. More precisely, the proximal
problems (\ref{e.prox}) and (\ref{e.prox1}) can be solved for many simple
convex functions $\psi$ appearing in applications either in a closed form or 
by a simple iterative scheme (see, e.g., Table 6.1 in \cite{AhoT}).

In the reminder of this section we consider cases that both $\psi$ and $C$ are
simple enough such that the auxiliary problems (\ref{e.vk1}) and (\ref{e.uk1N})
can be solved in a closed form. In particular we discuss the box constraints 
$C=\{x\in\mathbb{R}^n \mid x\in \x=[\underline{x},\overline{x}]\}$.

%%%%%%%%%%%%%%%%%%%%%%%%%%%
\begin{prop}\label{p.bcopt}
Let $C=\{x\in\mathbb{R}^n \mid x\in \x=[\underline{x},\overline{x}]\}$ and $\omega$ be given by (\ref{e.proxq}). There exists 
$g\in \partial \psi(v_{k+1})$ such that the solution $v_{k+1}$ of the auxiliary problem (\ref{e.vk1}) satisfies
\begin{equation}\label{e.bcopt1}
\forall j = 1, \dots, n,~~~v_{k+1}^j=\left\{
\begin{array}{ll}
\underline{x}^j & ~~ \mathrm{if} ~  
(1+\mu_f S_{k+1}) \underline{x}^j-x_0^j+\sum_{i=1}^{k+1} s_i \left(\nabla f(x_i)-\mu_f x_i\right)^j+S_{k+1} g^j\geq 0,\\
\overline{x}^j  & ~~ \mathrm{if} ~ 
(1+\mu_f S_{k+1}) \overline{x}^j-x_0^j+\sum_{i=1}^{k+1} s_i \left(\nabla f(x_i)-\mu_f x_i\right)^j+S_{k+1} g^j\leq 0,\\
t_1^j & ~~ \mathrm{if} ~
\underline{x}^j < t_1^j < \overline{x}^j,
\end{array}
\right.
\end{equation}
where
\[
t_1:=\D \frac{1}{1+\mu_f S_{k+1}} \left( x_0^j-\sum_{i=1}^{k+1}s_i (\nabla f(x_i)^j-\mu_f x_i)-S_{k+1}g^j \right).
\]
There exists $g\in \partial \psi(u_{k+1})$ such that the solution $u_{k+1}$ of the auxiliary problem (\ref{e.uk1N}) satisfies
\begin{equation}\label{e.bcopt1n}
\forall j = 1, \dots, n,~~~u_{k+1}^j=\left\{
\begin{array}{ll}
\underline{x}^j & ~~ \mathrm{if} ~ 
(1+\mu_f s_{k+1}) \underline{x}^j-v_k^j+s_{k+1} \left(\nabla f(x_{k+1})-\mu_f x_{k+1}\right)^j+s_{k+1} g^j \geq 0,\\
\overline{x}^j  & ~~ \mathrm{if} ~  
(1+\mu_f s_{k+1}) \overline{x}^j-v_k^j+s_{k+1} \left(\nabla f(x_{k+1})-\mu_f x_{k+1}\right)^j+s_{k+1} g^j\leq 0,\\
t_2^j & ~~ \mathrm{if} ~
\underline{x}^j < t_2^j < \overline{x}^j,
\end{array}
\right.
\end{equation}
where
\[
t_2:=\D \frac{1}{1+\mu_f s_{k+1}} \left( v_k^j-s_{k+1} (\nabla f(x_{k+1})^j-\mu_f x_{k+1})-s_{k+1}g^j \right).
\]
\end{prop}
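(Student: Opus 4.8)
The plan is to specialize the optimality conditions of Proposition \ref{p.phi0} to the box $C=\{x\in\mathbb{R}^n \mid x\in\x=[\underline{x},\overline{x}]\}$, using that for $\omega$ given by (\ref{e.proxq}) one has $\nabla\omega(x)=x-x_0$, whence $\nabla\omega(x_0)-\nabla\omega(v_{k+1})=x_0-v_{k+1}$ and $\nabla\omega(v_k)-\nabla\omega(u_{k+1})=v_k-u_{k+1}$. With this prox-function the auxiliary problems (\ref{e.vk1}) and (\ref{e.uk1N}) are strongly convex ($\phi_{k+1}$ in (\ref{e.phikN}) contains the strongly convex term $\frac12\|x-x_0\|_2^2$ via (\ref{e.phik11N}), and the objective in (\ref{e.uk1N}) contains $\frac12\|x-v_k\|_2^2$), so each has a unique minimizer, at which the inclusions (\ref{e.optsub}) and (\ref{e.optsubn}) hold; in particular there exists $g\in\partial\psi(v_{k+1})$ (resp. $g\in\partial\psi(u_{k+1})$) realizing the respective inclusion, and it is this $g$ that appears in the statement.

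First I would handle $v_{k+1}$. Substituting $\nabla\omega(x)=x-x_0$ into (\ref{e.optsub}) and collecting the strong-convexity terms via $\sum_{i=1}^{k+1}s_i=S_{k+1}$ (from (\ref{e.sk}) with $S_0=0$), the inclusion reads: there are $g\in\partial\psi(v_{k+1})$ and $p\in N_C(v_{k+1})$ with
\[
-p=(1+\mu_f S_{k+1})\,v_{k+1}-x_0+\sum_{i=1}^{k+1}s_i\bigl(\nabla f(x_i)-\mu_f x_i\bigr)+S_{k+1}\,g.
\]
Now I would use that the normal cone of a box splits coordinatewise, $N_C(v)=\prod_{j=1}^n N_{[\underline{x}^j,\overline{x}^j]}(v^j)$, and that by (\ref{e.normc}) one has $N_{[\underline{x}^j,\overline{x}^j]}(\underline{x}^j)=(-\infty,0]$, $N_{[\underline{x}^j,\overline{x}^j]}(\overline{x}^j)=[0,+\infty)$, and $N_{[\underline{x}^j,\overline{x}^j]}(t)=\{0\}$ for $t\in(\underline{x}^j,\overline{x}^j)$. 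Reading off the three alternatives for each coordinate $j$ — $v_{k+1}^j=\underline{x}^j$ forces $-p^j\ge 0$; $v_{k+1}^j=\overline{x}^j$ forces $-p^j\le 0$; an interior value forces $-p^j=0$, in which case the displayed equation is solved for $v_{k+1}^j$ — gives exactly (\ref{e.bcopt1}), with $t_1$ as stated.

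The treatment of $u_{k+1}$ is identical in form: substituting $\nabla\omega(x)=x-x_0$ into (\ref{e.optsubn}) yields that there are $g\in\partial\psi(u_{k+1})$ and $p\in N_C(u_{k+1})$ with
\[
-p=(1+\mu_f s_{k+1})\,u_{k+1}-v_k+s_{k+1}\bigl(\nabla f(x_{k+1})-\mu_f x_{k+1}\bigr)+s_{k+1}\,g,
\]
and the same coordinatewise case analysis produces (\ref{e.bcopt1n}) with $t_2$ as stated. I do not expect any genuine obstacle here; the only points requiring care are the sign convention in (\ref{e.normc}) when writing the normal cone of an interval, and the bookkeeping that turns $\mu_f\sum_{i=1}^{k+1}s_i v_{k+1}$ into $\mu_f S_{k+1}v_{k+1}$ so that the coefficient of $v_{k+1}$ (resp. of $u_{k+1}$) emerges as $1+\mu_f S_{k+1}$ (resp. $1+\mu_f s_{k+1}$).
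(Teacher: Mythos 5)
Your proposal is correct and follows essentially the same route as the paper: it specializes the optimality inclusions (\ref{e.optsub}) and (\ref{e.optsubn}) to $\nabla\omega(x)=x-x_0$, isolates a normal-cone element $p$ (the paper calls it $q$), and performs the same coordinatewise three-case analysis on the sign of $p^j$ dictated by (\ref{e.normc}) for a box. The only cosmetic difference is that you invoke the product structure of $N_C$ explicitly while the paper argues directly from the inequality $\langle q, v_{k+1}-z\rangle\geq 0$ for all $z\in\x$; the substance is identical.
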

%%%%%%%%%%

%%%%%%%%%%%%%
\begin{proof}
From (\ref{e.optsub}) and the definition of $N_\x(v_{k+1})$, there exists 
$g \in \partial\psi(v_{k+1})$ such that 
\begin{equation}\label{e.optc2}
0 \in \left\{(1+\mu_f S_{k+1}) v_{k+1}-x_0+\sum_{i=1}^{k+1} s_i \left(\nabla f(x_i)-\mu_f x_i\right)+S_{k+1} g+q ~\Big |~ \langle q,v_{k+1}-z\rangle\geq 0~~ \forall z \in \x \right\}.
\end{equation}
Deriving the $j$th component of $v_{k+1}$ involves three possibilities: (i) $v_{k+1}^j = \underline{x}^j$; (ii) $v_{k+1}^j = \overline{x}^j$; (iii) $\underline{x}^j < v_{k+1}^j < \overline{x}^j$. In Case (i), 
$v_{k+1}^j-z^j\leq 0$ for all $z \in \x$ implying $q^j \leq 0$. Then (\ref{e.optc2}) implies that
\[
(1+\mu_f S_{k+1}) v_{k+1}^j-x_0^j+\sum_{i=1}^{k+1} s_i \left(\nabla f(x_i)-\mu_f x_i\right)^j+S_{k+1} g^j \geq 0,
\]
for $v_{k+1}^j = \underline{x}^j$. In Case (ii), $v_{k+1}^j - z^j \geq 0$ for all $z \in \x$ so that $q^j \geq 0$. Hence (\ref{e.optc2}) yields  
\[
(1+\mu_f S_{k+1}) v_{k+1}^j-x_0^j+\sum_{i=1}^{k+1} s_i \left(\nabla f(x_i)-
\mu_f x_i\right)^j+S_{k+1} g^j \leq 0,
\]
for $v_{k+1}^j = \overline{x}^j$. In Case (iii), we have $v_{k+1}^j - z^j \geq 0$ for some $z \in \x$ and 
$v_{k+1}^j- z^j \leq 0$ for some other $z \in \x$. This leads to $q^j = 0$ implying 
\[
(1+\mu_f S_{k+1}) v_{k+1}^j-x_0^j+\sum_{i=1}^{k+1} s_i \left(\nabla f(x_i)-\mu_f x_i\right)^j+S_{k+1} g^j = 0.
\] 
These three cases lead to
\[
(1+\mu_f S_{k+1}) v_{k+1}^j-x_0^j+\sum_{i=1}^{k+1} s_i \left(\nabla f(x_i)-\mu_f x_i\right)^j+S_{k+1} g^j \left\{
\begin{array}{ll}
\geq 0 & ~~ \mathrm{if} ~ v_{k+1}^j = \underline{x}^j,\\
\leq 0 & ~~ \mathrm{if} ~ v_{k+1}^j = \overline{x}^j,\\
= 0    & ~~ \mathrm{if} ~ \underline{x}^j < v_{k+1}^j < \overline{x}^j.
\end{array}
\right.
\]
Computing $v_{k+1}$ from this equation implies (\ref{e.bcopt1}).

By (\ref{e.optsubn}) and the definition of $N_\x(u_{k+1})$, there exists 
$g \in \partial\psi(u_{k+1})$ such that 
\begin{equation}\label{e.optc22}
0 \in \left\{(1+\mu_f s_{k+1}) u_{k+1}-v_k+ s_{k+1} \left(\nabla f(x_{k+1})- \mu_f x_{k+1}\right)+s_{k+1} g+q \mid \langle q,u_{k+1}-z\rangle\geq 0~~ \forall z \in \x \right\}.
\end{equation}
To compute $u_{k+1}^j$ we consider three possibilities: 
(i) $u_{k+1}^j = \underline{x}^j$; (ii) $u_{k+1}^j = \overline{x}^j$; (iii) $\underline{x}^j < u_{k+1}^j < \overline{x}^j$. In Case (i), 
$u_{k+1}^j-z^j\leq 0$ for all $z \in \x$ implying $q^j \leq 0$. Then (\ref{e.optc22}) leads to
\[
(1+\mu_f s_{k+1}) u_{k+1}^j-v_k^j+s_{k+1} \left(\nabla f(x_{k+1})-
\mu_f x_{k+1}\right)^j+s_{k+1} g^j \geq 0,
\]
for $v_{k+1}^j = \underline{x}^j$. In Case (ii), $v_{k+1}^j - z^j \geq 0$ for all $z \in \x$ so that $q^j \geq 0$. This, together with (\ref{e.optc22}), implies  
\[
(1+\mu_f s_{k+1}) u_{k+1}^j-v_k^j+s_{k+1} \left(\nabla f(x_{k+1})-
\mu_f x_{k+1}\right)^j+s_{k+1} g^j \leq 0,
\]
for $v_{k+1}^j = \overline{x}^j$. In Case (iii), we have 
$u_{k+1}^j - z^j \geq 0$ for some $z \in \x$ and $u_{k+1}^j- z^j \leq 0$ for some other $z \in \x$, i.e., $q^j=0$ leading to 
\[
(1+\mu_f s_{k+1}) u_{k+1}^j-x_0^j+s_{k+1} \left(\nabla f(x_{k+1})-
\mu_f x_{k+1}\right)^j+s_{k+1} g^j = 0.
\] 
These three cases leads to
\[
(1+\mu_f s_{k+1}) u_{k+1}^j-v_k^j+s_{k+1} \left(\nabla f(x_{k+1})-
\mu_f x_{k+1}\right)^j+s_{k+1} g^j \left\{
\begin{array}{ll}
\geq 0 & ~~ \mathrm{if} ~ u_{k+1}^j = \underline{x}^j,\\
\leq 0 & ~~ \mathrm{if} ~ u_{k+1}^j = \overline{x}^j,\\
= 0    & ~~ \mathrm{if} ~ \underline{x}^j < u_{k+1}^j < \overline{x}^j,
\end{array}
\right.
\]
giving (\ref{e.bcopt1n}).\qed
\end{proof}
%%%%%%%%%%%

To show the applicability of Proposition \ref{p.bcopt}, we consider a special
case $\psi(\cdot) = \|\cdot\|_1$, which has been widely used in the fields
of sparse optimization and compressed sensing, see, e.g., \cite{Can,Don}.
We first need the following proposition. We use this result in 
Section \ref{s.elas}.

%%%%%%%%%%%%%%%%%%%%%%%%%%%
\begin{prop} \label{p.subd}
\cite[Proposition 2.3]{AhoT} Let $\phi:V \rightarrow \mathbb{R},~ \phi(x) = \|x\|$. Then 
\[
\partial \phi (x)= \left\{
\begin{array}{ll}
\{g \in V^* \mid \|g\|_* \leq 1\} & ~~ \mathrm{if} ~ x=0,\vspace{2mm}\\
\{g \in V^* \mid \|g\|_* = 1,~ \langle g,x \rangle = \|x\| \} & ~~ \mathrm{if} ~ x \neq 0.
\end{array}
\right.
\]
\end{prop}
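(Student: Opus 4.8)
The plan is to argue directly from the definition of the subdifferential together with the definition of the dual norm, distinguishing the two cases $x=0$ and $x\neq 0$. Recall that $g\in\partial\phi(x)$ means $\|y\|\geq\|x\|+\langle g,y-x\rangle$ for every $y\in V$, and that $\langle g,z\rangle\leq\|g\|_*\|z\|$ for all $z\in V$, with $\|g\|_*=\sup_{\|z\|\leq 1}\langle g,z\rangle$. Both inclusions in each case will be obtained by testing the subgradient inequality at a few well-chosen points $y$.

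For $x=0$ the subgradient inequality reduces to $\langle g,y\rangle\leq\|y\|$ for all $y\in V$. If $\|g\|_*\leq 1$, this is immediate from $\langle g,y\rangle\leq\|g\|_*\|y\|\leq\|y\|$; conversely, taking the supremum over $\|y\|\leq 1$ in $\langle g,y\rangle\leq\|y\|$ yields $\|g\|_*\leq 1$. This gives the first line of the claimed formula.

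For $x\neq 0$, first suppose $g\in\partial\phi(x)$. Testing the inequality at $y=0$ gives $\langle g,x\rangle\geq\|x\|$, while testing it at $y=2x$ gives $\langle g,x\rangle\leq\|x\|$; hence $\langle g,x\rangle=\|x\|$. Next, for any $z$ with $\|z\|\leq 1$ and any $t>0$, testing at $y=x+tz$ and using the triangle inequality $\|x+tz\|\leq\|x\|+t$ gives $\|x\|+t\geq\|x\|+t\langle g,z\rangle$, i.e. $\langle g,z\rangle\leq 1$, so $\|g\|_*\leq 1$. Since $\|x\|=\langle g,x\rangle\leq\|g\|_*\|x\|$ and $\|x\|>0$, we also get $\|g\|_*\geq 1$, hence $\|g\|_*=1$. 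Conversely, if $\|g\|_*=1$ and $\langle g,x\rangle=\|x\|$, then for any $y$ we have $\langle g,y-x\rangle=\langle g,y\rangle-\|x\|\leq\|g\|_*\|y\|-\|x\|=\|y\|-\|x\|$, which is exactly the subgradient inequality; thus $g\in\partial\phi(x)$. This establishes the second line.

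The argument presents no real obstacle; the only point requiring slight care is deducing $\|g\|_*=1$ rather than merely $\|g\|_*\leq 1$ in the case $x\neq 0$, which is precisely where positivity of $\|x\|$ is used in combination with the identity $\langle g,x\rangle=\|x\|$. One could alternatively cite the standard fact that $\phi=\|\cdot\|$ is the support function of the dual unit ball $B_*:=\{g\mid\|g\|_*\leq 1\}$, so that $\partial\phi(x)$ is the face of $B_*$ exposed by $x$, but the elementary computation above is self-contained and short.
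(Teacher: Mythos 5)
Your proof is correct in every step: the case $x=0$ follows directly from the definition of the dual norm, and in the case $x\neq 0$ the tests at $y=0$, $y=2x$, and $y=x+tz$ correctly yield $\langle g,x\rangle=\|x\|$ and $\|g\|_*\leq 1$, with $\|g\|_*\geq 1$ then forced by $\|x\|=\langle g,x\rangle\leq\|g\|_*\|x\|$ and $\|x\|>0$. The paper itself gives no proof of this proposition --- it is quoted verbatim from Proposition 2.3 of the author's thesis \cite{AhoT} --- so there is nothing to compare against; your self-contained elementary argument is the standard one and fully adequate.
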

%%%%%%%%%%

%%%%%%%%%%%%%%%%%%%%%%%%%%%%
\begin{prop} \label{p.soll1}
Let $C=\{x\in\mathbb{R}^n \mid x\in \x=[\underline{x},\overline{x}]\}$ and $\omega$ be given by (\ref{e.proxq}). Let also
\lbeq{e.ind}
\D \kappa(\wh p):=\sum_{\wh p^i<0} \wh p^i \underline{x}+
\sum_{\wh p^i>0} \wh p^i \overline{x},
\eeq
where $\wh p=x_0-\sum_{i=1}^{k+1}s_i(\nabla f(x_i)+\mu_f x_i)-S_{k+1} {\bf 1}$ (${\bf 1}$ is the vector of all ones) for (\ref{e.vk1}) and 
$\wh p=v_k-s_{k+1}(\nabla f(x_{k+1})+\mu_f x_{k+1})-s_{k+1} {\bf 1}$ for
(\ref{e.uk1N}). Then the global minimizer of the auxiliary problem (\ref{e.vk1}) for $\psi(x) = \|x\|_1$ is given by
\begin{equation}\label{e.soll11}
\forall j = 1, \dots, n,~~~ v_{k+1}^j = \left\{
\begin{array}{ll}
\underline{x}^j           & ~~ \mathrm{if} ~ \kappa(\wt q)>0,~c_1^j \geq 0,\\
\overline{x}^j            & ~~ \mathrm{if} ~\kappa(\wt q)>0,~ c_2^j \leq 0,\\
c_3^j & ~~ \mathrm{if} ~\kappa(\wt q)>0,~ c_3^j>0,\\
c_4^j & ~~ \mathrm{if} ~\kappa(\wt q)>0,~ c_4^j<0,\\
0& ~~ \mathrm{otherwise},
\end{array}
\right.
\end{equation}
where
\[
\begin{array}{l}
c_1:=\D(1+\mu_f S_{k+1})\underline{x}-x_0+\sum_{i=1}^{k+1} s_i (\nabla f(x_i)+\mu_f x_i)+S_{k+1}\mathrm{sign} (\underline{x}),\\
c_2:=\D(1+\mu_f S_{k+1})\overline{x}-x_0+\sum_{i=1}^{k+1} s_i (\nabla f(x_i)+\mu_f x_i)+S_{k+1}\mathrm{sign} (\overline{x}),\\
c_3:=\D \frac{1}{1+\mu_f S_{k+1}} \left(x_0-\sum_{i=1}^{k+1} s_i (\nabla f(x_i)+\mu_f x_i)-S_{k+1} {\bf 1}\right),\\
c_4:=\D\frac{1}{1+\mu_f S_{k+1}} \left(x_0-\sum_{i=1}^{k+1} s_i (\nabla f(x_i)+\mu_f x_i)+S_{k+1} {\bf 1}\right).
\end{array} 
\]
The global minimizer of the auxiliary problem (\ref{e.uk1N}) for 
$\psi(x) = \|x\|_1$ is given by
\begin{equation}\label{e.soll11}
\forall j = 1, \dots, n,~~~ u_{k+1}^j = \left\{
\begin{array}{ll}
\underline{x}^j           & ~~ \mathrm{if} ~ \kappa(\wt q)>0,~c_5^j \geq 0,\\
\overline{x}^j            & ~~ \mathrm{if} ~\kappa(\wt q)>0,~ c_6^j \leq 0,\\
c_7^j & ~~ \mathrm{if} ~\kappa(\wt q)>0,~ c_7^j>0,\\
c_8^j & ~~ \mathrm{if} ~\kappa(\wt q)>0,~ c_8^j<0,\\
0& ~~ \mathrm{otherwise},
\end{array}
\right.
\end{equation}
where
\[
\begin{array}{l}
c_5:=\D(1+\mu_f s_{k+1})\underline{x}-v_k+ s_{k+1} (\nabla f(x_{k+1})+\mu_f x_{k+1})+s_{k+1}\mathrm{sign} (\underline{x}),\\
c_6:=\D(1+\mu_f s_{k+1})\overline{x}-v_k+ s_{k+1} (\nabla f(x_{k+1})+\mu_f x_{k+1})+s_{k+1}\mathrm{sign} (\overline{x}),\\
c_7:=\D \frac{1}{1+\mu_f s_{k+1}} \left(v_k-s_{k+1} (\nabla f(x_{k+1})+\mu_f x_{k+1})-s_{k+1} {\bf 1}\right),\\
c_8:=\D\frac{1}{1+\mu_f s_{k+1}} \left(v_k-s_{k+1} (\nabla f(x_{k+1})+\mu_f x_{k+1})+s_{k+1} {\bf 1}\right).
\end{array}
\]
\end{prop}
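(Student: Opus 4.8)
The plan is to specialize Proposition \ref{p.bcopt} to $\psi=\|\cdot\|_1$ and then read off the minimizer componentwise. Proposition \ref{p.bcopt} already reduces the auxiliary problem (\ref{e.vk1}) over the box $C=\x=[\underline{x},\overline{x}]$ to the componentwise first-order conditions: there is $g\in\partial\psi(v_{k+1})$ for which $(1+\mu_f S_{k+1})v_{k+1}^j-x_0^j+\sum_{i=1}^{k+1}s_i(\nabla f(x_i)-\mu_f x_i)^j+S_{k+1}g^j$ is $\ge 0$, $\le 0$, or $=0$ according as $v_{k+1}^j$ equals $\underline{x}^j$, equals $\overline{x}^j$, or lies strictly between the bounds. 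Since $\|x\|_1=\sum_j|x^j|$ is separable, $\partial\psi(v)$ is the Cartesian product of the scalar subdifferentials $\partial|\cdot|(v^j)$, and Proposition \ref{p.subd} applied on $\mathbb{R}$ with $\phi=|\cdot|$ gives $g^j=\mathrm{sign}(v_{k+1}^j)$ whenever $v_{k+1}^j\neq 0$ and $g^j\in[-1,1]$ when $v_{k+1}^j=0$. Equivalently, the $j$-th subproblem is the strictly convex one-dimensional problem of minimizing $\tfrac12(1+\mu_f S_{k+1})(x^j)^2-b^jx^j+S_{k+1}|x^j|$ over $[\underline{x}^j,\overline{x}^j]$, where $b^j$ is the linear-data entry read off from (\ref{e.optsub}); its minimizer is obtained by clipping the soft-thresholded unconstrained solution to $[\underline{x}^j,\overline{x}^j]$.

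The core of the argument is then a finite case split. For a fixed $j$ the minimizer lies in exactly one of five mutually exclusive regimes: at $\underline{x}^j$, at $\overline{x}^j$, at an interior positive point, at an interior negative point, or at $0$. In the two boundary regimes the sign of $\underline{x}^j$ (respectively $\overline{x}^j$) pins down $g^j$, and the first-order condition becomes the sign test $c_1^j\ge 0$ (respectively $c_2^j\le 0$). In the two interior nonzero regimes $g^j=\pm 1$, so solving the first-order equation for $v_{k+1}^j$ yields $v_{k+1}^j=c_3^j$ when this value is positive and $v_{k+1}^j=c_4^j$ when it is negative. In the remaining regime $v_{k+1}^j=0$. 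Whether one is in the all-zero situation is detected globally by the scalar $\kappa(\widehat p)$ built from $\widehat p=x_0-\sum_{i=1}^{k+1}s_i(\nabla f(x_i)+\mu_f x_i)-S_{k+1}\mathbf{1}$: when $\kappa(\widehat p)\le 0$ the point $0$ satisfies the optimality system (\ref{e.optsub}), forcing $v_{k+1}=0$; when $\kappa(\widehat p)>0$ one selects componentwise among the four nonzero alternatives. Assembling these cases gives (\ref{e.soll11}).

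For the auxiliary problem (\ref{e.uk1N}) the argument is identical, starting from (\ref{e.optsubn}) in place of (\ref{e.optsub}): the $j$-th scalar subproblem is $\min\,\tfrac12(1+\mu_f s_{k+1})(x^j)^2-(v_k-s_{k+1}(\nabla f(x_{k+1})+\mu_f x_{k+1}))^jx^j+s_{k+1}|x^j|$ over $[\underline{x}^j,\overline{x}^j]$, and the same five regimes produce the constants $c_5,\dots,c_8$ together with the threshold $\kappa(\widehat p)$ built from $\widehat p=v_k-s_{k+1}(\nabla f(x_{k+1})+\mu_f x_{k+1})-s_{k+1}\mathbf{1}$.

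The step I expect to be the main obstacle is the bookkeeping around $v_{k+1}^j=0$ and the box endpoints: one must verify that the residual freedom $g^j\in[-1,1]$ is compatible with feasibility exactly when none of the four nonzero conditions is triggered, that the five regimes genuinely partition all possibilities with no overlap, and that the kink of $|\cdot|$ at $0$ is handled correctly when a bound $\underline{x}^j$ or $\overline{x}^j$ happens to equal $0$, so that $\mathrm{sign}$ must be read in the appropriate one-sided sense. Everything else is direct substitution into the conditions supplied by Propositions \ref{p.bcopt} and \ref{p.subd}.
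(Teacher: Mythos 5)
Your proposal is correct and follows essentially the same route as the paper's own proof: both specialize the componentwise optimality conditions of Proposition \ref{p.bcopt} to $\psi=\|\cdot\|_1$ via Proposition \ref{p.subd}, use the normal-cone/threshold quantity $\kappa(\wh p)$ to detect the all-zero solution, and otherwise split each coordinate into the five regimes (lower bound, upper bound, interior positive, interior negative, zero) to read off $c_1,\dots,c_8$. Your framing of the scalar subproblem as soft-thresholding followed by clipping is only a cosmetic repackaging of the same case analysis, and the bookkeeping issues you flag (the residual freedom $g^j\in[-1,1]$ at zero and the one-sided reading of $\mathrm{sign}$ at the endpoints) are exactly the points the paper's proof also glosses over.
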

%%%%%%%%%%

%%%%%%%%%%%%%
\begin{proof}
Proposition \ref{p.subd} for $\psi(x) = \|x\|_1$ leads to
\begin{equation}\label{e.subdl1}
\partial \|x\|_1= \left\{
\begin{array}{ll}
\{g \in \mathbb{R}^n \mid \|g\|_\infty \leq 1\} & ~~ \mathrm{if} ~ x=0,\vspace{2mm}\\
\{g \in \mathbb{R}^n \mid \|g\|_\infty = 1,~ \langle g,x \rangle = \|x\|_1 \} & ~~ \mathrm{if} ~ x \neq 0.
\end{array}
\right.
\end{equation}
We first show $v_{k+1}=0$ if and only if $\kappa(\wh p)\leq 0$. By the
definition of the normal cone of $\x$ at 0, we have
\[
N_{\x}(0) = \left \{ p \in V \mid \forall z \in [\underline{x}, \overline{x}], \langle p, z \rangle \leq 0 \right\} = \left \{ p \in V ~\Big |~  \sum_{p_i<0} p_i \underline{x}+\sum_{p_i>0}p_i \overline{x} \leq 0 \right\}.
\]
From (\ref{e.optc2}), $z_{k+1}=0$ if and only if there exists 
\[
p\in N_{\x}(0)\bigcap \left(x_0-\sum_{i=1}^{k+1} s_i (\nabla f(x_i)+\mu_f x_i)-S_{k+1} \partial\psi(0)\right).
\]
By (\ref{e.subdl1}), this is possible if and only if 
\[
\min \left\{ \sum_{p_i<0} p_i \underline{x}+\sum_{p_i>0}p_i \overline{x}~\Big |~ p=x_0-\sum_{i=1}^{k+1} s_i (\nabla f(x_i)+\mu_f x_i)-S_{k+1} g, 
~\|g\|_\infty \leq 1 \right\} \leq 0.
\]
The solution of this problem is 
$\wh p=x_0-\sum_{i=1}^{k+1} s_i (\nabla f(x_i)+\mu_f x_i)-S_{k+1}{\bf 1}$.
Hence the minimum of this problem is given by (\ref{e.ind}). This implies $v_{k+1}=0$ if and only if
$\kappa(\wh p)\leq 0$. 

Let us assume $v_{k+1}\neq 0$, i.e., $\kappa(\wt q) > 0$. From (\ref{e.subdl1}), we obtain
\[
\partial \|v_{k+1} \|_1=\{g \in \mathbb{R}^n \mid \|g\|_\infty = 1,~ \langle g,v_{k+1} \rangle = \|v_{k+1}\|_1 \},
\]
leading to
\[
\sum_{j=1}^n (g^j v_{k+1}^j - |v_{k+1}^j|) = 0. 
\]
By induction on nonzero elements of $v_{k+1}$, we get $g^i v_{k+1}^i = |v_{k+1}^i|$, for $i = 1, \dots, n$. This implies that $g^i = \mathrm{sign} (\wh z^i)$ if $v_{k+1}^i \neq 0$. This implies 
\[
(1+\mu_f S_{k+1}) v_{k+1}^j-x_0^j+\sum_{i=1}^{k+1} s_i (\nabla f(x_i)+\mu_f x_i)+S_{k+1}(\partial \|v_{k+1}\|_1)^j \left\{
\begin{array}{ll}
\geq 0 &~~ \mathrm{if}~ v_{k+1}^j = \underline{x}^j,\\
\leq 0 &~~ \mathrm{if}~ v_{k+1}^j = \overline{x}^j,\\
= 0    &~~ \mathrm{if}~ \underline{x}^j < v_{k+1}^j < \overline{x}^j,
\end{array}
\right.
\]
for $j = 1, \dots, n$, leading to
\lbeq{e.optcl1}
(1+\mu_f S_{k+1})v_{k+1}^j-x_0^j+\sum_{i=1}^{k+1} s_i (\nabla f(x_i)+\mu_f x_i)+S_{k+1}\mathrm{sign} (v_{k+1}^j) \left\{
\begin{array}{ll}
\geq 0 &~~ \mathrm{if}~ v_{k+1}^j = \underline{x}^j,\\
\leq 0 &~~ \mathrm{if}~ v_{k+1}^j = \overline{x}^j,\\
= 0    &~~ \mathrm{if}~ \underline{x}^j < v_{k+1}^j < \overline{x}^j.
\end{array}
\right.
\eeq
Substituting $v_{k+1}^j = \underline{x}^j$ in (\ref{e.optcl1}) implies 
$c_1^j \geq 0$. If $v_{k+1}^j = \overline{x}^j$, we have $c_1^j \leq 0$.
If $\underline{x}^j < v_{k+1}^j < \overline{x}^j$, there are three possibilities: (i) $v_{k+1}^j > 0$; (ii) $v_{k+1}^j < 0$; (iii) 
$v_{k+1}^j = 0$. In Case (i), $\mathrm{sign} (v_{k+1}^j) = 1$ and
(\ref{e.optcl1}) lead to $v_{k+1}^j =c_2^j>0$. In Case (ii),  
$\mathrm{sign} (\wh z^i) = -1$ and (\ref{e.optcl1}) imply 
$v_{k+1}^j =c_3^j<0$. In Case (c), we get $v_{k+1}^j = 0$.

Now let us consider the solution of the auxiliary problem (\ref{e.uk1N}). By
(\ref{e.optc22}), we get $u_{k+1}=0$ if and only if there exists 
$p\in N_C(0)\cap(v_k-s_{k+1} \nabla f(x_{k+1})-s_{k+1} \partial\psi(0))$. From (\ref{e.subdl1}), this is possible if and only if 
\[
\min \left\{ \sum_{p_i<0} p_i \underline{x}+\sum_{p_i>0}p_i \overline{x}~\Big |~ p=v_k-s_{k+1} (\nabla f(x_{k+1})+\mu_f x_{k+1})-s_{k+1} g, 
~\|g\|_\infty \leq 1 \right\} \leq 0.
\]
The solution of this problem is 
$\wh p=v_k-s_{k+1} (\nabla f(x_{k+1})+\mu_f x_{k+1})-s_{k+1}{\bf 1}$. Thus the
minimum of this problem is given by (\ref{e.ind}). This suggests $u_{k+1}=0$ if
and only if $\kappa(\wh p)\leq 0$. 

The definition of $N_C(u_{k+1})$ and (\ref{e.optc22}) imply 
\[
(1+\mu_f s_{k+1}) u_{k+1}^j-v_k^j+s_{k+1} (\nabla f(x_{k+1})+\mu_f x_{k+1})+s_{k+1}(\partial \|u_{k+1}\|_1)^j \left\{
\begin{array}{ll}
\geq 0 &~~ \mathrm{if}~ u_{k+1}^j = \underline{x}^j,\\
\leq 0 &~~ \mathrm{if}~ u_{k+1}^j = \overline{x}^j,\\
= 0    &~~ \mathrm{if}~ \underline{x}^j < u_{k+1}^j < \overline{x}^j,
\end{array}
\right.
\]
for $j = 1, \dots, n$. Equivalently for $u_{k+1} \neq 0$, we get
\lbeq{e.optcl12}
(1+\mu_f s_{k+1})u_{k+1}^j-v_k^j+s_{k+1} (\nabla f(x_{k+1})+\mu_f x_{k+1})+s_{k+1}\mathrm{sign} (u_{k+1}^j) \left\{
\begin{array}{ll}
\geq 0 &~~ \mathrm{if}~ u_{k+1}^j = \underline{x}^j,\\
\leq 0 &~~ \mathrm{if}~ u_{k+1}^j = \overline{x}^j,\\
= 0    &~~ \mathrm{if}~ \underline{x}^j < u_{k+1}^j < \overline{x}^j.
\end{array}
\right.
\eeq
If $u_{k+1}^j = \underline{x}^j$, we have $c_4^j \geq 0$. Substituting
$u_{k+1}^j = \overline{x}^j$ in (\ref{e.optcl12}) implies $c_4^j \leq 0$.
If $\underline{x}^j < u_{k+1}^j < \overline{x}^j$, there are three possibilities: (i) $u_{k+1}^j > 0$; (ii) $u_{k+1}^j < 0$; (iii) $u_{k+1}^j=0$.
In Case (i), $\mathrm{sign} (u_{k+1}^j) = 1$ and (\ref{e.optcl12})
imply $u_{k+1}^j =c_5^j>0$. In Case (ii), $\mathrm{sign} (\wh z^i) = -1$ and (\ref{e.optcl12}) lead to $u_{k+1}^j =c_6^j<0$. In Case (c), we get 
$u_{k+1}^j = 0$.\qed
\end{proof}
%%%%%%%%%%%

A particular case of box constraints is the nonnegativity constraints 
($x\geq 0$) appearing in many applications because $x$ describes some physical
quantities,  see, e.g., \cite{EssLX,KauN2}. Propositions \ref{p.bcopt}
and \ref{p.soll1} can be simplified for nonnegativity constraints.

\vspace{-1mm}
%%%%%%%%%%%%%%%%%%%%%%%%%%%%%%%%%%%%%%%%%%%%%%%%%%%%%%%%%%%%%%%%%%%%%%%%%%%%%%%
%%%%%%%%%%%%%%%%%%%%%%%%%%%%%%%%%%%%%%%%%%%%%%%%%%%%%%%%%%%%%%%%%%%%%%%%%%%%%%%
\section{Numerical experiments}
In this section we report some numerical results to compare the performance of  ASGA-1, ASGA-2, ASGA-3, and ASGA-4 with some state-of-the-art solvers. More precisely, we compare them with NSDSG (nonsummable
diminishing subgradient algorithm \cite{BoyXM}), PGA (proximal gradient algorithm \cite{ParB}), FISTA (Beck and Teboulle's fast proximal gradient algorithm \cite{BecT2}), NESCO (Nesterov's composite gradient algorithm
\cite{NesC}), NESUN (Nesterov's universal gradient algorithm \cite{NesU}). 

The codes of all algorithms are written in MATLAB, where the codes of
ASGA-1, ASGA-2, ASGA-3, and ASGA-4 are available at
\begin{center}
\url{http://homepage.univie.ac.at/masoud.ahookhosh/}.
\end{center}
For $\ell_1$ and elastic net minimization (Sections \ref{s.l1} and
\ref{s.elas}), ASGA-2 and ASGA-4 use $\gamma_1=4$ and $\gamma_2=0.9$, while
for support vector machine (Section \ref{s.svm}) ASGA-2 uses $\gamma_1=4$ and $\gamma_2=0.9$ and ASGA-4 uses $\gamma_1=4$ and $\gamma_2=0.6$. The other
considered algorithms use the parameters proposed in the associated literature.
In our implementation, NSDSG uses the step-sizes $\alpha_k:=\alpha_0/\sqrt{k}$, 
where we set $\alpha_0=10^{-1}$ in Sections \ref{s.l1} and \ref{s.elas} and 
$\alpha_0=5\times 10^{-11}$ in Section \ref{s.svm}. All numerical experiments
are executed on a PC Intel Core i7-3770 CPU 3.40GHz 8 GB RAM.

\vspace{-2mm}
%%%%%%%%%%%%%%%%%%%%%%%%%%%%%%%%%%%%%%%%%%%%%%%%%%%%%%%%%%%%%%%%%%%%%%%%%%%%%%%
\subsection{{\bf $\ell_1$ minimization}}\label{s.l1}
We consider solving the underdetermined system 
\begin{equation}\label{e.line}
A x = y,
\end{equation}
where $A \in \mathbb{R}^{m \times n}$ ($m \leq n$) and $y \in \mathbb{R}^m$. 
Underdetermined system of linear equations is frequently appeared in many applications of linear inverse problem such as those in the fields signal and image processing, geophysics, economics, machine learning, and statistics. The objective is to recover $x$ from the observed vector $y$ and matrix $A$ by some optimization models. Due to the ill-conditioned feature of the problem, a
regularized version of the problem is minimized, cf. \cite{NeuI}. We here 
consider the $\ell_1$ minimization 
\begin{equation}\label{e.l1}
\min_{x\in\mathbb{R}^n} ~~ \D \frac{1}{2} \|y-Ax\|_2^2 + \lambda \|x\|_1,
\end{equation}
where $\lambda > 0$ is a regularization parameter. This is a nonsmooth convex
problem of the form (\ref{e.gfun}) with $f(x)= \frac{1}{2} \|y-Ax\|_2^2$ and 
$\psi(x)=\lambda \|x\|_1$. It is straightforward to see that $f$ is Lipschitz
continuous with $\nu=1$ and $L_\nu=\|A\|_2^2$ implying that ASGA-1 and ASGA-3 
can be applied to this problem.

The problem is generated by
\begin{equation}\label{e.data}
\mathtt{[A,z,x] = i\_laplace(n),~~~ y= z + 0.1*rand,}
\end{equation}
where $n=5000$ is the problem dimension and $\mathtt{i\_laplace.m}$ is an ill-posed problem generator using the inverse Laplace transformation from Regularization Tools package (cf. \cite{Han}), which is available at
\begin{center}
\url{http://www.imm.dtu.dk/~pcha/Regutools/}.
\end{center}
We here run NESUN, ASGA-1, ASGA-2, ASGA-3, and ASGA-4 to solve this $\ell_1$ 
minimization problem. The algorithms are stopped after 30 seconds of the
running time. The results are summarized Table \ref{t.l1}, where $f_b$ and
$f_N$ denote the best function value and the number function evaluations, respectively.

From the results of Table \ref{t.l1} we will see that NESUN, ASGA-2, and
ASGA-4 are more sensitive to regularization parameters than ASGA-1 and ASGA-3;
however, ASGA-1 is much less sensitive than NESUN and ASGA-2. It can also be
seen that NESUN attains the worst results for $\lambda=10$ and $\lambda=1$. For
$\lambda \leq 10^{-1}$, we see that ASGA-2 and ASGA-4 outperform
the others, while NESUN, ASGA-1 and ASGA-3 perform to some extent comparable. During our experiments we spot a disadvantage of NESUN, ASGA-2, and
ASGA-4 which is the sensitivity to the small accuracy parameter $\eps$.
In this case we found out that the associated line search does not terminate
because of the possible round-off error that is a usual problem in Armijo-type
line searches (cf. \cite{AhoG}). For $\lambda=10^{-1}$, we show this in
Subfigures (a) and (b) of Figure \ref{f.l1} with $\eps=10^{-1}$ and 
$\eps=10^{-4}$, respectively. Therefore, it would be much more reliable to
apply ASGA-1 and ASGA-3 for the accuracy parameter smaller than $\eps=10^{-2}$ 
if $\nu$ and $L_\nu$ are available.
 
%%%%%%%%%%%%%%%%%%%
\begin{table}[htbp]
\caption
{Best function values $f_b$ and the number of function evaluations $N_f$ for
NESUN, ASGA-1, ASGA-2, ASGA-3, and ASGA-4 for solving the $\ell_1$ minimization problem (\ref{e.l1}) with several regularization parameters}
\label{t.l1}
%\vspace{-4mm}
\begin{center}\footnotesize
\renewcommand{\arraystretch}{1.2}
\begin{tabular}{lllllllllllll}\hline
\multicolumn{1}{l}{Reg.par.} \hspace{5mm}
&\multicolumn{1}{l}{{\bf NESUN}} &\multicolumn{1}{l}{} \hspace{5mm} 
&\multicolumn{1}{l}{{\bf ASGA-1}} &\multicolumn{1}{l}{} \hspace{5mm} 
&\multicolumn{1}{l}{{\bf ASGA-2}} &\multicolumn{1}{l}{} \hspace{5mm}  
&\multicolumn{1}{l}{{\bf ASGA-3}} &\multicolumn{1}{l}{} \hspace{5mm}
&\multicolumn{1}{l}{{\bf ASGA-4}} &\multicolumn{1}{l}{} \\ 
\cmidrule(lr){2-3} \cmidrule(lr){4-5} \cmidrule(lr){6-7}
\cmidrule(lr){8-9} \cmidrule(lr){10-11} 
\multicolumn{1}{l}{} 
&\multicolumn{1}{l}{$f_b$} &\multicolumn{1}{l}{$f_N$}
&\multicolumn{1}{l}{$f_b$} &\multicolumn{1}{l}{$f_N$}
&\multicolumn{1}{l}{$f_b$} &\multicolumn{1}{l}{$f_N$}
&\multicolumn{1}{l}{$f_b$} &\multicolumn{1}{l}{$f_N$}
&\multicolumn{1}{l}{$f_b$} &\multicolumn{1}{l}{$f_N$}\\
\hline
$\lambda = 10$ & 3134.81 & 622 & 357.39 & 612 & 363.34 & 611 
& 356.77 & 605 & 3051.31 & 605\\
$\lambda = 1$ & 284.88 & 618 & 60.94 & 623 & 68.46 & 617 
& 60.86 & 605 & 59.76 & 608\\
$\lambda = 10^{-1}$ & 7.89 & 621 & 7.73 & 627 & 7.59 & 603  
& 7.80 & 606 & 6.16 & 601\\
$\lambda = 10^{-2}$ & 1.78 & 656 & 2.64 & 588 & 0.98 & 597 
& 2.57 & 595 & 0.98 & 588\\
$\lambda = 10^{-3}$ & 1.80 & 619 & 1.42 & 609 & 0.92 & 600 
& 1.46 & 587 & 0.97 & 616\\
$\lambda = 10^{-4}$ & 0.21 & 635 & 0.20 & 614 & 0.20 & 639 
& 0.20 & 632 & 0.20 & 613\\
$\lambda = 10^{-5}$ & 0.04 & 641 & 0.03 & 606 & 0.02 & 610 
& 0.03 & 615 & 0.02 & 616\\
\hline
\end{tabular}
\end{center}
\end{table}
%%%%%%%%%%%

%%%%%%%%%%%%%%%%%%%%
\begin{figure}[htbp]
\centering
\subfloat[][$\lambda=10^{-1},~~ \eps=10^{-1}$]
{\includegraphics[width=7.7cm]{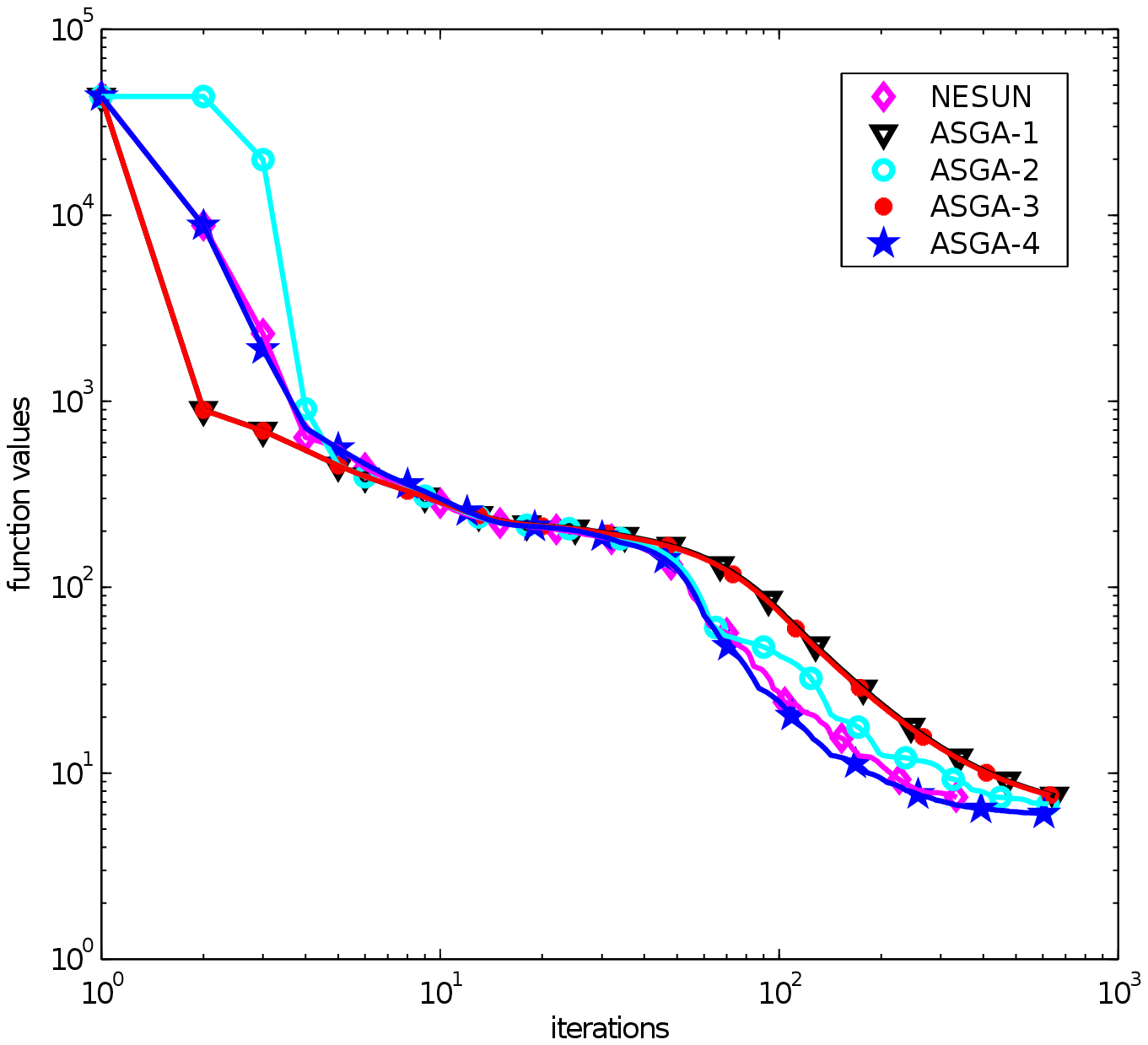}}%
\qquad
\subfloat[][$\lambda=10^{-1},~~ \eps=10^{-4}$]
{\includegraphics[width=7.7cm]{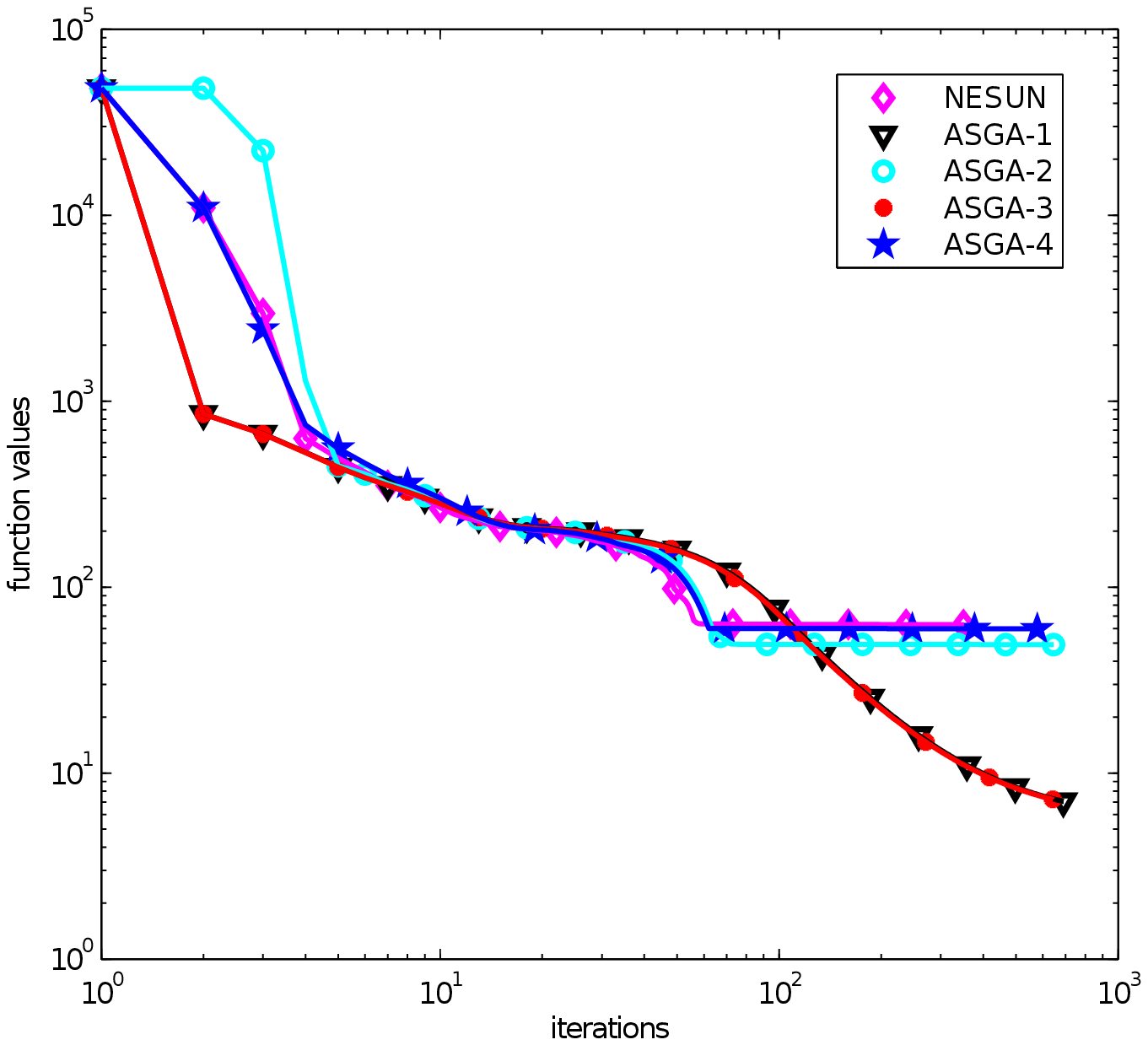}}

\caption{A comparison among NESUN, ASGA-1, ASGA-2, ASGA-3, and ASGA-4 for solving the $\ell_1$ minimization problem (\ref{e.l1}). For $\lambda=10^{-1}$, 
Subfigures (a) and (b) display the results for $\eps=10^{-1}$ and
$\eps=10^{-4}$, respectively. The algorithms stopped after 30 seconds.}
\label{f.l1}
\end{figure}
%%%%%%%%%%%%

\vspace{-10mm}
%%%%%%%%%%%%%%%%%%%%%%%%%%%%%%%%%%%%%%%%%%%%%%%%%%%%%%%%%%%%%%%%%%%%%%%%%%%%%%%
\subsection{{\bf Elastic net minimization}}\label{s.elas}
Let us consider the underdetermined system (\ref{e.line}), where
the data is generated by (\ref{e.data}). Since this problem is 
ill-conditioned, we apply a regularized least-squares with the elastic net regularizer, i.e.,
\lbeq{e.elas1}
\min_{x\in\mathbb{R}^n}~~ \D \frac{1}{2} \|y-Ax\|_2^2 + \frac{1}{2} \lambda_1 \|x\|_2^2 + \lambda_2 \|x\|_1
\eeq 
or
\lbeq{e.elas2}
\bary{ll}
\min &~~ \D \frac{1}{2} \|y-Ax\|_2^2 + \frac{1}{2} \lambda_1 \|x\|_2^2 + \lambda_2 \|x\|_1\\
\st  &~~ x \in \x=[\underline{x},\overline{x}],
\eary
\eeq 
where $\lambda_1, \lambda_2 > 0$ are regularization parameters. This problem
is nonsmooth and strongly convex. By setting 
$f(x)=\frac{1}{2}\|y-Ax\|_2^2+\frac{1}{2}\lambda_1\|x\|_2^2$ and 
$\psi(x)=\lambda_2 \|x\|_1$, we have that $f$ is $\lambda_1$-strongly convex
and has Lipschitz continuous gradients with $\nu=1$ and 
$L_\nu=\|A\|_2^2+\lambda_1$.

We now run NSDSG, PGA, FISTA, NESCO, NESUN, ASGA-1, ASGA-2, ASGA-3, and ASGA-4 for 
solving the elastic net minimization problem (\ref{e.elas1}) and NSDSG, NESCO,
NESUN, ASGA-1, ASGA-2, ASGA-3, and ASGA-4 for solving the box-constrained version
(\ref{e.elas2}). The auxiliary problems of NESCO, NESUN, ASGA-1, ASGA-2, ASGA-3,
and ASGA-4 are solved using the statements of Proposition \ref{p.soll1}. For (\ref{e.elas2}), we set $\mathtt{\x=[-ones(5000,1), ones(5000,1)]}$. We stop the algorithms after 20 seconds of the running time.
The results are summarized in Table \ref{t.l22l1}.

%%%%%%%%%%%%%%%%%
\begin{landscape}
\begin{table}[htbp]
\caption{Numerical results of NSDSG, PGA, FISTA, NESCO, NESUN, ASGA-1, ASGA-2, ASGA-3, and ASGA-4 for the elastic net minimization problems (\ref{e.elas1})
and (\ref{e.elas2}). The first 10 problems stands for (\ref{e.elas1}) and the
remainder for (\ref{e.elas2}). The algorithms were stopped after 20 seconds of
the running time. $P$, $f_b$, and $N_f$ denote the problem number, the best function value, and the number of the function evaluations achieved by the
algorithms, respectively. }
\label{t.l22l1}
\begin{center}\footnotesize
\renewcommand{\arraystretch}{1.85}
\begin{tabular}{lllllllllllllllllllll}
\hline
\multicolumn{1}{l}{$P$}
&\multicolumn{1}{l}{$\lambda_1$} 
&\multicolumn{1}{l}{$\lambda_2$} 
&\multicolumn{1}{l}{NSDSG}  &\multicolumn{1}{l}{}       
&\multicolumn{1}{l}{PGA}    &\multicolumn{1}{l}{} 
&\multicolumn{1}{l}{FISTA}  &\multicolumn{1}{l}{} 
&\multicolumn{1}{l}{NESCO}  &\multicolumn{1}{l}{}  
&\multicolumn{1}{l}{NESUN}  &\multicolumn{1}{l}{} 
&\multicolumn{1}{l}{ASGA-1}  &\multicolumn{1}{l}{} 
&\multicolumn{1}{l}{ASGA-2}  &\multicolumn{1}{l}{} 
&\multicolumn{1}{l}{ASGA-3}  &\multicolumn{1}{l}{}  
&\multicolumn{1}{l}{ASGA-4}  &\multicolumn{1}{l}{} \\ 
\cmidrule(lr){4-5}\cmidrule(lr){6-7} \cmidrule(lr){8-9} \cmidrule(lr){10-11} 
\cmidrule(lr){12-13}\cmidrule(lr){14-15}\cmidrule(lr){16-17}
\cmidrule(lr){18-19}\cmidrule(lr){20-21}

\multicolumn{1}{l}{} &\multicolumn{1}{l}{} & \multicolumn{1}{l}{}        
&\multicolumn{1}{l}{$f_b$} &\multicolumn{1}{l}{$N_f$} 
&\multicolumn{1}{l}{$f_b$} &\multicolumn{1}{l}{$N_f$}
&\multicolumn{1}{l}{$f_b$} &\multicolumn{1}{l}{$N_f$}
&\multicolumn{1}{l}{$f_b$} &\multicolumn{1}{l}{$N_f$}
&\multicolumn{1}{l}{$f_b$} &\multicolumn{1}{l}{$N_f$}
&\multicolumn{1}{l}{$f_b$} &\multicolumn{1}{l}{$N_f$}
&\multicolumn{1}{l}{$f_b$} &\multicolumn{1}{l}{$N_f$}
&\multicolumn{1}{l}{$f_b$} &\multicolumn{1}{l}{$N_f$}
&\multicolumn{1}{l}{$f_b$} &\multicolumn{1}{l}{$N_f$}\\ 
\hline
1 & $10^{-3}$ & $1$ & 364.24 & 854 & 55.79 & 691 & 53.91 & 434
& 58.17 & 497 & 335.58 & 455 & 57.81 & 428 & 71.08 & 424 
& 57.33 & 434 & 59.81 & 420 \\
2 & $10^{-3}$ & $10^{-1}$ & 192.85 & 719 & 114.39 & 673 & 6.46 & 452
& 18.38 & 555 & 14.34 & 480 & 9.54 & 464 & 8.29 & 459 
& 9.13 & 488 & 7.14 & 455 \\
3 & $10^{-3}$ & $10^{-2}$ & 26.98 & 637 & 21.47 & 655 & 2.31 & 472
& 18.60 & 515 & 4.39 & 444 & 4.03 & 440 & 1.13 & 484 
& 3.47 & 478 & 1.43 & 436 \\
4 & $10^{-3}$ & $10^{-3}$ & 6.40 & 696 & 3.40 & 6.38 & 1.99 & 430
& 2.81 & 611 & 2.50 & 415 & 2.14 & 391 & 1.87 & 416 
& 1.99 & 440 & 1.82 & 427 \\
5 & $10^{-3}$ & $10^{-4}$ & 3.96 & 653 & 1.34 & 671 & 0.66 & 541
& 0.98 & 615 & 0.81 & 450 & 0.73 & 428 & 0.69 & 464 
& 0.72 & 443 & 0.70 & 435 \\

6 & $10^{-4}$ & $1$ & 499.59 & 664 & 61.73 & 710 & 59.29 & 430
& 65.17 & 559 & 408.85 & 431 & 62.96 & 459 & 79.11 & 422 
& 62.65 & 467 & 65.22 & 521 \\
7 & $10^{-4}$ & $10^{-1}$ & 194.57 & 654 & 114.54 & 677 & 6.35 & 511
& 19.82 & 555 & 10.21 & 448 & 10.00 & 442 & 9.14 & 433 
& 9.35 & 465 & 6.83 & 446 \\
8 & $10^{-4}$ & $10^{-2}$ & 24.49 & 757 & 20.56 & 674 & 1.67 & 593
& 16.81 & 603 & 4.10 & 456 & 4.27 & 431 & 1.31 & 440 
& 3.57 & 476 & 1.25 & 463 \\
9 & $10^{-4}$ & $10^{-3}$ & 5.26 & 660 & 4.76 & 657 & 1.73 & 446
& 2.33 & 5.85 & 2.04 & 425 & 1.74 & 448 & 1.63 & 433 
& 1.64 & 500 & 1.59 & 442 \\
10 & $10^{-4}$ & $10^{-4}$ & 4.86 & 6.49 & 1.16 & 605 & 0.28 & 420
& 0.67 & 511 & 0.31 & 415 & 0.28 & 412 & 0.28 & 408 
& 0.28 & 405 & 0.28 & 410 \\

11 & $10^{-3}$ & $1$ & 343.03 & 909 & --- & --- & --- & --- & 56.05 & 643 
& 104.16 & 528 & 54.51 & 509 & 65.60 & 557 & 53.98 & 536 & 52.87 & 519 \\
12 & $10^{-3}$ & $10^{-1}$ & 190.73 & 899 & --- & --- & --- & --- & 10.94 & 679 
& 9.44 & 572 & 8.90 & 525 & 8.17 & 516 & 8.56 & 534 & 7.03 & 472 \\
13 & $10^{-3}$ & $10^{-2}$ & 35.49 & 829 & --- & --- & --- & --- & 15.70 & 691 
& 3.38 & 497 & 3.27 & 499 & 1.07 & 513 & 2.82 & 540 & 1.09 & 542 \\
14 & $10^{-3}$ & $10^{-3}$ & 17.63 & 649 & --- & --- & --- & --- & 3.08 & 583
& 2.47 & 509 & 2.01 & 467 & 1.29 & 540 & 1.88 & 506 & 1.42 & 547 \\
15 & $10^{-3}$ & $10^{-4}$ & 10.11 & 903 & --- & --- & --- & --- & 0.97 & 689 
& 0.81 & 541 & 0.73 & 472 & 0.64 & 561 & 0.72 & 475 & 0.64 & 542 \\

16 & $10^{-4}$ & $1$ & 353.73 & 900 & --- & --- & --- & --- & 64.31 & 629 
& 336.19 & 510 & 61.04 & 556 & 70.63 & 532 & 61.26 & 509 & 59.89 & 522 \\
17 & $10^{-4}$ & $10^{-1}$ & 176.44 & 892 & --- & --- & --- & --- & 15.59 & 567 
& 7.67 & 498 & 7.79 & 539 & 6.99 & 540 & 8.37 & 477 & 6.20 & 439 \\
18 & $10^{-4}$ & $10^{-2}$ & 23.37 & 821 & --- & --- & --- & --- & 15.58 & 695 
& 2.42 & 566 & 3.46 & 491 & 1.06 & 521 & 3.02 & 529 & 1.25 & 463 \\
19 & $10^{-4}$ & $10^{-3}$ & 8.20 & 906 & --- & --- & --- & --- & 2.32 & 603 
& 1.92 & 525 & 1.66 & 481 & 1.18 & 548 & 1.57 & 530 & 1.42 & 492 \\
20 & $10^{-4}$ & $10^{-4}$ & 12.70 & 858 & --- & --- & --- & --- & 0.49 & 663 
& 0.29 & 549 & 0.28 & 504 & 0.27 & 564 & 0.28 & 517 & 0.27 & 539 \\
\hline
\end{tabular}
\end{center}
\end{table}
\end{landscape}
%%%%%%%%%%%%%%%

%%%%%%%%%%%%%%%%%%%%
\begin{figure}[htbp]
\centering
\subfloat[][$\lambda_1=10^{-3},~~ \lambda_2=10^{-2}$]
{\includegraphics[width=7.7cm]{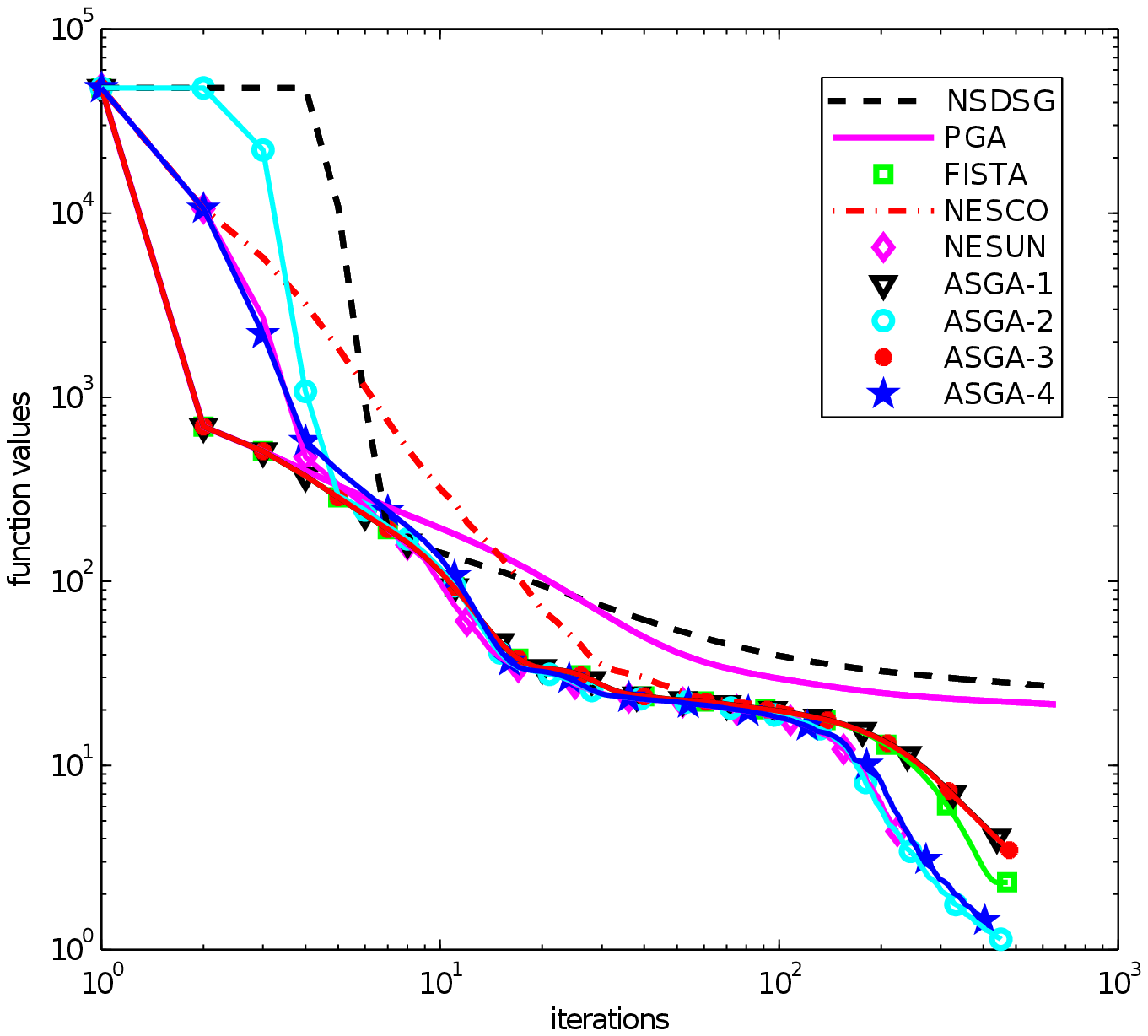}}%
\qquad
\subfloat[][$\lambda_1=10^{-3},~~ \lambda_2=10^{-3}$]
{\includegraphics[width=7.7cm]{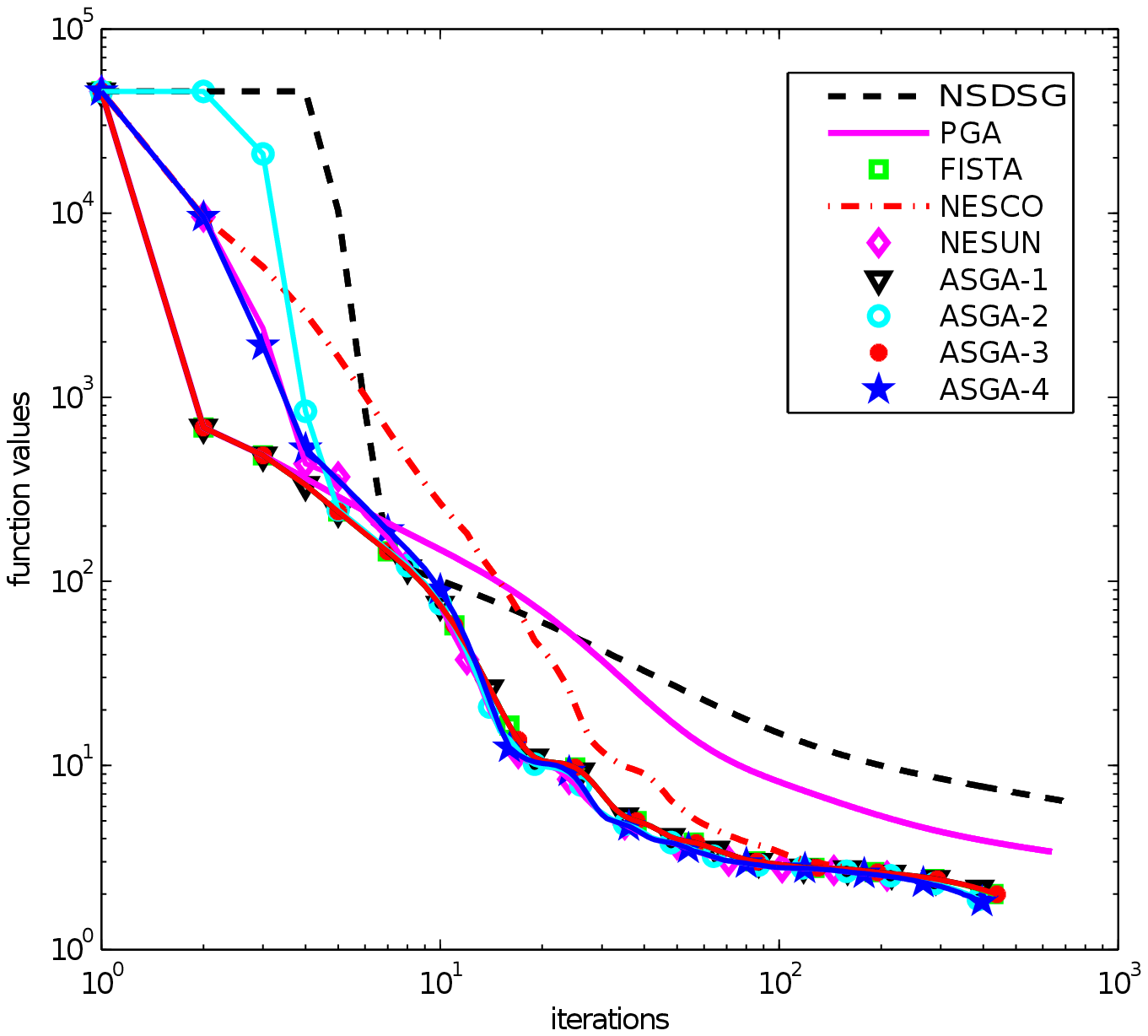}}

\caption{A comparison among NSDSG, PGA, FISTA, NESCO, NESUN, ASGA-1, ASGA-2,
ASGA-3, and ASGA-4 for solving elastic net minimization problems (\ref{e.elas1}): Subfigures (a) and (b) display comparisons of function
values versus iterations for $\lambda_1=10^{-3}, \lambda_2=10^{-2}$ and $\lambda_1=10^{-3}, \lambda_2=10^{-3}$, respectively. The algorithms stopped
after 20 seconds.}
\label{f.ll2l1}
\end{figure}
%%%%%%%%%%%%

The results of Table \ref{t.l22l1} shows that the optimal methods FISTA, NESCO,
NESUN, ASGA-1, ASGA-2, ASGA-3, and ASGA-4 outperforms NSDSG and PGA significantly
as confirmed by their complexity analyses. It can also be seen that in many
cases ASGA-2 and ASGA-4 performs better than NSDSG, PGA, FISTA, NESCO, NESUN,
ASGA-1, and ASGA-2; however, in several cases they are comparable with FISTA,
where FISTA is not generally applicable for constrained version
(\ref{e.elas2}). In addition, it is observable that ASGA-1 and ASGA-3 stay
reasonable for a wide range of regularization parameters in contrast to ASGA-2 and ASGA-4. We therefore draw your attention to the Subfigures (a) and (b) of
Figure \ref{f.ll2l1} which give the function values versus iterations for (\ref{e.elas1}) with two levels of regularization parameters 
$\lambda_1=10^{-3}, \lambda_2=10^{-2}$ and 
$\lambda_1=10^{-3}, \lambda_2=10^{-3}$.

%%%%%%%%%%%%%%%%%%%%%%%%%%%%%%%%%%%%%%%%%%%%%%%%%%%%%%%%%%%%%%%%%%%%%%%%%%%%%%%
\subsection{{\bf Support vector machine}}\label{s.svm}
Let us consider learning with support vector machines (SVM) leading to a convex
optimization problem with large data sets. In particular, we consider a  binary
classification, where the set of training data $(x_1,y_1),\dots,(x_m,y_m)$
with $x_i\in \mathbb{R}^n$ and $y_i\in \{-1, 1\}$, for $i=1, \dots, m$, are
given. The aim is to find a classification rule from the training data, so that when given a new input $x$, we can assign a class $y\in\{-1, 1\}$ to it.
As SVM uses a classification rule that decides the class of $x$ based on the
sign of $\langle x,w\rangle+w_0$, we need to choose the vector $w$ and the
scalar $w_0$. These may be determined by solving the penalized problem
\lbeq{e.svm}
\begin{array}{ll}
\min   &~~ \D\sum_{i=1}^m \left[ 1-y_i(\langle x_i, w\rangle +w_0)\right]_+ +\lambda \phi(w)\\
\mathrm{s.t.} &~~ w \in \mathbb{R}^n,~ w_0 \in \mathbb{R},
\end{array}
\eeq
where $[z]_+ = \max \{z, 0\}$, and $\phi$ can be $\|\cdot\|_1$ (SVML1R),
$\|\cdot\|_2^2$ (SVML22R), and $\frac{1}{2}\|\cdot\|_2^2+\|\cdot\|_1$
(SVML22L1R) (see, e.g., \cite{ShaS,ZhuRHT} and references therein). For 
$\langle x,w \rangle=w^Tx$, let us define 
\[
X:=\left(
\begin{array}{c}
y_1x_1^T\\
\vdots\\
y_m x_m^T
\end{array} 
\right)\in \mathbb{R}^{m\times n},~~~
A:=(X, y)\in \mathbb{R}^{m\times (n+1)},~~~ 
\wt w:=\left( 
\begin{array}{l}
w\\
w_0
\end{array} \right)\in \mathbb{R}^{n+1}.
\]

The problem (\ref{e.svm}) can be rewritten in the form
\lbeq{e.svm1}
\begin{array}{ll}
\min &~~\D \langle \mathbf{1}, \left[ \mathbf{1}-A \wt w \right]_+ \rangle+ \lambda\phi(w)\\
\mathrm{s.t.} &~~ \wt w \in \mathbb{R}^{n+1},
\end{array}
\eeq
where $\left[ \mathbf{1}-A \wt w \right]_+ = \sup \{\mathbf{1}-A \wt w,0\}$
and $\mathbf{1}\in \mathbb{R}^{m}$ is the vector of all ones. Typically $A$
is a dense matrix constructed by data points $x_i$ and $y_i$ for $i=1, \dots, m$. By setting 
$f(\wt w)=\langle \mathbf{1}, \left[ \mathbf{1}-A \wt w \right]_+\rangle$ and
$\psi(x)=\lambda\phi(w)$, it is clear that (\ref{e.svm1}) is of the form
(\ref{e.gfun}), where $f$ is nonsmooth and its corresponding subgradient at 
$\wt w$ is given by
\[
\nabla f(\wt w) = -A^T \delta, 
\]
with
\[
\forall i=1, \dots, m, ~~~ \delta_i := \left\{
\begin{array}{ll}
1 & ~~ \mathrm{if}~ A_{i:} \wt w<1,\\
0 & ~~ \mathrm{if}~ A_{i:} \wt w\geq 1,
\end{array}
\right.
\]
For all $w_1, w_2 \in \mathbb{R}^n$, we have
\[
\|\nabla f(w_1)-\nabla f(w_2)\|_2 = \|A^T (\delta_1-\delta_2)\|_2
\leq \|A^T\|_2 \|\delta_1-\delta_2\|_2 \leq \sqrt m \|A^T\|_2:= L_0,
\]
where $A_{i:}$ denotes the $i$th row of $A$, for $i=1,\dots,m$. Therefore,
$f$ satisfies (\ref{e.holder}) with $\nu=0$ and $L_\nu=L_0$.

Let us consider the problems SVML1R, SVML22R, and SVML22L1R for the leukemia
data given by {\sc Golub} et al. in \cite{Golub}, available at the website
\cite{Golub1}. This dataset comes from a study of gene expression in two types
of acute leukemias (acute myeloid leukemia (AML) and acute lymphoblastic
leukemia (ALL)) and it consists of 38 training data points and 34 test data points. We apply SVML1R, SVML22R, and SVML22L1R to the training data points ($q=38$ and $n=7129$) with six levels of regularization parameters for each of
SVML1R, SVML22R, and SVML22L1R. Since for SVML1R and SVML22L1R both $f$ and
$\psi$ are nonsmooth functions, the algorithms PGA, FISTA, and NESCO cannot be
applied to theses problems. Therefore, we only consider NSDSG, NESUN,
ASGA-1, ASGA-2, ASGA-3, and ASGA-4 for solving these 3 problems with six levels
of regularization parameters. In our implementation, the algorithms are stopped
after 3 seconds of the running time. The associated results are given in Table \ref{t.svm} and Figure \ref{f.svm}.

In spite of the fact that all the considered algorithms attain the complexity
$\mathcal{O}(\eps^{-2})$ for the problem (\ref{e.svm1}), the results of 
Table \ref{t.svm} show that NESUN, ASGA-1, ASGA-2, ASGA-3, and ASGA-4 outperform NSDSG significantly, for all three
problems (SVML1R, SVML22R, and SVML22L1R). For cases $\lambda \in \{10, 1\}$,
NESUN and ASGA-4 attain the better results than the others. For 
$\lambda \in \{10^{-1}, 10^{-2}, 10^{-3},10^{-4}\}$ and for all three problems,
NESUN, ASGA-2, and ASGA-4 perform comparable but better than ASGA-1 and ASGA-3. 
However, ASGA-2 outperforms NESUN and ASGA-4 in the later case. We display
the function values versus iterations of the considered algorithms in 
Subfigures (a) and (b) of Figure \ref{f.svm} for SVML22L1R with $\lambda=1$
and $\lambda=10^{-1}$, respectively. In Subfigure (a), NESUN and ASGA-4
outperform the others, while in Subfigure (b) ASGA-2 possesses the best
result.

%%%%%%%%%%%%%%%%%
\begin{figure}[!htbp]
\centering
\subfloat[][SVML22L1R,~~ $\lambda=1$]
{\includegraphics[width=7.7cm]{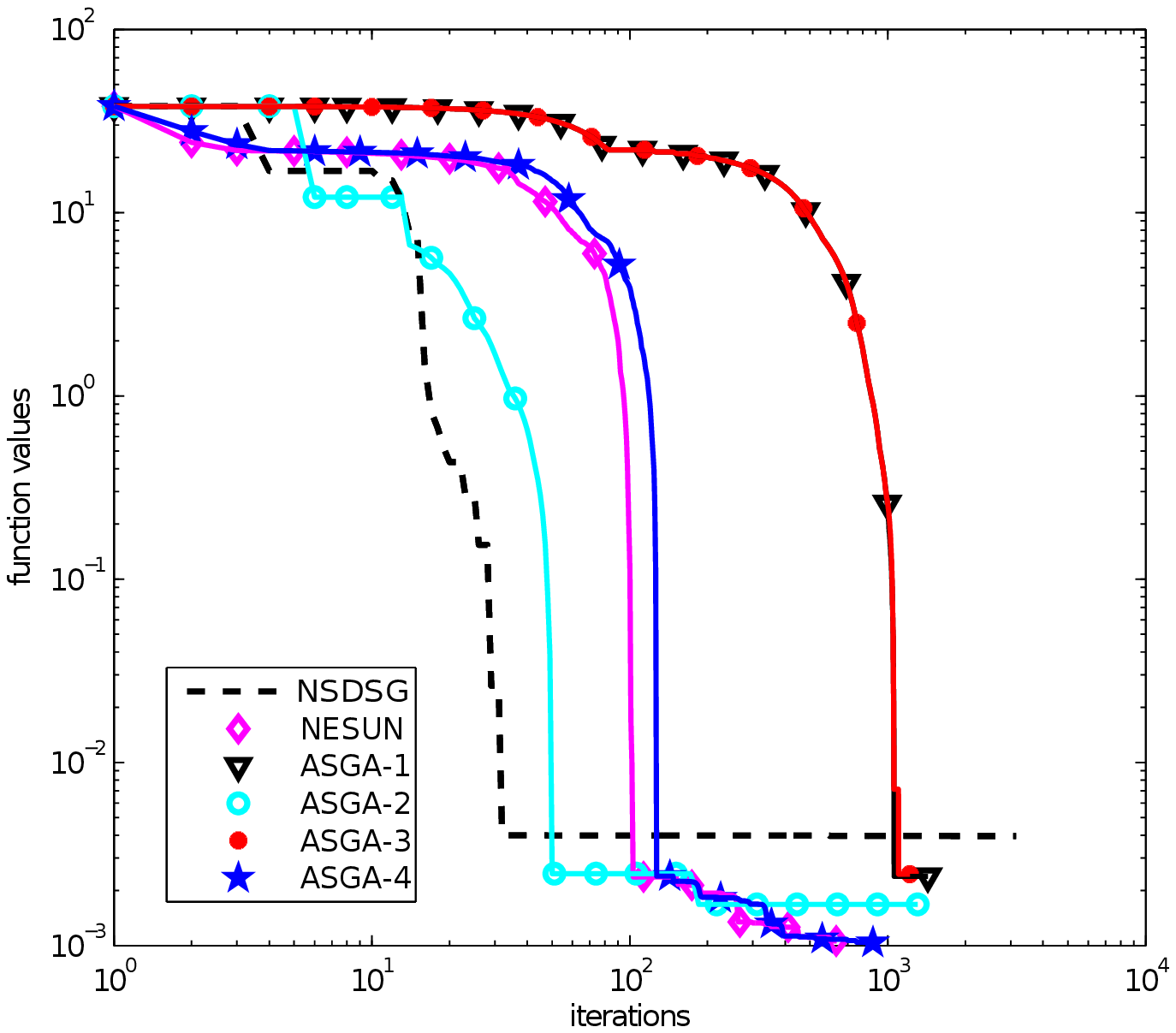}}%
\qquad
\subfloat[][SVML22L1R,~~ $\lambda=10^{-1}$]
{\includegraphics[width=7.7cm]{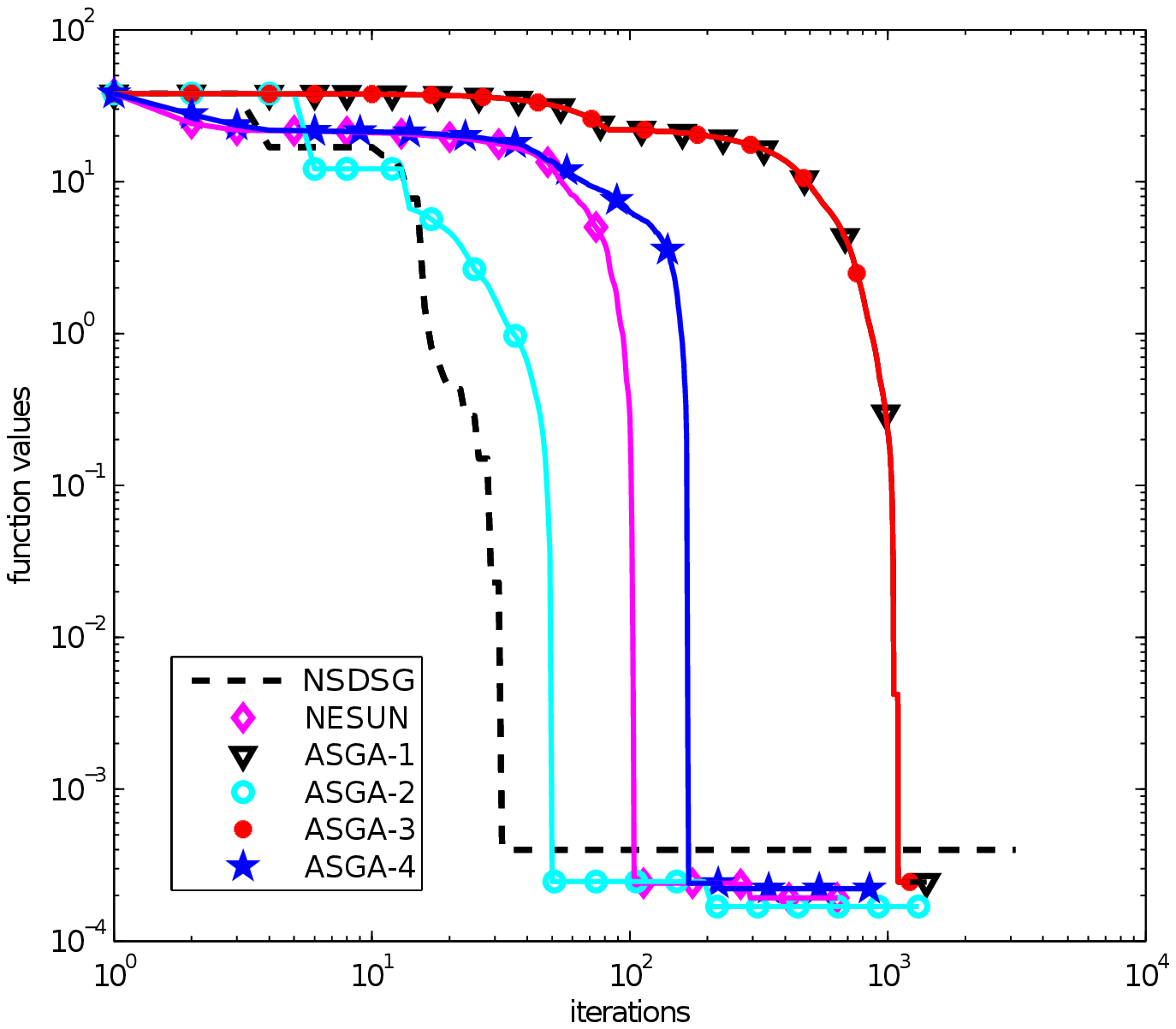}}%

\caption{A comparison among NSDSG, NESUN, ASGA-1, ASGA-2, ASGA-3, and ASGA-4
for a binary classification with linear support vector machines (SVML22L1R)
with $\lambda=1$ and $\lambda=10^{-1}$. The algorithms were stopped after 3
seconds.}
\label{f.svm}
\end{figure}
%%%%%%%%%%%%

%%%%%%%%%%%%%%%%%%%
\begin{landscape}
\begin{table}[htbp]
\caption{Numerical results of NSDSG, NESUN, ASGA-1, ASGA-2, ASGA-3, and ASGA-4 for the binary classification with linear support vector machines (\ref{e.svm1}). The algorithms were stopped after 3 seconds of the running time. $f_b$ and $N_f$ denote the best function value and the number of function
evaluations. }
\label{t.svm}
\begin{center}\footnotesize
\renewcommand{\arraystretch}{2}
\begin{tabular}{llllllllllllll}
\hline
\multicolumn{1}{l}{Prob. name} \hspace{5mm} 
&\multicolumn{1}{l}{Reg. par.} \hspace{5mm}
&\multicolumn{1}{l}{NSDSG}    &\multicolumn{1}{l}{} \hspace{2mm} 
&\multicolumn{1}{l}{NESUN}    &\multicolumn{1}{l}{} \hspace{2mm}       
&\multicolumn{1}{l}{ASGA-1}    &\multicolumn{1}{l}{} \hspace{2mm}  
&\multicolumn{1}{l}{ASGA-2}    &\multicolumn{1}{l}{} \hspace{2mm}   
&\multicolumn{1}{l}{ASGA-3}    &\multicolumn{1}{l}{} \hspace{2mm}   
&\multicolumn{1}{l}{ASGA-4}    &\multicolumn{1}{l}{}\\ 
\cmidrule(lr){3-4}\cmidrule(lr){5-6} \cmidrule(lr){7-8} 
\cmidrule(lr){9-10}\cmidrule(lr){11-12}\cmidrule(lr){13-14} 
\multicolumn{1}{l}{}   & \multicolumn{1}{l}{}        
&\multicolumn{1}{l}{$f_b$} &\multicolumn{1}{l}{$N_f$} \hspace{2mm}
&\multicolumn{1}{l}{$f_b$} &\multicolumn{1}{l}{$N_f$} \hspace{2mm}
&\multicolumn{1}{l}{$f_b$} &\multicolumn{1}{l}{$N_f$} \hspace{2mm}
&\multicolumn{1}{l}{$f_b$} &\multicolumn{1}{l}{$N_f$} \hspace{2mm}
&\multicolumn{1}{l}{$f_b$} &\multicolumn{1}{l}{$N_f$} \hspace{2mm}
&\multicolumn{1}{l}{$f_b$} &\multicolumn{1}{l}{$N_f$}\\ 
\hline
SVML1R    & $\lambda=10$ & 3.63 $\times 10^{-2}$ & 3641 & 9.97$\times 10^{-3}$ & 1482 & 2.43$\times 10^{-2}$ & 1682 & 1.51$\times 10^{-2}$ & 1524 & 
2.37$\times 10^{-2}$ & 1247 & 1.21$\times 10^{-2}$ & 1239 \\ 
SVML1R    & $\lambda=1$ & 3.95$\times 10^{-3}$ & 3389 & 1.07$\times 10^{-3}$ 
& 1347 & 2.38$\times 10^{-3}$ & 1547 & 1.68$\times 10^{-3}$ & 1483 & 
2.45$\times 10^{-3}$ & 1177 & 1.13$\times 10^{-3}$ & 1179 \\
SVML1R    & $\lambda=10^{-1}$ & 3.99$\times 10^{-4}$ & 3401 & 
1.92$\times 10^{-4}$ & 1472 & 2.45$\times 10^{-4}$ & 1498 & 
1.68$\times 10^{-4}$ & 1439 & 2.45$\times 10^{-4}$ & 1223 & 
2.21$\times 10^{-4}$ & 1302 \\
SVML1R    & $\lambda=10^{-2}$ & 3.99$\times 10^{-5}$ & 3408 & 
1.66$\times 10^{-5}$ & 1357 & 2.38$\times 10^{-5}$ & 1543 & 
1.68$\times 10^{-5}$ & 1385 & 2.38$\times 10^{-5}$ & 1295 & 
1.81$\times 10^{-5}$ & 1264 \\
SVML1R    & $\lambda=10^{-3}$ & 3.99$\times 10^{-6}$ & 3326 & 
1.94$\times 10^{-6}$ & 1353 & 2.45$\times 10^{-6}$ & 1530 & 
1.67$\times 10^{-6}$ & 1404 & 2.45$\times 10^{-6}$ & 1304 & 
1.77$\times 10^{-6}$ & 1254 \\
SVML1R    & $\lambda=10^{-4}$ & 3.99$\times 10^{-7}$ & 3362 & 
1.84$\times 10^{-7}$ & 1396 & 2.45$\times 10^{-7}$ & 1542 & 
1.66$\times 10^{-7}$ & 1475 & 2.45$\times 10^{-7}$ & 1263 & 
1.81$\times 10^{-7}$ & 1269 \\

SVML22R    & $\lambda=10$ & 7.96$\times 10^{-8}$ & 3442 & 2.75$\times 10^{-8}$
& 1454 & 2.91$\times 10^{-8}$ & 1598 & 3.01$\times 10^{-8}$ & 1490 & 
2.91$\times 10^{-8}$ & 1236 & 2.76$\times 10^{-8}$ & 1276 \\ 
SVML22R    & $\lambda=1$ &  7.96$\times 10^{-9}$ & 3445 & 2.75$\times 10^{-9}$
& 1454 & 2.91$\times 10^{-9}$ & 1609 & 3.01$\times 10^{-9}$ & 1506 & 
2.91$\times 10^{-9}$ & 1384 & 2.76$\times 10^{-9}$ & 1398 \\
SVML22R    & $\lambda=10^{-1}$ &  7.96$\times 10^{-10}$ & 3438 & 2.75$\times 10^{-10}$
& 1417 & 2.91$\times 10^{-10}$ & 1565 & 3.01$\times 10^{-10}$ & 1479 & 
2.91$\times 10^{-10}$ & 1373 & 2.76$\times 10^{-10}$ & 1386 \\
SVML22R    & $\lambda=10^{-2}$ &  7.96$\times 10^{-11}$ & 3407 & 2.75$\times 10^{-11}$
& 1434 & 2.91$\times 10^{-11}$ & 1512 & 3.01$\times 10^{-11}$ & 1452 & 
2.91$\times 10^{-11}$ & 1307 & 2.76$\times 10^{-11}$ & 1315 \\
SVML22R    & $\lambda=10^{-12}$ &  7.96$\times 10^{-12}$ & 3498 & 2.75$\times 10^{-12}$
& 1443 & 2.91$\times 10^{-12}$ & 1590 & 3.01$\times 10^{-12}$ & 1564 & 
2.91$\times 10^{-12}$ & 1328 & 2.76$\times 10^{-12}$ & 1257 \\
SVML22R    & $\lambda=10^{-13}$ &  7.96$\times 10^{-13}$ & 3536 & 2.75$\times 10^{-13}$
& 1373 & 2.91$\times 10^{-13}$ & 1521 & 3.01$\times 10^{-13}$ & 1479 & 
2.91$\times 10^{-13}$ & 1315 & 2.76$\times 10^{-13}$ & 1343 \\

SVML22L1R    & $\lambda=10$ & 3.65$\times 10^{-2}$ & 3179 & 
1.01$\times 10^{-2}$ & 1247 & 2.43$\times 10^{-2}$ & 1393
& 1.51$\times 10^{-2}$ & 1192 & 2.37$\times 10^{-2}$ & 1156 & 
1.23$\times 10^{-2}$ & 1166 \\ 
SVML22L1R    & $\lambda=1$ & 3.95$\times 10^{-3}$ & 3145 & 
1.05$\times 10^{-3}$ & 1286 & 2.38$\times 10^{-3}$ & 1431 & 
1.68$\times 10^{-3}$ & 1395 & 2.45$\times 10^{-3}$ & 1219 & 
1.03$\times 10^{-3}$ & 1207 \\
SVML22L1R    & $\lambda=10^{-1}$ & 3.99$\times 10^{-4}$ & 3127 & 
1.92$\times 10^{-4}$ & 1285 & 2.45$\times 10^{-4}$ & 1414 & 
1.68$\times 10^{-4}$ & 1403 & 2.45$\times 10^{-4}$ & 1219 
& 2.21$\times 10^{-4}$ & 1172 \\
SVML22L1R    & $\lambda=10^{-2}$ & 3.99$\times 10^{-5}$ & 3180 & 
1.61$\times 10^{-5}$ & 1273 & 2.38$\times 10^{-5}$ & 1425 & 
1.68$\times 10^{-5}$ & 1364 & 2.38$\times 10^{-5}$ & 1171 & 
1.81$\times 10^{-5}$ & 1111 \\
SVML22L1R    & $\lambda=10^{-3}$ & 3.99$\times 10^{-6}$ & 3156 & 
1.94$\times 10^{-6}$ & 1325 & 2.45$\times 10^{-6}$ & 1472 & 
1.67$\times 10^{-6}$ & 1382 & 2.45$\times 10^{-6}$ & 1144 & 
1.77$\times 10^{-6}$ & 1248 \\
SVML22L1R    & $\lambda=10^{-4}$ & 3.99$\times 10^{-7}$ & 3180 & 
1.84$\times 10^{-7}$ & 1287 & 2.45$\times 10^{-7}$ & 1420 & 
1.66$\times 10^{-7}$ & 1295 & 2.45$\times 10^{-7}$ & 1174 & 
1.82$\times 10^{-7}$ & 1197 \\
\hline
\end{tabular}
\end{center}
\end{table}
\end{landscape}
%%%%%%%%%%%

%%%%%%%%%%%%%%%%%%%%%%%%%%%%%%%%%%%%%%%%%%%%%%%%%%%%%%%%%%%%%%%%%%%%%%%%%%%%%%%
%%%%%%%%%%%%%%%%%%%%%%%%%%%%%%%%%%%%%%%%%%%%%%%%%%%%%%%%%%%%%%%%%%%%%%%%%%%%%%%
\section{Final remarks} 
In this paper, we propose several novel (sub)gradient methods for solving
large-scale convex composite minimization. More precisely, we give two 
estimation sequences approximating the objective function with some local and
global information of the objective. For each of the estimation sequences, we
give two iterative schemes attaining the optimal complexities for smooth, nonsmooth, weakly smooth, and smooth strongly convex problems. These schemes
are optimal up to a logarithmic factors for nonsmooth strongly convex problems,
and for weakly smooth strongly convex problems they attain a much better 
complexity than the complexity for weakly smooth convex problems. For each
estimation sequence, the first scheme needs to know about the level of
smoothness and the H\"older constant, while the second one is parameter-free (except for the strong convexity parameter which we set zero if it is not
available) at the price of applying a backtracking line search. We then
consider solutions of the auxiliary problems appearing in these four
schemes and study the important cases appearing in applications that can be solved efficiently either in a closed form or by a simple iterative scheme. 
Considering some applicationsin the fields of sparse optimization and machine
learning, we report numerical results showing the encouraging behavior of the
proposed schemes.\bigskip\\
%%%%%%%%%%%%%%%%%%%%%%%%%%%%%%%%%%%%%%%%%%%%%%%%%%%%%%%%%%%%%%%%%%%%%%%%%%%%%%%
%%%%%%%%%%%%%%%%%%%%%%%%%%%%%%%%%%%%%%%%%%%%%%%%%%%%%%%%%%%%%%%%%%%%%%%%%%%%%%%
{\bf Acknowledgement.} I would like to thank Arnold Neumaier for his useful comments on this paper.

%%%%%%%%%%%%%%%%%%%%%%%%%%%%%%%%%%%%%%%%%%%%%%%%%%%%%%%%%%%%%%%%%%%%%%%%%%%%%%%
%%%%%%%%%%%%%%%%%%%%%%%%%%%%%%%%%%%%%%%%%%%%%%%%%%%%%%%%%%%%%%%%%%%%%%%%%%%%%%%

\end{document}